\newtheorem{theorem}{Theorem}[section]
\newtheorem{theorem*}{Theorem}
\newtheorem{corollary}[theorem]{Corollary}
\newtheorem{lemma}[theorem]{Lemma}
\newtheorem{proposition}[theorem]{Proposition}
\newtheorem{question}{Question}
\theoremstyle{definition}
\newtheorem{definition}[theorem]{Definition}
\newtheorem{remark}[theorem]{Remark}
\newtheorem{fact}[theorem]{Fact}
\newtheorem{example}[theorem]{Example}
\theoremstyle{remark}
\newtheorem{observation}[theorem]{Observation}
\newtheorem{step}{Step}
\newtheorem*{claim*}{Claim}
\newtheorem{claim}{Claim}
\newcommand{\abs}[1]{\left|#1\right|}
\newcommand{\norm}[1]{\left\lVert #1 \right\rVert}
\newcommand{\bbra}[1]{ { \left\{ #1 \right\} } } 
\newcommand{\bra}[1]{ \left( #1 \right) }
\newcommand{\nobar}[1]{#1}
\newcommand{\cP}{\mathcal{P}}
\newcommand{\cC}{\mathscr{C}}
\newcommand{\cD}{\mathscr{D}}
\newcommand{\cA}{\mathcal{A}}
\newcommand{\cF}{\mathscr{F}}
\newcommand{\cFe}{\cF_{\emptyset}}
\newcommand{\fpa}[1]{\left\lVert #1 \right\rVert_{\mathbb{R}/\mathbb{Z}}}
\renewcommand{\a}{\alpha}
\renewcommand{\b}{\beta}
\renewcommand{\c}{\gamma}
\newcommand{\e}{\varepsilon}
\newcommand{\Borel}{\mathscr{B}}
\newcommand{\NN}{\mathbb{N}}
\newcommand{\QQ}{\mathbb{Q}}
\newcommand{\ZZ}{\mathbb{Z}}
\newcommand{\RR}{\mathbb{R}}
\newcommand{\TT}{\mathbb{T}}
\newcommand{\set}[2]{\left\{ #1 \ \middle| \ #2 \right\} }
\newcommand{\G}{\Gamma}
\newcommand{\bchecked}[1] {}
\newcommand{\parbreak}[1]{
\begin{center}
***
\end{center}
}
\newcommand{\nilbohr}[1]{$\mathrm{Nil}_{#1}\!\text{--}\mathrm{Bohr}$}
\newcommand{\nilbohrz}[1]{$\mathrm{Nil}_{#1}\!\text{--}\mathrm{Bohr}_0$}
\newcommand{\bohrz}{$\mathrm{Bohr}_0$}
\newcommand{\bohr}{$\mathrm{Bohr}$}
\newcommand{\ip}{$\mathrm{IP}$}
\newcommand{\ipd}{$\mathrm{IP}^*$}
\newcommand{\sg}[1]{$\mathrm{SG}_{#1}$}
\newcommand{\sgd}[1]{$\mathrm{SG}_{#1}^*$}
\newcommand{\SG}{\operatorname{SG}}
\newcommand{\FS}{\operatorname{FS}}
\newcommand{\FU}{\operatorname{FU}}
\newcommand{\IP}{\operatorname{IP}}
\newcommand{\HK}{\operatorname{HK}}
\newcommand{\poly}{\operatorname{poly}}
\newcommand{\codim}{\operatorname{codim}}
\newcommand{\diam}{\operatorname{diam}}
\newcommand{\cl}{\operatorname{cl}}
\newcommand{\cS}{\mathcal{S}}
\renewcommand{\subset}{\subseteq}
\renewcommand{\supset}{\supseteq}
\newcommand{\plim}[2]{#1\text{--}\lim_{#2}}
\newcommand{\bb}{\mathbf}	
\newcounter{tmp}
\begin{document}

\author[J. Konieczny]{Jakub Konieczny
}
\address{Mathematical Institute, 
University of Oxford\\
Andrew Wiles Building,
Radcliffe Observatory Quarter\\
Woodstock Road,
Oxford,
OX2 6GG}
\email{jakub.konieczny@gmail.com}

\begin{abstract}
In this paper we study the relation between two notions of largeness that apply to a set of positive integers, namely \nilbohrz {d}\ and \sgd {k}, as introduced by Host and Kra \cite{HostKra2009}. We prove that any \nilbohrz {d}\ set is necessarily \sgd {k} where ${k}$ is effectively bounded in terms of $d$. This partially resolves a conjecture of Host and Kra.  
\end{abstract}

\title[Nil--Bohr sets and combinatorics]{Combinatorial properties of Nil--Bohr sets}

\maketitle 

\section{Introduction}\label{sec:INT} 

Among the basic problems in additive combinatorics is the study of various notions of \emph{largeness} which may apply to a set of integers. In this paper we are specifically interested in one such notion, namely that of being a \nilbohr {d} set, or a set of recurrence times for  a $d$-step nilrotation (see Section \ref{sec:INT:NilBohr}).

The study of these sets was pioneered by Host and Kra \cite{HostKra2009}, with later developments due to Huang, Shao and Ye \cite{HuangShaoYe2014}, Tu \cite{Siming}, and Bergelson and Leibman \cite{BergelsonLeibman2016}. In \cite{HostKra2009}, it was realised that \nilbohr {d} sets bear a striking relation to a purely combinatorial class of \sgd {k} sets (see Section \ref{sec:INT:SG}). Namely, it was shown that a \sgd {d} set is (strongly) piecewise-\nilbohr {d}. Here, we prove the reverse implication, although in a weaker form.

Even though a proper motivation for our results requires more context, we are able to express some of them in relatively basic terms. Our first result is the following.

\begingroup
\setcounter{tmp}{\value{theorem}}
\setcounter{theorem}{0} 
\renewcommand\thetheorem{\Alph{theorem}}

\begin{theorem}\label{thm:A}
	Fix an polynomial $p \in \RR[x]$ of degree $d$ with $p(0) = 0$, and a sequence $\bra{n_i}_{i=1}^\infty$ of positive integers. Then, for any $\e > 0$, there exists a finite, non-empty set $\a \subset \NN$, with gaps bounded by $d$, such that $\fpa{p\bra{\sum_{i \in \a} n_i}} \leq \e$.  
\end{theorem}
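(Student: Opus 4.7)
The plan is to proceed by induction on the degree $d$, strengthening the hypothesis to handle finitely many polynomials of degree $\leq d$ simultaneously (the simultaneous version follows from the single-polynomial version in the base case by Dirichlet approximation in a product torus).

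\emph{Base case $d = 1$.} Write $p(x) = \a x$ for some $\a \in \RR$ (reusing the macro for $\alpha$). Set $M = \lceil 1/\e \rceil$ and consider the values $\a S_k \in \RR/\ZZ$, where $S_k = \sum_{i=1}^k n_i$, for $k = 0, 1, \ldots, M$. By the pigeonhole principle, two such values lie within $\e$ of each other, say at indices $a < b$. The interval $\bbra{a+1, \ldots, b}$ has gaps equal to $1 \leq d$ and satisfies $\fpa{p(\sum_{i \in \{a+1,\ldots,b\}} n_i)} = \fpa{\a(S_b - S_a)} \leq \e$.

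\emph{Inductive step.} Assume the strengthened claim for polynomials of degree $\leq d-1$. The key identity is that, for any integer $h$, the discrete difference $\Delta_h p(x) := p(x+h) - p(x)$ is a polynomial of degree $d-1$ in $x$. I would attempt to construct $\a$ as a union of $d$ consecutive blocks $\a = \a_1 \cup \a_2 \cup \cdots \cup \a_d$, each block being an interval (internal gap $1$) and the bridging gaps between successive blocks all being at most $d$, so that the overall gap of $\a$ stays $\leq d$. Setting $u_j = \sum_{i \in \a_j} n_i$, the identity $p(u_1 + \cdots + u_d) = p(u_2 + \cdots + u_d) + \Delta_{u_1} p(u_2 + \cdots + u_d)$ lets me peel off one block at a time: each peeling introduces a polynomial of degree $d-1$ (depending on the peeled $u_j$ as a parameter) to be controlled via the strengthened inductive hypothesis. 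After $d$ peelings, the innermost iterated difference $\Delta_{u_1} \cdots \Delta_{u_d} p$ becomes the constant $d!\,a_d\,u_1 u_2 \cdots u_d$ (where $a_d$ is the leading coefficient), which can be made small in $\RR/\ZZ$ by the base-case pigeonhole argument applied to an appropriate choice of the last block.

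The main obstacle is the bookkeeping around this iterative reduction. First, the choices of the $\a_j$'s must be made in a consistent order, since the degree-$(d-1)$ polynomial produced at step $j$ has coefficients depending on the later $u_{j+1}, \ldots, u_d$; one likely has to fix the innermost blocks first and move outward, or, reversely, pick outermost first and apply the inductive hypothesis in a parameter-uniform form. Second, the strengthened inductive hypothesis must be robust enough to cover the whole family of difference polynomials $\Delta_{u_1} \cdots \Delta_{u_j} p$ as the $u_j$ range over the allowed subsums — this is why treating finite families simultaneously is essential. Third, one must verify that the bridging gaps between the $d$ blocks can always be kept $\leq d$ no matter how deep in the sequence $(n_i)$ the inductive hypothesis forces us to look; this should follow because the hypothesis applies to any tail of the sequence and so the blocks can always be shifted to be adjacent up to the permitted gap. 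An alternative route, perhaps cleaner, would invoke equidistribution of polynomial orbits on nilmanifolds of step $d$, exploiting that the target set $\bbra{m : \fpa{p(m)} \leq \e}$ is a \nilbohr{d}\ set and hence (by the main theorem of the paper) \sgd{d}.
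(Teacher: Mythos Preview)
Your inductive scheme has a genuine gap. The peeling identity
\[
p(u_1+\cdots+u_d)=p(u_2+\cdots+u_d)+\Delta_{u_1}p(u_2+\cdots+u_d)
\]
does produce a degree $d-1$ correction, but the leading term $p(u_2+\cdots+u_d)$ is still degree $d$, so you have not reduced the problem. Iterating all the way down (as you suggest) gives
\[
p(u_1+\cdots+u_d)=\sum_{j=1}^{d}\Delta_{u_j}p(u_{j+1}+\cdots+u_d),
\]
and the last summand is $\Delta_{u_d}p(0)=p(u_d)$: this is exactly the original degree-$d$ problem, now restricted to a single interval $\a_d\in\cS_1$, which is strictly harder. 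There is no way to choose the $u_j$'s so that every summand is governed by a degree $\leq d-1$ hypothesis; some term always carries the full degree. (The ``constant'' $d!\,a_d\,u_1\cdots u_d$ you mention is constant in the evaluation point, not in the $u_j$'s, and making it small mod $1$ is itself a nontrivial multilinear problem.) The PET-style differencing argument is the right intuition for \ipd\ recurrence, but it does not respect the tight gap bound $d$; any honest implementation of it loses a factor at each step.

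Your alternative route via the main theorem also fails to prove Theorem~\ref{thm:A} as stated: the set $\{m:\fpa{p(m)}\le\e\}$ is \nilbohrz{d}, so Theorem~\ref{thm:B} only yields $\a\in\cS_k$ with $k=\binom{d+1}{2}$, not $k=d$.

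The paper's argument is of a completely different nature. One views $\a\mapsto p(n_\a)$ as a degree-$d$ polynomial $\cFe\to\TT$, passes to a \emph{stable} asymptotic $\cS_d$-subsequence (via ultrafilter limits), and uses the resulting structure theorem (Proposition~\ref{cor:structure-of-stable-poly(F->T)}) to write $f$ in terms of finitely many periodic coefficients $a_\gamma$. One then runs a perturbation argument: the set $\Sigma$ of values reachable along ``generic'' $\a\in\cS_d$ is shown to be a coset of a closed subgroup $\Delta\subset\TT$, and by exhibiting explicit local perturbations of $\a$ one shows $\Delta$ contains every generator of $\Sigma$, whence $0\in\Sigma$. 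The gap bound $d$ enters precisely in constructing, for each coefficient $a_\gamma$ with $\abs{\gamma}\le d$, a pattern $\pi\subset[3k]\setminus\gamma$ that still lies in $\cS_k$; this is where $k=d$ is exactly sufficient.
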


 Above, $\fpa{t} = \min_{n \in \ZZ} \abs{t-x}$ denotes the distance from the closest integer. From now on, let us denote by $\cF$ the family of all finite non-empty subsets of $\NN = \{1,2,\dots\}$. It is customary to denote elements of $\cF$ by lowercase Greek letters $\a, \b, \dots$. Bootstrapping (a slight modification of) the above result, we obtain a recurrence statement for nilrotations. For an set $\a = \{i_1,i_2,\dots,i_r\}$, where $i_1 < i_2 < \dots < i_r$, the \emph{gaps} of $\a$ are the integers $i_2 - i_1, i_3-i_2,\dots,i_{r}-i_{r-1}$. 

\begin{theorem}\label{thm:B}
	Let $G$ be a $d$-step nilpotent Lie group and let $\Gamma < G$ be a cocompact, discrete subgroup. Fix $g \in G$, an open neighbourhood $e \Gamma \in U \subset G/\Gamma$, as well as a sequence $\bra{n_i}_{i=1}^\infty$ of positive integers.
	
	Then, there exists $\a \in \cF$ with gaps bounded by $d' = 4d$, such that $g^{\sum_{i \in \a}\! n_i}\Gamma \in U $.
\end{theorem}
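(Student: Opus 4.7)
The plan is to induct on the nilpotency class $d$ of $G$, combining the inductive hypothesis on $(d-1)$-step quotients with an application of Theorem~A (or its ``slight modification'' referenced in the introduction) to the top layer of the lower central series at each step.

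For the base case $d=1$, $G$ is abelian and $G/\G \cong \TT^D$, with $n \mapsto g^n\G$ a linear orbit; a pigeonhole argument on partial sums in $\TT^D$ produces an interval $\a \subset \NN$ (so with gaps $\le 1 = \binom{2}{2}$) that places $g^{\sum_{i \in \a} n_i}$ in $U\G$.

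For the inductive step ($d\ge 2$), let $G_d$ denote the (central) last nontrivial term of the lower central series, write $\bar G := G/G_d$ and $\bar\G := \pi(\G)$ for the projection $\pi:G\to\bar G$, and pick $V\ni e\G$ small with $V\cdot V\subset U$. Applying the inductive hypothesis on the $(d-1)$-step nilmanifold $\bar G/\bar\G$ to $\bar g := \pi(g)$ and $\pi(V)$ yields $\a_0 \in \cF$ with gaps $\le \binom{d}{2}$ such that $g^{N_0} = v_0 u_0 \c_0$ with $v_0 \in V$, $u_0 \in G_d$, $\c_0 \in \G$, where $N_0 := \sum_{i \in \a_0} n_i$. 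Setting $\tilde g := \c_0 g \c_0^{-1}$, the centrality of $G_d$ in $G$ gives $g^{N_0+M}\G = v_0 \tilde g^M u_0 \G$ for every $M$, so it suffices to find $\b \in \cF$ with $\min \b > \max\a_0$, internal gaps $\le d$, and inter-block gap from $\a_0$ also $\le d$, such that $\tilde g^M u_0 \in V\G$ for $M := \sum_{i \in \b} n_i$. Working in a Mal'cev basis adapted to the filtration, multiplication by $u_0\in G_d$ only translates the top-weight Mal'cev coordinates of $\tilde g^M$; the condition on $\b$ thereby decomposes into a degree-$(<d)$ constraint on the lower-weight coordinates (polynomials in $M$ of degree $<d$ vanishing at $0$, to be driven near $\ZZ$) and a degree-$d$ constraint on the top-weight coordinates (polynomials of degree $d$ hitting a prescribed target in $\RR/\ZZ$). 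A joint vector-valued form of Theorem~A with gap bound $d$, suitably extended to handle nonzero targets, yields such a $\b$; concatenating gives $\a := \a_0 \cup \b$ with overall gaps bounded by $\binom{d}{2} + d = \binom{d+1}{2}$.

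The main obstacles are: (i) securing the joint vector-valued extension of Theorem~A with sharp gap bound $d$ (independent of the number of polynomials, i.e., of $\dim G$) and accommodating possibly nonzero targets in $\RR/\ZZ$ --- this is very likely the ``slight modification'' of the introduction, reducible to the scalar Theorem~A by a variable shift plus careful bookkeeping of the resulting constant term; and (ii) controlling the placement of $\b$ so that its leftmost element lies within $d$ of $\max\a_0$, i.e., a prescribed-starting-point strengthening of Theorem~A, which should follow from the scalar proof by translating the input sequence $(n_i)$.
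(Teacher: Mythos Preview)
Your inductive scheme is in the right spirit --- the paper also peels off one layer of the filtration at a time --- but the concatenation strategy does not close, and the two obstacles you flag at the end are fatal rather than bookkeeping.

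The decisive issue is obstacle (i). Theorem~\ref{thm:A} does \emph{not} extend to nonzero targets: already for the degree-$1$ polynomial $p(x)=x/2$, the values $p(n_\a)\bmod 1$ lie in $\{0,\tfrac12\}$ for every $\a$, so no $\a$ brings $p(n_\a)$ near $\tfrac14$. Your suggested ``variable shift'' cannot repair this, since shifting $p\mapsto p(\cdot+c)$ moves the constant term, not the target. In your setup the target is the $G_d$-coordinate of $u_0$, and you give no argument that this particular target is hittable by the top-weight polynomial of $\tilde g^M$. Any such argument would have to exploit the joint origin of $u_0$ and $\tilde g$ in $g^{N_0}$, and would almost certainly require not a single $\a_0$ but a \emph{supply} of them producing varying $u_0$ --- strictly more than the inductive hypothesis (Theorem~\ref{thm:B} itself) provides. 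Obstacle (ii) is a second symptom of the same disease: Theorem~\ref{thm:A} gives no control on $\min\a$, and translating the input sequence only shifts the problem without bounding the start of the new $\b$.

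This is exactly the deficiency the paper's argument is designed around. Rather than inducting on Theorem~\ref{thm:B}, the paper proves the stronger Theorem~\ref{thm:main-robust}: any $\bar f\in\poly(\cFe\to G_\bullet/\Gamma)$ with $\bar f(\emptyset)=e\Gamma$ has an entire \emph{asymptotic $\cS_l$-subsequence} identically equal to $e\Gamma$, where $l=k-\binom{d+1}{2}$. Carrying a full subsequence through the induction (Proposition~\ref{prop:main-inductive-step}, reduced via Proposition~\ref{prop:main-abelian} to the abelian model) means that after killing the torus quotient one lands inside $G_r\Gamma$ for \emph{every} $\a$ in the subsequence. The next step is therefore again a homogeneous problem with target $e\Gamma$; no inhomogeneous version of Theorem~\ref{thm:A} is ever needed, and the placement issue disappears because the subsequence formalism handles it automatically.
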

\endgroup
\setcounter{theorem}{\thetmp} 

The bound $d' = 4d$ results from an inductive argument. By a marginally more careful computation, this could be improved to $d' = 3d + \log_2{d} + O(1)$, but we sacrifice this inconsequential improvement for the sake of readability. The optimal value is believed to be $d' = d$, but our argument does not yield this conclusion.

Finally, we point out that the recent paper of Bergelson and Leibman \cite{BergelsonLeibman2016} proves highly relevant results. It follows as a special case of Theorem 0.3 in \cite{BergelsonLeibman2016} that, in the situation of Theorem \ref{thm:A}, there exists a set $\a \subset [r]$ such that $\fpa{p\bra{\sum_{i \in \a} n_i}} \leq \e$, for some $r = r(d,\e)$. Hence, trivially, we may ensure that $\a$ has gaps bounded in terms of $d$ and $\e$; however, our result gives a good bound on the gaps, which also is uniform in $\e$. Similarly, a version of Theorem \ref{thm:B} can be read off Theorem 0.5 in \cite{BergelsonLeibman2016}.

\subsection{Bohr sets}\label{sec:INT:Bohr}
 
The notion of a \bohr\ (or \bohrz) set is classical  and well-studied. A set $A \subset \NN_0$ is said to be a \bohr\ set if it contains the preimage of an open, non-empty set $U$ through the natural embedding of $\NN_0$ in the Bohr compactification of $\ZZ$, usually denoted $b\ZZ$. Accordingly, $A$ is a \bohrz\ set if additionally $0 \in U$.

While very satisfying from the categorical point of view, the above definition gives limited idea of what a \bohr\ set looks like. A more concrete description is possible. Namely, a set is \bohr\ if it contains a non-empty set of the form $\{ n \in \NN_0 \ : \ n \a \in U\}$ where $\a \in \TT^m = \RR^m/\ZZ^m$ and $U \subset \TT^m$ is open; $A$ is \bohrz\ if additionally $0 \in U$. Hence, \bohrz\ sets can be viewed dynamically as a return-times sets for the point $0 \in \TT^m$, where the dynamics are given by $x \mapsto x + \a$.

(Note that we construe being a \bohr\ set as a notion of largeness, hence above we only insist on containment, rather than equality. In context when precise structure is important, different definitions are used, see e.g.\ \cite[Section 4.4]{TaoVu}.)

\subsection{Nil--Bohr sets}\label{sec:INT:NilBohr}

With the advent of higher-order Fourier analysis, a natural analogue of the class of \bohr\ sets has come into view. The role of the circle rotations in classical Fourier analysis is now played by \emph{nilrotations}, which we presently define.

Suppose that $G$ is a $d$-step nilpotent Lie group, and let $\Gamma < G$ be a cocompact and discrete subgroup. Here, by \emph{cocompact} we simply mean that the quotient space $G/\Gamma$ should be compact. The space $X = G/\Gamma$ is a $d$-step \emph{nilmanifold} and carries a natural action of $G$, given by $g.x\Gamma = (gx)\Gamma$. There exists a unique Haar measure $\mu$ on the Borel $\sigma$-algebra $\Borel(X)$, which is preserved by all translations $T_g\colon x\Gamma \mapsto g.x\Gamma$. Hence, for any $g \in G$, translation by $g$ is a measure-preserving transformation of $(X,\mu)$. We call any such system $(X,\Borel(X),\mu,T_g)$ a $d$-step \emph{nilrotation}. 

We now define a set $A \subset \NN_0$ to be a \nilbohr {d} set, in analogy to the abelian case, if it contains a non-empty set of the form
$$
	\{ n \in \NN_0 \ : \ g^n \Gamma \in U \}
$$
where $G/\Gamma$ is a $d$-step nilmanifold, and $U \subset G/\Gamma$ is open. If additionally $e \Gamma \in U$ then $A$ is a \nilbohrz {d} set.

A useful example to keep in mind are sets of the form
$$
	\{ n \in \NN_0 \ : \ \fpa{ p(n) } \leq \e \},
$$
where $p \in \RR[x]$ is a polynomial with at least one irrational non-constant coefficient. In general, such sets are \nilbohr {d}. If additionally $p(0) = 0$ then they are \nilbohrz {d}. (This can be seen by a classical construction, which is discussed for instance in \cite[Section 0.16]{BergelsonLeibman2007}.)

\subsection{IP sets}\label{sec:INT:IP}

Another classical notion of largeness which is relevant to us is \ip. For a sequence $(n_i)_{i=1}^\infty$, we define the set of finite sums of $A$,
\begin{equation}
	\FS(n_i) = \bbra{ \sum_{i \in \a} n_i \ : \ \a \in \cF }.
	\label{INT:eq:def-of-FS}
\end{equation}
For brevity of notation, it is convenient to define in this context $n_\a := \sum_{i \in \a} n_i$ for $\a \in \cF$; this is consistent with the natural inclusion $\NN \ni i \mapsto \{i\} \in \cF$.

A set $A \subset \NN_0$ is said to be an \ip\ set if there exists a sequence $(n_i)_{i \in \NN}$ such that $\FS(n_i) \subset A$. Once again, we remark that since we view \ip\ as a notion of largeness, we only require $A$ to contain a set of finite sums (as opposed to being equal to such a set). This is consistent with usage e.g.\ in \cite{BergelsonFurstenbergWeiss2006}, but different from the original definition in \cite{FurstenbergWeiss1978}.

\subsection{SG sets} \label{sec:INT:SG}

In analogy to the \ip\ sets $\FS(n_i)$, we define for $k \geq 0$ the sets $\SG_{k}(n_i)$, where the index sets are additionally required to have bounded gaps. Let $\cS_{k} \subset \cF$ denote the set of finite sets of integers $\a \in \cF$ whose gaps are bounded by $k$; we might call such sets $k$-syndetic. In other words, we require that for any $i \in \a$, either $i = \max \a$, or there exists $j \in \a$ with $i < j \leq i+{k}$. We allow the degenerate case $k = 0$, where $\cS_0 = \NN$ (up to identification $i \mapsto \{i\}$). For a sequence of integers $(n_i)_{i=1}^\infty$ we put
\begin{equation}
	\SG_{k}(n_i) = \bbra{ \sum_{i \in \a} n_i \ : \ \a \in \cS_{k} }.
	\label{INT:eq:def-of-SG}
\end{equation}
In analogy with \ip\ sets, we define a set $A \subset \NN_0$ to be a \sg {k} set if it contains a set of the form $\SG_{k}(n_i)$ for some sequence $n_i$. To the best of our knowledge, this definition first appears in \cite[Def. 2.9]{HostKra2009}. We have an obvious chain of inclusions $\SG_1(n_i) \subset \SG_2(n_i) \subset \dots \subset \FS(n_i)$, whence $\SG_1\! \supset \SG_2\! \supset \dots \supset \IP$. 

The simplest non-degenerate example, though possibly a misleading one, is when ${k} = 1$. Then $\SG_{k}(n_i)$ consists precisely of the consecutive sums $\sum_{i=u}^v n_i$, and it has been noted that $\SG_{1}(n_i) = \Delta(S) := (S - S) \cap \NN$ where $S = \{ \sum_{i=1}^v n_i \ : \ v \in \NN \}$. It is not difficult to see that conversely, any set of the form $\Delta(S)$ as above can be expressed as $\SG_{1}(n_i)$ for some sequence $(n_i)$. Thus, \sg {1} sets coincide with the well-studied class of $\Delta$ sets (see also \cite{BergelsonFurstenbergWeiss2006}).

\subsection{Dual classes}\label{sec:INT:Dual}
For a class $\cC$ of subsets of $\NN_0$, we define the \emph{dual} class $\cC^*$ by declaring that $B \in \cC^*$ if and only if for any $A \in \cC$ the sets $A$ and $B$ intersect non-trivially: $A \cap B \neq \emptyset$ (see e.g.\ \cite[Section 9.1]{Furstenberg1981}).

Specifically, we define the class \sgd {k}, consisting of the sets $B \subset \NN_0$ such that for any choice of integers $(n_i)_{i=1}^\infty$, there exists some $\a \in \cF$ with $n_\a \in B$. We note the reversed chain of inclusions: $\SG_1^* \subset \SG_2^* \subset \dots \subset \IP^*$.

It is clear by definition that for any class $\cC$, the dual class $\cC^*$ is closed under taking supersets. The operation of taking the dual reverses the inclusion: if $\cC \subset \cD$ then $\cC^* \supset \cD^*$. If $\cC$ is partition regular, then $\cC^*$ is easily seen to be closed under finite intersections, but $\cC^*$ will not generally be partition regular \cite[Lemma 9.5]{Furstenberg1981}. If additionally $\emptyset \not \in \cC$ then $\cC \subset \cC^*$ \cite[Lemma 9.4]{Furstenberg1981}. We cite the latter two facts merely to provide context, they are not used at any point.

\subsection{Nil--Bohr sets vs.\ SG sets}\label{sec:INT:SG-vs-NilBohr}
As noted earlier, there is a somewhat unexpected connection between the a priori unrelated notions of \sgd{k} sets and  \nilbohr{d} sets. 

Following the usual convention, we say that a set $A$ is piecewise \nilbohrz {d} if there exists a thick set $T$ (i.e. such that $T$ contains arbitrarily long intervals) and a \nilbohrz {d} set $B$ such that $A = B \cap T$. It was proved in \cite[Theorem 2.10]{HostKra2009} that any \sgd{d} set is piecewise \nilbohrz {d}. (In fact, the result proved there is stronger, with a more rigid notion of ``strongly piecewise''.) The following question arises naturally:

\begin{question}\label{question:basic}
	Is any \nilbohrz {d} set a \sgd {d} set? 
\end{question}

The main purpose of this paper is to answer a weaker variant of this question. The reader will have no problem checking that the following is merely a succinct restatement of Theorem \ref{thm:B}.

\begin{theorem}\label{thm:main}\label{thm:B-short}
	Any \nilbohrz {d} set is \sgd {k}, provided that $k \geq 4d$.
\end{theorem}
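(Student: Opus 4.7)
The plan is to derive Theorem \ref{thm:main} from Theorem \ref{thm:A} by reducing to a simultaneous polynomial recurrence statement and then arguing by induction on the step $d$.

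Using a Mal'cev basis adapted to the lower central series filtration $G = G_1 \supset \dots \supset G_d \supset G_{d+1} = \{e\}$, each Mal'cev coordinate of the orbit $n \mapsto g^n\G$ is a real polynomial $p_j(n)$ with $p_j(0) = 0$ and $\deg p_j \leq d$; moreover, a coordinate corresponding to a basis vector of filtration weight $k$ contributes a polynomial of degree at most $k$. For a sufficiently small neighbourhood $U$ of $e\G$, the condition $g^n\G \in U$ is implied by $\fpa{p_j(n)} \leq \de$ for every coordinate polynomial $p_j$. Theorem \ref{thm:main} therefore reduces to the following \emph{simultaneous polynomial recurrence}: for any finite collection of polynomials $\{p_j\}$ with $p_j(0) = 0$ and $\deg p_j \leq d$, any sequence $(n_i)$ and any $\e > 0$, there is some $\a \in \cF$ with gaps bounded by $\binom{d+1}{2}$ satisfying $\fpa{p_j(n_\a)} \leq \e$ for all $j$.

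I would prove this simultaneous statement by induction on $d$. The base $d = 1$ is classical Bohr recurrence: for finitely many linear polynomials $a_j x$, pigeonhole applied to the partial sums $S_v = \sum_{i \leq v} n_i$ in $\TT^m$ yields $u < v$ with all $a_j(S_v - S_u)$ near zero modulo one, giving $\a = \{u+1, \ldots, v\}$ with gap $1 = \binom{2}{2}$. For the inductive step from $d-1$ to $d$, the gap increment $d = \binom{d+1}{2} - \binom{d}{2}$ matches the gap bound in Theorem \ref{thm:A}, suggesting an application of Theorem \ref{thm:A} (or the ``slight modification'' alluded to in the introduction) to handle the degree-$d$ polynomials, combined with the inductive hypothesis for degrees $\leq d - 1$. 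Concretely, iterate Theorem \ref{thm:A} to extract a sub-IP-sequence of disjoint, densely packed blocks $\a^{(1)}, \a^{(2)}, \dots \in \cF$, each with within-block gaps bounded by $d$, such that the degree-$d$ polynomials remain small modulo one on any finite union $\bigcup_{j \in \c} \a^{(j)}$. Applying the inductive hypothesis to the polynomials of degree at most $d-1$, viewed on the re-indexed sequence $\tilde n_j := n_{\a^{(j)}}$, yields $\c \in \cF$ with gaps $\leq \binom{d}{2}$ in the $j$-index, while the degree-$d$ constraints are preserved by IP-closure. Setting $\a := \bigcup_{j \in \c} \a^{(j)}$ and arranging the blocks tightly in $\NN$, the gap of $\a$ is bounded by $\binom{d}{2} + d = \binom{d+1}{2}$.

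The main obstacle is extracting the IP-closed block family in the inductive step. Since a degree-$d$ polynomial satisfies $p(x+y) = p(x) + p(y) + q(x,y)$ with a non-trivial cross-term $q$, smallness of $p$ on individual blocks does not automatically imply smallness on unions, and one must control the cross-terms via a Hindman-type iteration of Theorem \ref{thm:A}; this is presumably the content of the ``slight modification'' mentioned before Theorem \ref{thm:B}. A secondary bookkeeping point is that the blocks must be packed into consecutive intervals of $\NN$ of bounded size so that gaps within and across blocks combine additively rather than multiplicatively, producing the sharp bound $\binom{d+1}{2}$ rather than a product of the individual gap budgets.
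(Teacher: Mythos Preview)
Your reduction to ordinary polynomials via Mal'cev coordinates does not work. The Mal'cev coordinates of $g^n$ in $G$ are indeed polynomials $p_j(n)$, but the implication ``$\fpa{p_j(n)}\leq\delta$ for all $j$'' $\Rightarrow$ ``$g^n\Gamma$ close to $e\Gamma$'' is false already for the Heisenberg group. With coordinates $(x,y,z)$ and $\Gamma=\ZZ^3$, having $n\alpha$, $n\beta$, $p_3(n)=n\gamma+\binom{n}{2}\alpha\beta$ each close to integers $a,b,c$ does \emph{not} force $g^n\gamma^{-1}$ to be small: the $z$-coordinate of $g^n\gamma^{-1}$ contains a term $a(n\beta-b)$, which is (large integer)$\times$(small error) and need not be small. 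In other words, membership in a small neighbourhood of $e\Gamma$ is governed by \emph{bracket} polynomials, not genuine polynomials, so Theorem~\ref{thm:A} does not apply after your reduction. This is precisely why the paper never leaves the nilmanifold setting and instead works throughout with polynomial maps $\cFe\to G_\bullet/\Gamma$ in the sense of Definition~\ref{def:poly(A->G/Gamma)}.

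Even granting a correct reduction, your inductive scheme has the steps in the wrong order and the hard step underspecified. The paper also argues by induction on the filtration, but it peels off the \emph{abelianisation} $G/G_r\Gamma$ first (Proposition~\ref{prop:main-inductive-step}): one passes to an asymptotic $\cS_{k-r}$-subsequence landing in the sub-nilmanifold $G_r\Gamma/\Gamma$, and then recurses on the shorter filtration. The abelian step here is Proposition~\ref{prop:main-abelian}, which is strictly stronger than Theorem~\ref{thm:A}: it produces a full asymptotic $\cS_l$-subsequence identically zero, not a single $\alpha$, and its proof (Section~\ref{sec:MOD}) via stable sequences, patterns and perturbation groups is the main technical content of the paper. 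Your ``Hindman-type iteration of Theorem~\ref{thm:A}'' is a gesture at exactly this proposition, and the cross-term obstacle you flag is real and not resolved by iterating the single-$\alpha$ statement. Handling the lowest-degree (abelian) part first is also what tames the bracket-polynomial error terms above; your top-degree-first order would not.
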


\begin{remark}
We note that a weaker variant of theorem, with \sgd {k} replaced with \ipd, is true for much simpler (or at least better studied) reasons. It can be checked that any nilrotation is distal (\cite[Chpt IV, Sec. 7]{AuslanderGreenHahn1963}; see also \cite{Keynes1966},\cite{Keynes1967}). More explicitly, for any nilrotation $(T_g, G/\Gamma)$ with metric $d_{G/\Gamma}$, for any $\e > 0$ there exists $\delta > 0$ such that if $x,y \in G/\Gamma$ are two points with $d_{G/\Gamma}(x,y) > \e$ then for all $n$ also $d_{G/\Gamma}(T^n_g x, T^n_g y) > \delta$. On the other hand, for any topological dynamical system $(T,X)$ distality is equivalent to the condition that for any $x \in X$ and any open neighbourhood $U \ni x$, the set $\set{ n \in \NN }{T^n x \in U }$ is \ipd\ \cite[Theorem 9.11]{Furstenberg1981}. Since any \nilbohrz {d} is (a superset of) a set of precisely this form, the claim follows. (Essentially the same argument can be found in \cite[Sec. 0.17]{BergelsonLeibman2007}, and in \cite{BergelsonLeibman2016}.)

\end{remark}

\subsection*{Notation}

By $\NN$ we denote the set $\{1,2,3,\dots\}$; in particular $0 \not \in \NN$. We put $\NN_0 = \NN \cup \{0\}$.

By $\cF$ we denote the partial semigroup of the finite, non-empty subsets of integers, where the operation is the disjoint union. We also put $\cFe = \cF\cup \{\emptyset\}$. Hence, whenever the symbol $\a \cup \b$ is used for $\a, \b \in \cF$ it is implicitly assumed that $\a$ and $\b$ need to be disjoint.

If $G$ is a group equipped with a metric, then $\norm{g} = \norm{g}_{G}$ denotes the distance from $g$ to $e_G$. In particular, for $x \in \RR$, $\norm{x}_{\RR/\ZZ}$ denotes the distance of $x$ from the closest integer.

Standard asymptotic notation, such as $O(\cdot)$ and $\Omega(\cdot)$, is occasionally used: $X = O(Y)$ if $\abs{X} \leq c Y$ for an absolute constant $c$, and $X = \Omega(Y)$ if $0 < Y = O(X)$.

\subsection*{Acknowledgements}
The author thanks Freddie Manners for his generous help with the finer points of the theory of polynomial mappings to nilmanifolds, and Ben Green for much useful advice during the work on this project. Thanks go also to Sean Eberhard, Rudi Mrazovi\'{c}, Przemek Mazur and Aled Walker for many informal discussions. Finally, the author is grateful to the anonymous referee for his insightful comments, which helped improve the presentation of this paper.

This research was partially supported by the National Science Centre (NCN) under grant no. 2012/07/E/ST1/00185. The author also acknowledges the generous support from the Clarendon Fund and SJC Kendrew Fund for his doctoral studies.

\section{Polynomial maps} \label{sec:DEF} 

In order to prove Theorem \ref{thm:B}, we will need a good understanding of ``linear'' maps of the form $n \mapsto g^n \Gamma$ from $\NN$ to a nilmanifold $G/\Gamma$, or more accurately $\a \mapsto g^{n_\a} \Gamma$ where $\a \in \cF$ and $\a \mapsto n_\a = \sum_{i \in \a} n_i$. For reasons which will become apparent shortly, it is more natural to work with a larger class of \emph{polynomial} sequences $\NN \to G/\Gamma$ or $\cF \to G/\Gamma$ respectively, which we will shortly define.

Systematic study of polynomial sequences $\ZZ \to G$ was initiated by Leibman \cite{Leibman1998} (the results generalise easily to sequences $H \to G$ for abelian $H$), although some early results were obtained by Lazard \cite{Lazard-1954} and others. Polynomial sequences $H \to G$ for $H$ nilpotent are studied in \cite{Leibman2002}. The notion of a polynomial sequence whose domain is a partial semigroup, such as $\cF$, does not explicitly appear until later, but can easily be gleaned from \cite{Leibman1998}. From a slightly different perspective, polynomial sequences $\cF \to G$ appear in \cite{BergelsonLeibman-2003}.

Finally, the idea of measuring the ``degree'' of a polynomial sequence by means of a filtration appears in \cite{GreenTaoZiegler-2012} in context of sequences $\ZZ \to G$ (or $\ZZ \to G/\Gamma$), although most results straightforwardly generalise to different domains such as $\cF$. 

To the best of our knowledge, the precise definition of a polynomial sequence $\cF \to G$ we shall use first appears in print in the work of Zorin-Kranich \cite{Zorin-Kranich2012, Zorin-Kranich2013}. For an accessible introduction to polynomial sequences, we refer to \cite{Green-book} or \cite{Tao-book}.

\subsection{Filtered groups and polynomial sequences}
\label{sec:DEF:Filtrations}
\label{sec:DEF:Polynomials}

A Lie \emph{prefiltration} on a Lie group $G$ is a descending sequence $G_\bullet$ of Lie subgroups of $G$ such that $G = G_0 \supseteq G_1 \supseteq G_2 \supseteq \dots $ and for any $i,j$ we have the nesting condition $[G_i,G_j] \subseteq G_{i+j}$. We always assume that the prefiltration terminates at some point, in the sense that for some $d$ we have $G_{d+1} = \{e_G\}$. The least such $d$ is the \emph{length} of the filtration. A Lie \emph{filtration} is a prefiltration $G_\bullet$ such that additionally $G_0 = G_1 = G$. 
Since we have no need to consider filtrations which are not Lie, we will usually omit this adjective. We may also keep the group $G$ implicit, and assume that $G = G_0$.

The most important example of a filtration to bear in mind is the lower central series given by $G_0 = G_1 = G$ and $G_{i+1} := [G,G_i]$ (where $[G,H]$ denotes the group {generated by} elements of the form $[g,h]$ with $g \in G,\ h \in H$). Each of these groups $G_i$ is normal. If $G$ is $d$-step nilpotent, then the length of this filtration is $d$. Conversely, if $G$ has a filtration of length $d$, then $G$ is at most $d$-step nilpotent. (Of course, in general the length of a filtration may be greater than the nilpotency class of the group.) 

If $G_\bullet$ is a prefiltration, we may construct a prefiltration $G'_{i} := G_{i+1}$ ($i \geq 0$) on $G_1$. (Note that the nesting property is clear.) We denote this new prefiltration by $G_{\bullet+1}$. Images and preimages of (pre-)filtrations (by morphisms of Lie groups) are again (pre-)filtrations of (at most) the same length.

As alluded to before, we wish to study maps from $\cF$ (or $\cFe$) into a nilmanifold $G/\Gamma$, taking a rather specific form $\a \mapsto g^{n_\a}\Gamma$. It will be convenient to introduce a more general notion of a polynomial sequence from a general partial semigroup into a filtered group.

\renewcommand{\cA}{\mathcal{A}}
\renewcommand{\a}{\alpha}
\renewcommand{\b}{\beta}
\renewcommand{\c}{\gamma}

Recall that a partial semigroup $(\cA,*)$ is a set $\cA$ equipped with a binary operation $*$ defined of a subset of $\cA \times \cA$, such that $\a * (\b * \c) = (\a * \b) * \c$, whenever both sides are defined. By the usual abuse of notation, we usually refer to the set $\cA$ alone as a semigroup, keeping the operation $*$ implicit. We are mostly interested in the cases $(\cA,*) = (\cFe,\cup)$ and $=(\NN_0,+)$. For $\cA = \ZZ$ the following definition appears in Leibman \cite[Sec. 1.4]{Leibman1998}, and for a fully general version see \cite[Def.\ 1.21]{Zorin-Kranich2013}.

\begin{definition}[Polynomial sequence]
\label{def:poly(A->G)}
	Let $\cA$ be an partial semigroup, and let $G_{\bullet}$ be a prefiltration on a nilpotent Lie group $G$. Then a map $g \colon \cA \to G$ is declared to be polynomial (with respect to $G_{\bullet}$) if either $G = \{e_G\}$ is trivial and $g(\a) = e_{G}$ is the constant sequence, or if $G$ is non-trivial and for every $\beta \in \cA$ there exists a $G_{\bullet + 1}$ polynomial $D_{\beta} g$ such that 
	\begin{equation}	
		D_{\beta}g(\a) = g(\a)^{-1} g(\a * \b) 		\label{DEF:eq:def-of-D-x}
	\end{equation}
	whenever $\a * \b$ is defined. The set of all such polynomial maps is denoted by $\poly(\cA \to G_{\bullet})$. The length of the prefiltration my occasionally be referred to as the degree of the polynomial.
\end{definition}

When $\cA = \cFe$, and $g \in \poly(\cA \to G_\bullet)$ is such that $g(\emptyset) = e_G$, then the symmetric derivative
	\begin{equation}	
		\Delta_{\beta}g(\a) = D_\beta f(\a) g(\b)^{-1} = g(\a)^{-1} g(\a \cup \b) g(\b)^{-1} \qquad 	 (\text{for } \b \cap \a = \emptyset) \label{DEF:eq:def-of-Delta-x}
	\end{equation}
	is a polynomial with respect to $G_{\bullet+1}$, which has the added advantage that $\Delta_\beta g(\emptyset) = e_G$. This property can also be used for as definition of polynomials, when we restrict to maps with $g(\emptyset) = e_G$. The analogous remark applies to maps $\NN_0 \to G$. (See \cite[Sec. 1.3]{Zorin-Kranich2013} for details.)

\begin{example}\label{DEF:ex:2}
	If $G$ is $d$-step nilpotent, then the sequence $\NN \to G$ given by $g(n) = a^n$, ($a \in G$), is polynomial with respect to the lower central series: indeed, $D_{m} g(n) = a^{m}$ and $D_{m_2} D_{m_1} g(n) = e$. We will shortly see that the sequence $g'(n) = a^n b^n$ ($a,b \in G$) is also polynomial with respect to the same filtration. 
\end{example}

\begin{example}\label{DEF:ex:3}
	Let $G = \RR$, equipped with length $d$ filtration $G_0 = G_1 = \dots = G_d = \RR$, $G_{d+1} = \dots = \{0\}$. Then polynomial sequences $\NN \to \RR$ are precisely the polynomials in the conventional sense, that is sequences of the form $p(n) = \sum_{i=1}^d n^i a_i$ where $a_i \in \RR$.
	
	By the same token, for any sequence of integers $(n_i)$ and polynomial $p \colon \NN \to \RR$, the map $\cF \ni \a \mapsto p\bra{n_\a} \in \RR$ is polynomial with respect to the aforementioned filtration.  
\end{example}

\begin{example}\label{DEF:ex:4}
	Let $G_{\bullet}$ be a prefiltration on a nilpotent Lie group $G$, and let $g \in \poly(\NN \to G_\bullet)$. Then, for any sequence $(n_i)_i$, the map $\cF \ni \a \mapsto g(n_\a) \in G$ is polynomial. This is a special instance of a general fact that composition of a polynomial with a morphism of partial semigroups is again a polynomial. 
\end{example}

A principal advantage of working with polynomial maps is that they are closed under products. In a slightly more specific context, this is the main result of \cite{Leibman1998}, in this form it appears in \cite[Thm.\ 1.23]{Zorin-Kranich2013} (see also Section \ref{sec:DEF:HostKra}).

\begin{theorem}[Lazard--Leibman]\label{thm:poly-is-group}
	Let $G_{\bullet}$ be a prefiltration and $\cA$ a partial semigroup. Then $\poly(\cA \to G_{\bullet})$ is a group under pointwise multiplication.
\end{theorem}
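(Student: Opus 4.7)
The plan is to proceed by induction on the length $d$ of the prefiltration $G_\bullet$. The base case, where $G$ is trivial, is immediate: only the constant map $e_G$ is polynomial, and constants form a group. For the inductive step, assume the theorem for all prefiltrations of length strictly less than $d$; in particular the hypothesis applies to the shifted prefiltration $G_{\bullet+1}$.

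For $g, h \in \poly(\cA \to G_\bullet)$, direct manipulation of the definition of $D_\b$ yields the Leibniz-type identities
\[
D_\b(gh)(\a) = h(\a)^{-1}(D_\b g)(\a)\, h(\a) \cdot (D_\b h)(\a)
\]
and
\[
D_\b(g^{-1})(\a) = g(\a)\, (D_\b g)(\a)^{-1}\, g(\a)^{-1}.
\]
Since $D_\b g$ and $D_\b h$ are $G_{\bullet+1}$-polynomial by hypothesis, and the inductive hypothesis closes $\poly(\cA \to G_{\bullet+1})$ under products and inverses, the whole argument reduces to the following closure-under-conjugation claim: if $u \in \poly(\cA \to G_{\bullet+1})$ and $v \in \poly(\cA \to G_\bullet)$, then the conjugate $v^{-1} u v$ lies in $\poly(\cA \to G_{\bullet+1})$.

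To handle this claim, I would rewrite $v^{-1} u v = u \cdot [u, v]$, where $[x, y] := x^{-1} y^{-1} x y$. The nesting property gives $[u(\a), v(\a)] \in [G_1, G_0] \subseteq G_1$, so it remains only to show that $[u, v]$ is itself $G_{\bullet+1}$-polynomial. This is a special case of a more general commutator lemma: if $u \in \poly(\cA \to G_{\bullet+i})$ and $v \in \poly(\cA \to G_{\bullet+j})$, then $[u, v] \in \poly(\cA \to G_{\bullet+i+j})$. One proves it by induction on the length, unpacking $D_\b[u, v]$ with Hall--Witt-style identities such as $[xy, z] = (y^{-1}[x, z] y)[y, z]$ and $[x, yz] = [x, z](z^{-1}[x, y] z)$ and invoking the nesting $[G_i, G_j] \subseteq G_{i+j}$ to check each resulting factor lands in a deep enough subgroup.

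The main obstacle is this commutator lemma. The difficulty is that unpacking $D_\b[u, v]$ produces a proliferation of nested conjugates and commutators of discrete derivatives, and one must carefully track which part of the filtration each factor lies in so that the inductive hypothesis (for shorter prefiltrations and deeper shifts) is applicable to every piece. Once the lemma is in place, the closure-under-conjugation claim and hence closure of $\poly(\cA \to G_\bullet)$ under products and inverses fall out of the two Leibniz identities above, completing the inductive step.
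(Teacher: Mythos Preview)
The paper does not prove this statement at all; it merely cites it as Theorem~1.23 in Zorin-Kranich (who in turn follows Leibman). Your proposal is precisely the standard Leibman argument and is correct in outline: the Leibniz identities for $D_\b(gh)$ and $D_\b(g^{-1})$ reduce everything to closure under conjugation, which in turn reduces to the commutator lemma $[\poly(\cA\to G_{\bullet+i}),\poly(\cA\to G_{\bullet+j})]\subseteq\poly(\cA\to G_{\bullet+i+j})$.

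One point worth making explicit when you write this up: the group property and the commutator lemma are most cleanly proved by a \emph{simultaneous} induction on the length of $G_\bullet$. After you expand $D_\b[u,v]$ via the identities $[xy,z]=[x,z]^y[y,z]$ and $[x,yz]=[x,z][x,y]^z$, you obtain a product of several factors, each of which (by the inductive hypothesis of the commutator lemma applied to the shorter prefiltration $G_{\bullet+i+j+1}$) is $G_{\bullet+i+j+1}$-polynomial; but to conclude that their \emph{product} is polynomial you need the group property at that shorter level. Your sketch has both results ``by induction on the length'', so this is implicit, but the interleaving should be stated to avoid the appearance of circularity.
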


\subsection{Polynomials to nilmanifolds}
We are fundamentally interested not in maps $\poly(\cFe \to G_\bullet)$, but rather in their projections onto $G/\Gamma$, where $\Gamma$ is a discrete cocompact subgroup. In order for such maps to be well behaved, we need to assume that $G_\bullet$ is compatible with $\Gamma$. We will say that $G_\bullet$ is \emph{$\Gamma$-rational} if for any $i$ the discrete subgroup $\Gamma_i := G_i \cap \Gamma$ is cocompact. It was shown by Mal'cev \cite{Malcev1951} that the lower central series is $\Gamma$-rational with respect to any choice of $\Gamma$, under the additional assumption that $G$ is simply connected. 

\begin{definition}\label{def:poly(A->G/Gamma)}
	Let $\cA$ and $G_{\bullet}$ be as in Definition \ref{def:poly(A->G)}, and let $\Gamma$ be cocompact discrete subgroup of $G$ such that $G_\bullet$ is $\Gamma$-rational.
 Then a map $\bar{g} \colon \cA \to G/\Gamma$ is declared to be polynomial (with respect to $G_\bullet$) if and only if $\bar{g}$ takes the form $\bar{g} = g \circ \pi$ where $g \in \poly(\cA \to G_\bullet)$ and $\pi\colon G \to G/\Gamma$ is the standard projection $h \mapsto h \Gamma$.  The set of all such polynomial maps is denoted by $\poly(\cA \to G_{\bullet}/\Gamma)$. 
\end{definition}

It will be often be convenient to assume that the groups $G_i$ are simply connected (see Section \ref{sec:DEF:Connected}). For the sake of brevity, if $G_\bullet$ is a Lie prefiltration of length $d$ where each $G_i$ is simply connected, and $\Gamma$ is a  discrete cocompact subgroup of $G = G_0$ such that $G_\bullet$ is $\Gamma$-rational, we will say that $G_\bullet/\Gamma$ is a $d$-step nilmanifold. 

\begin{remark}
	While we restrict to the simply connected case, most of the statements will remain true also in the general case of not necessarily connected and not necessarily simply connected groups, possibly after minor modifications.
\end{remark}

We mention a crucial fact, whose proof we defer to Section \ref{sec:DEF:HostKra}.

\begin{theorem}\label{thm:poly-is-compact}
	If $G_\bullet/\Gamma$ is a $d$-step nilmanifold, then the space $\poly(\cFe \to G_{\bullet}/\Gamma)$ is compact in the topology of pointwise convergence.
\end{theorem}

We are now ready to formulate versions of Theorems \ref{thm:A} and \ref{thm:B} in the proper generality. 

\begin{theorem}[\ref{thm:A}, strong version]\label{thm:A-poly(F->T)}
	
	Let $f \colon \cFe \to \TT^m$ be a polynomial of degree $d$ with $f(\emptyset) = 0$. Then, for any $\e > 0$, there exists $\a \in \cS_{d}$, such that $\norm{ \nobar{f}(\a) } < \e$.
\end{theorem}

\begin{theorem}[\ref{thm:B}, strong version]\label{thm:B-poly(F->G/Gamma)}
	Let $G_{\bullet}/\Gamma$ be a nilmanifold of length $d$, and let $\bar{f} \in \poly(\cFe \to G_{\bullet}/\Gamma)$ with $f(\emptyset) = e \Gamma$. Then, for any open neigbourhood $e\Gamma \in U \subset G/\Gamma$, there exists $\a \in \cS_{k}$ with $k \leq 4d$, such that $\bar{f}(\a) \in U$.
\end{theorem}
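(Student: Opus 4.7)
The plan is to prove the statement by induction on the length $d$ of the filtration $G_\bullet$. The base case $d = 0$ is trivial: $G$ is the trivial group, so $\bar{f} \equiv e\Gamma$, and any singleton $\alpha = \{n\}$ lies in $\cS_0 = \NN$ and satisfies $\bar{f}(\alpha) \in U$ (note $\binom{1}{2} = 0$).

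For the inductive step, the idea is to factor out the last, necessarily central and abelian, piece of the filtration. Since $[G,G_d] \subseteq G_{d+1} = \{e\}$, the subgroup $G_d$ lies in the centre of $G$, and $[G_d,G_d] \subseteq G_{2d} = \{e\}$ makes it abelian. Setting $\bar{G} := G/G_d$, $\bar{\Gamma} := \Gamma G_d/G_d$, and endowing $\bar{G}$ with the induced filtration of length $d-1$, the projection $\tilde{f} \colon \cFe \to \bar{G}/\bar{\Gamma}$ of $\bar{f}$ is polynomial with respect to this shorter filtration, and $\tilde{f}(\emptyset) = e\bar{\Gamma}$. By the inductive hypothesis applied to $\tilde{f}$, for any open $V \ni e\bar{\Gamma}$ one finds $\alpha_* \in \cS_{\binom{d}{2}}$ with $\tilde{f}(\alpha_*) \in V$. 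After choosing Malcev coordinates compatible with the projection $G \to \bar{G}$, this says $\bar{f}(\alpha_*) = \sigma \cdot z \cdot \Gamma$ with $\sigma$ close to $e_G$ and $z \in G_d$, reducing the problem to cancelling the torus residue $z$ in the fibre $G_d/(G_d \cap \Gamma)$.

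To cancel this residue, I would construct a correcting block $\alpha_\sharp \in \cF$ disjoint from $\alpha_*$, placed immediately to its right so the join contributes gap $1$. The map $\beta \mapsto \bar{f}(\alpha_* \cup \beta)$ remains polynomial with respect to $G_\bullet$; its $G_d$-component, expressed in the Malcev coordinates of the fibre torus, reduces to a tuple of polynomial functions of degree at most $d$ on the torus. An application of Theorem \ref{thm:A}, iterated coordinate-by-coordinate (with a straightforward pigeonholing to handle several torus coordinates simultaneously), produces $\alpha_\sharp \in \cS_d$ making the $G_d$-residue as small as desired. Choosing $V$ small enough that the cumulative perturbation lies in $U$, the set $\alpha = \alpha_* \cup \alpha_\sharp$ has internal gaps bounded by $\max\bigl(\binom{d}{2}, d, 1\bigr) \leq \binom{d+1}{2}$ and satisfies $\bar{f}(\alpha) \in U$.

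The main obstacle is the precise identification of the $G_d$-component of $\bar{f}(\alpha_* \cup \beta)$ as a polynomial of bounded degree on the torus. This is where the compactness statement (Theorem \ref{thm:poly-is-compact}) enters: by extracting limits along shifts of the map, one ensures that the polynomials arising on the fibre torus lie in a uniformly controlled class to which Theorem \ref{thm:A} applies cleanly. The additive form of the bound, $\binom{d+1}{2} = \binom{d}{2} + d$, then has a transparent interpretation: each induction step pays exactly $d$ in the syndeticity bound (the $d$-syndetic correction supplied by Theorem \ref{thm:A} at the top layer), and these contributions accumulate as $1 + 2 + \cdots + d$.
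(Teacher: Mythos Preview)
Your inductive scheme has a genuine gap at the correction step. After finding $\alpha_* \in \cS_{\binom{d}{2}}$ with $\tilde f(\alpha_*)$ close to $e\bar\Gamma$, you want $\alpha_\sharp \in \cS_d$ so that $\bar f(\alpha_*\cup\alpha_\sharp) \in U$. Two things go wrong. First, appending $\alpha_\sharp$ moves the quotient value: $\tilde f(\alpha_*\cup\alpha_\sharp)$ need not be anywhere near $e\bar\Gamma$, no matter how small $V$ was, so ``choosing $V$ small enough'' does not control this. Second, and more fundamentally, the fibre problem you pose is a \emph{hitting} problem, not a \emph{recurrence} problem: the $G_d$--coordinate of $\beta\mapsto\bar f(\alpha_*\cup\beta)$ takes the value $z$ at $\beta=\emptyset$, and you need it to reach $0$. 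Theorem~\ref{thm:A} (and its $\cFe$--version, Proposition~\ref{prop:main-abelian}) only returns a polynomial to its value at $\emptyset$; it does not send it to a prescribed target. There is no mechanism in your argument for cancelling $z$. (There is also a smaller mismatch: the fibre map is a general degree-$d$ polynomial $\cFe\to\TT^m$, not of the special form $\alpha\mapsto p(n_\alpha)$, so Theorem~\ref{thm:A} is not the right tool even formally.)

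The paper avoids both issues by strengthening the inductive statement to Theorem~\ref{thm:main-robust}, which produces an asymptotic $\cS_l$--\emph{subsequence} identically equal to $e\Gamma$, rather than a single $\alpha$. The induction then peels off the \emph{top} abelian quotient $G/G_r\Gamma$ (where $r$ is the first index with $G_r\subsetneq G$): Proposition~\ref{prop:main-abelian} is applied to this torus---a genuine recurrence problem with value $0$ at $\emptyset$---yielding a subsequence on which $\bar f$ takes values in $G_r\Gamma/\Gamma$ (Proposition~\ref{prop:main-inductive-step}, Lemma~\ref{lem:poly-on-subnilmanifold}). One then iterates on the strictly smaller nilmanifold. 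Because each step outputs a subsequence, the reductions compose cleanly; because each step is a recurrence (not hitting) problem on a torus, Proposition~\ref{prop:main-abelian} applies directly. The cost $\binom{d+1}{2}$ arises as the sum of the degrees at each stage, matching your heuristic $1+2+\cdots+d$, but realised through the subsequence machinery rather than through concatenating single sets.
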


\begin{remark}\label{remark:main-strong-vs-weak}
	Theorem \ref{thm:A-poly(F->T)} is ostensibly stronger than Theorem \ref{thm:A}, and there is a similar relation between Theorems \ref{thm:B-poly(F->G/Gamma)} and \ref{thm:B}. While it is beyond the scope of our investigation to ask if Theorem \ref{thm:A} formally implies \ref{thm:A-poly(F->T)}, we pause to give some examples polynomials of the special form we are interested in are already dense in the family of all polynomials.
\end{remark}

\begin{example}\label{ex:linear-approximate-general}
	Fix $\e_i >0$ and $\theta \in \RR \setminus \QQ$, and let $f \colon \cFe \to \TT = \RR/\ZZ$ be a degree $1$ polynomial with $f(\emptyset) = 0$. Then, there exist $(n_i)_{i=1}^\infty$ such that for all $\a \in \cFe$ we have $\norm{ f(\a) - n_\a \theta} < \e_\a$, where as usual $n_\a = \sum_{i \in \a} n_i$, $\e_\a = \sum_{i \in \a} \e_i$. 
\end{example}
\begin{proof}
	By Proposition \ref{prop:structure-of-poly(F->T)}, $f$ takes the form $f(\a) = \sum_{i\in \a} a_i$. Choose $n_i$ so that $\norm{a_i - n_i \theta} < \e_i$.
\end{proof}

\begin{example}\label{ex:quadratic-approximate-general}
	Fix $\e > 0$ and $r \in \NN$.
	Let $\theta \in \RR \setminus \QQ$, and let $f \colon \cFe \to \TT$ be a degree $2$ polynomial with $f(\emptyset) = 0$. Then, there exist $(n_i)_{i=1}^r$ such that for all $\a \subset [r]$ we have:
	\begin{equation}\label{eq:021}
		\fpa{f(\a) - n_\a^2 \theta} < \e,
	\end{equation}
	where as usual $n_\a = \sum_{i \in \a} n_i$.
\end{example}
\begin{proof}

Using Proposition \ref{prop:structure-of-poly(F->T)}, $f(\a) = \sum_{\gamma \subset \a} a_\gamma$ (where $a_\gamma = 0$ if $\abs{\gamma} > 2$ or $\gamma = \emptyset$). Subject to the choice of $n_i$'s, write $b_{\{i,j\}} = 2 n_i n_j \theta$, $b_{\{i\}} = n_i^2 \theta$, and $b_\gamma = 0$ otherwise, so that $n_\a^2 \theta = \sum_{\gamma \subset \a} b_\gamma$.

To obtain \eqref{eq:021}, it will suffice to ensure that $\fpa{b_\gamma - a_\gamma} < \frac{\e}{2^r}$ for all $\gamma$, for a suitable choice of $(n_i)_i$. We claim that more is true, namely that $(b_\gamma)_{\gamma}$ is equidistributed in $\TT^{\binom{r+1}{2}}$ (here, $\gamma$ runs over all non-empty subsets of $[r]$ of size $\leq 2$). If this was no so, then by the multidimensional version of Weyl's equidistibution theorem, there would exist $k_\gamma \in \ZZ$, not all $0$, such that $\sum_{\gamma \subset [r]} k_\gamma b_\gamma$ is a polynomial with integer coefficients in $(n_i)_i$ --- but this is impossible since $\theta \not \in \QQ$, and the non-trivial $b_\gamma$ are distinct monomials.
\end{proof}

	Similar statements hold for polynomials of higher degrees, the only difference being that the verification of equidistibution in the final step becomes more mundane.

\subsection{Connectivity}\label{sec:DEF:Connected}

If the nilpotent group $G$ is simply connected (which we take to mean, in particular, connected) then the exponential map $\exp\colon \mathfrak{g} \to G$ is a homeomorphism from the Lie algebra $\mathfrak{g}$ of $G$ and $G$ itself. If $G_\bullet$ is a prefiltration where each of the groups $G_i$ is connected, then there exists a Mal'cev basis for $\mathfrak{g}$, where the description of $G$ takes a particularly simple form (see \cite{Malcev1951}). Because of this, the assumption of simple connectivity is often made in the literature (notably \cite{GreenTao2010}, \cite{GreenTao2012}, \cite{GreenTaoZiegler-2012}).

On the other hand, the definition of \nilbohr{d} sets in \cite[Def. 2.2]{HostKra2009} makes no assumptions about connectivity whatsoever. The purpose of this section is to bridge this gap. We record that the assumption of simple connectivity can be freely added, and that it does not matter whether we work with genuine polynomial sequences or ``linear sequences''.

For a nilmanifold $G/\Gamma$ (with no topological assumptions on $G$), open $U\subset G/\Gamma$ and (polynomial) sequence $g\colon \NN_0 \to G/\Gamma$, denote the corresponding (\nilbohrz{d}) set 
	\begin{equation}
		\label{MAIN:eq:def-of-NilBohr}
		B(g,U) := \set{n \in \NN_0}{g(n)\Gamma \in U }.
	\end{equation}

\begin{lemma}\label{lem:reduction-to-simply-connected}
	Fix $d \geq 1$. Then, for a set $A \subset \NN$, the following properties are equivalent to $A$ being a \nilbohrz{d} set:
\begin{enumerate}
\item\label{cond@reduction:lin} there exists a $d$-step nilmanifold $G/\Gamma$ (with $G$ not necessarily simply connected), an open neighbourhood $e\Gamma \in U \subset G/\Gamma$, and a ``linear'' sequence $g(n) = a^n$ ($a \in G$) such that $A \supset B(g, U)$;

\item\label{cond@reduction:lincon} there exists a $d$-step nilmanifold $G/\Gamma$ with $G$ simply connected (in particular connected), an open neighbourhood $e\Gamma \in U \subset G/\Gamma$, and a ``linear'' sequence $g(n) = a^n$ ($a \in G$) such that $A \supset B(g, U)$;

\item\label{cond@reduction:poly-con} there exists a length $d$ nilmanifold $G_\bullet/\Gamma$ (with each $G_i$ simply connected), an open neighbourhood $e\Gamma \in U \subset G/\Gamma$, and $g \in \poly(\NN \to G_\bullet/\Gamma)$, $g(0) = e$, such that $A \supset B(g, U)$;
\end{enumerate}	 
\end{lemma}
\begin{proof}
	Condition (\ref{cond@reduction:lin}) is the one used to define \nilbohrz{d} sets in Section \ref{sec:INT}.
		
	The equivalence between (\ref{cond@reduction:lincon}) and (\ref{cond@reduction:poly-con}) follows from  Proposition C.2 \cite{GreenTaoZiegler-2012}, which asserts that general polynomial sequences can be lifted to linear ones. Similar argument (without assumption of simple connectivity) appears also in \cite[Proposition 3.14]{Leibman2005} and in the proof of Theorem $\mathrm{B}^*$ in \cite{Leibman-2005b}. Equivalence between (\ref{cond@reduction:lin}) and (\ref{cond@reduction:lincon}) is established in and \cite[Section 1.11]{Leibman2005} by means of embedding an arbitrary nilmanifold in a simply connected one; see also \cite[Section 3.1.2]{HuangShaoYe2014}
\end{proof}

In light of the above considerations, we can --- and will --- always assume that the groups $G_i$ constituting the prefiltration are connected and simply connected.

\begin{remark}\label{rmk:base-point-is-wlog-e}
	Yet another definition of a \nilbohrz{d} set is possible. Recall that the typical \nilbohrz{d} set takes the form $\set{n \in \NN_0}{g(n)\Gamma \in V \Gamma}$, where $e \in V \subset G$ is open, and $g \in \poly(\NN_0 \to G_\bullet)$ with $g(0) = e$. Instead of the assumptions $e \in V$ and $g(0) = e$, we could put a milder restriction $g(0) \in V \Gamma$. Again, this does not lead to a more general notion. As noted in \cite[Section 3.1.4.]{HuangShaoYe2014}, upon replacing $\Gamma$ with $\Gamma' = g(0)\Gamma g(0)^{-1}$, $V$ with $V' = V g(0)^{-1}$, and $g$ with $g'(n) = g(n)g(0)^{-1}$, we recover $\set{n \in \NN}{g(n) \in V \Gamma} = \set{n \in \NN_0}{g'(n) \in V' \Gamma'}$, and $g'(0) = e$. In fact, the original definition in \cite{HostKra2009} uses this seemingly more general form.
\end{remark}

\subsection{VIP-systems}\label{sec:DEF:VIP-systems}

A simple but already interesting instance of the above definitions is the abelian one, where $G = \RR^m$, $\Gamma = \ZZ^m$ and the filtration is given by $$G_0 = G_1 = \dots = G_d = \RR^m,\ G_{d+1} = G_{d+2} = \dots = \{0\}.$$ In this situation, we will simply speak of a polynomial of degree $d$ and keep the filtration implicit. We denote the set of all polynomial maps from $\cA$ to $\TT^m = \RR^m/\ZZ^m$ by $\poly(\cA \to \TT^m)$. 

It is a general fact that if $G_i \supset G_i'$ are two prefiltrations then $\poly(\cA \to G_\bullet) \supset \poly(\cA \to G_\bullet')$. In particular, any polynomial $\cA \to \TT^m$ with respect to some prefiltration of length $d$ is a polynomial of degree $d$, as above.

Such maps are a special case of a more general notion of a \emph{$\mathrm{VIP}$-system}, introduced in \cite{BergelsonFurstenbergMcCutcheon1996}, well before the theory of polynomial maps between nilpotent groups flourished. (In our notation, a {$\mathrm{VIP}$-system} is essentially a polynomial map $\cFe \to \Omega$, where $\Omega$ is an abelian group.)

The following structural result is well known, see \cite[Proposition 2.5]{McCutcheon1999}. We provide a proof for the convenience of the reader; similar ideas will appear in the proof of Proposition \ref{prop:structure-of-poly(F->T)}.

\begin{proposition}[Structure of polynomials $\cF \to \TT^m$]
\label{prop:structure-of-poly(F->T)}

	Suppose that $\nobar{f} \colon \cFe \to \TT^m$ is a polynomial of degree $d$. Then $\nobar{f}$ admits a representation of the form:
	\begin{equation}	
		\nobar{f}(\a) = \sum_{\substack{ \c \subset \a \\ \abs{\c} \leq d}} a_{\c},
		\label{DEF:eq:form-of-poly(F->T)}
	\end{equation}
	where $\a_{\c} \in \TT^m$ are constants. Conversely, any map of the form \eqref{DEF:eq:form-of-poly(F->T)} is a polynomial of degree $\leq d$.
\end{proposition}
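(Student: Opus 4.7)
The plan is to establish the equivalence between the polynomial property and the explicit sum formula by passing through the iterated discrete derivative operator. Since the target group is abelian, I write everything additively and define, for disjoint $\alpha, \beta \in \cFe$, the first difference $D_\beta \bar f(\alpha) := \bar f(\alpha \cup \beta) - \bar f(\alpha)$; this agrees with the operation in \eqref{DEF:eq:def-of-D-x} for $G = \TT^m$.

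For the forward direction, I would argue as follows. Given any function $\bar f\colon \cFe \to \TT^m$, define candidate coefficients by Möbius inversion on the Boolean lattice,
\begin{equation*}
a_\gamma := \sum_{\delta \subset \gamma} (-1)^{\abs{\gamma \setminus \delta}} \bar f(\delta),
\end{equation*}
so that the identity $\bar f(\alpha) = \sum_{\gamma \subset \alpha} a_\gamma$ holds automatically as a purely combinatorial statement. The content is then to show that the polynomial hypothesis forces $a_\gamma = 0$ when $\abs{\gamma} > d$. For this I would observe the telescoping identity: if $\gamma = \{i_1, i_2, \dots, i_k\}$ with $i_1 < i_2 < \dots < i_k$, then
\begin{equation*}
a_\gamma = D_{\{i_k\}} D_{\{i_{k-1}\}} \cdots D_{\{i_1\}} \bar f(\emptyset),
\end{equation*}
which follows by induction on $k$ from the binomial expansion of $D_\beta$. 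Now, by Definition \ref{def:poly(A->G)} iterated, $D_{\{i_k\}} \cdots D_{\{i_1\}} \bar f$ lies in $\poly(\cFe \to G_{\bullet+k}/\Gamma)$; since the filtration of degree $d$ satisfies $G_{d+1} = \{0\}$, this map is identically $0$ once $k > d$, giving $a_\gamma = 0$ as desired, and reducing the sum to the range $\abs{\gamma} \leq d$.

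For the converse direction, I would induct on $d$. The base case $d = 0$ is immediate since $\bar f \equiv a_\emptyset$ is constant and the filtration $G_{\bullet+1}$ is already trivial. For the inductive step, suppose $\bar f(\alpha) = \sum_{\gamma \subset \alpha,\ \abs{\gamma} \leq d} a_\gamma$. A direct computation, splitting each $\gamma \subset \alpha \cup \beta$ with $\gamma \cap \beta \ne \emptyset$ as $\gamma = \gamma_1 \sqcup \gamma_2$ with $\gamma_1 \subset \alpha$ and $\emptyset \ne \gamma_2 \subset \beta$, yields
\begin{equation*}
D_\beta \bar f(\alpha) = \sum_{\substack{\gamma_1 \subset \alpha \\ \abs{\gamma_1} \leq d-1}} b^{(\beta)}_{\gamma_1}, \qquad b^{(\beta)}_{\gamma_1} := \sum_{\substack{\emptyset \ne \gamma_2 \subset \beta \\ \abs{\gamma_2} \leq d - \abs{\gamma_1}}} a_{\gamma_1 \cup \gamma_2},
\end{equation*}
which is a sum of the form \eqref{DEF:eq:form-of-poly(F->T)} of degree at most $d-1$. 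By the inductive hypothesis applied with the filtration $G_{\bullet+1}$ (whose length is $d-1$), $D_\beta \bar f \in \poly(\cFe \to G_{\bullet+1}/\Gamma)$, so $\bar f \in \poly(\cFe \to G_\bullet/\Gamma)$ by Definition \ref{def:poly(A->G)}.

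The only part requiring genuine care is the telescoping identity $a_\gamma = D_{\{i_k\}} \cdots D_{\{i_1\}} \bar f(\emptyset)$, because one must be careful to order the derivatives so that all the intermediate unions in the definition of $D$ are indeed disjoint (i.e.\ respect the partial semigroup structure on $\cFe$); enumerating the elements of $\gamma$ in any fixed order and applying $D_{\{i_1\}}$ first, then $D_{\{i_2\}}$, etc., evaluated at $\emptyset$, all operations are legal. Everything else is bookkeeping, and the abelian, additive setting makes the nesting conditions automatic.
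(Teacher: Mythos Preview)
Your proof is correct. The converse direction (that any map of the form \eqref{DEF:eq:form-of-poly(F->T)} is polynomial of degree $\leq d$) is essentially the same computation as in the paper: both compute $D_\beta \bar f$ explicitly, recognise it as a sum of the same form with degree lowered by one, and induct.

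For the forward direction the two arguments genuinely differ. The paper first builds, by inclusion--exclusion, a function $\bar g$ of the form \eqref{DEF:eq:form-of-poly(F->T)} agreeing with $\bar f$ on all $\alpha$ with $\lvert\alpha\rvert\leq d$, then reduces to showing the degree-$d$ polynomial $\bar f-\bar g$, which vanishes on small sets, is identically zero; this last step is a nested induction on $\lvert\alpha\rvert$ and on $d$ (via the derivative). Your route is more direct: you define \emph{all} coefficients $a_\gamma$ at once by M\"obius inversion on the Boolean lattice, so the representation $\bar f(\alpha)=\sum_{\gamma\subset\alpha}a_\gamma$ is tautological, and the only issue is killing the $a_\gamma$ with $\lvert\gamma\rvert>d$. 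The key identity $a_\gamma = D_{\{i_k\}}\cdots D_{\{i_1\}}\bar f(\emptyset)$ does this in one stroke, since iterating Definition~\ref{def:poly(A->G)} forces the right-hand side to land in the trivial group once $k>d$. This avoids the nested induction entirely and makes the role of the filtration more transparent; the paper's argument, on the other hand, never needs to write down the iterated-derivative formula explicitly. Your care about disjointness in the last paragraph is warranted and correctly handled: because the evaluation is at $\emptyset$ and the singletons $\{i_j\}$ are pairwise disjoint, every intermediate $\alpha*\beta$ in the recursion is defined, so the values are determined by $\bar f$ irrespective of how the polynomial extensions $D_\beta$ are chosen.
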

\begin{proof}
	Suppose first that $\nobar{f}$ takes the form \eqref{DEF:eq:form-of-poly(F->T)}. We show by induction on $d$ that $\nobar{f}$ is a polynomial of degree $d$; the case $d=0$ is clear. For $d \geq 1$, the discrete difference relation $D_{\b} \nobar{f} = \nobar{f}(\a \cup \b) - \nobar{f}(\a)$ of \eqref{DEF:eq:def-of-D-x} is satisfied for 
	$$
		D_\b \nobar{f} (\a) := 
		\sum_{\substack{\gamma \subset \a \\ \abs{\gamma} \leq d-1}} { 
		\sum_{\substack{\delta \subset \b \\ \abs{\delta} \leq d-\abs{\gamma} } }
		a_{\gamma \cup \delta} },
	$$
which is again of the form \eqref{DEF:eq:form-of-poly(F->T)} with degree $d-1$, so the claim follows.

Conversely, let $\nobar{f}\colon \cFe \to \TT^m$ be a polynomial of degree $\leq d$. By a inclusion-exclusion argument, it is easy to construct $\nobar{g}$ of the form \eqref{DEF:eq:form-of-poly(F->T)} such that $\nobar{f}(\a) = \nobar{g}(\a)$ if $\abs{\a} \leq d$. Hence, replacing $\nobar{f}$ with $\nobar{f} - \nobar{g}$ if necessary, we may assume that $\nobar{f}(\a) = 0$ whenever $\abs{\a} \leq d$. We show, by induction on $d$ this condition implies $\nobar{f}(\a) = 0$ for all $\a \in \cFe$. Because degree $0$ polynomials are constant, the case $d=0$ is clear; assume $d \geq 1$.

Choose any $\b \in \cF$ with $\abs{\b} = 1$. By the Definition \ref{def:poly(A->G)}, there is a polynomial $D_\b \nobar{f}$ of degree $d-1$ such that 
$$
D_\b \nobar{f} (\a) = \nobar{f} (\a \cup \b) - \nobar{f}(\a), \qquad (\a \cap \b = \emptyset).
$$
If $\abs{\a} \leq d-1$ then $D_\b \nobar{f}(\a) = 0$. Using the inductive assumption, we conclude that $D_\b \nobar{f}(\a) = 0$ for all $\a \in \cFe$, $\a \cap \b = \emptyset$. Because $\b$ was arbitrary, if $\nobar{f}(\a) = 0$ for all $\a \in \cFe$ of a given size $\abs{\a} = n$, then the same holds for $\abs{\a} = n+1$; hence by induction $\nobar{f}(\a) = 0$ for all $\a$.
\end{proof}

\subsection{Host-Kra cube groups}\label{sec:DEF:HostKra}
\renewcommand{\k}{k}
\newcommand{\Q}[1]{ {\{0,1\}^{#1}} }
A useful approach to polynomial maps is obtained by introducing the notion of \emph{Host-Kra cube}, or more generally \emph{cube group} $\HK^{\k}(G_{\bullet})$. This notion was first introduced (thought with a different name) by Host and Kra in \cite{HostKra2005}, and is extensively used in a number of papers, including \cite{GreenTao2010, GreenTao2012}. It is also a basis for the work of Szegedy and Camarena
\cite{SzegedyCamarena2010} later refined by Gutman, Manners and Varj\'{u} \cite{GutmanMannersVarju-1,GutmanMannersVarju-2,GutmanMannersVarju-3}. We use some basic facts, using \cite[Appendix E]{GreenTao2010} and \cite{GreenTao2012} as our main reference; an accessible introduction can be found in \cite{Green-book}. Throughout, $k \geq 1$ is an integer, and $G_\bullet$ is a prefiltration consisting of simply connected groups.

For any $\k$, we may consider the \emph{cube} $\Q\k$. It is often convenient to identify $\Q\k$ with the powerset $\cP([\k])$. In particular, $\Q\k$ carries a natural partial order where $\omega \leq \omega'$ if $\omega_i \leq \omega_i'$ for all $i$.

For $\omega \in \Q\k$ and $g \in G$, we define $g^{[\omega]} \in G^{\Q\k}$ by
$$g^{[\omega]}_{\sigma} =
\begin{cases}
g & \text{if } \sigma \geq \omega, \\
e_G & \text{otherwise}.
\end{cases}$$
Hence, $g^{[\omega]}$ can be viewed as a cube with entries $g$ on the upper face $\set{\sigma \in \Q\k }{ \sigma \geq \omega }$, and $e_G$ elsewhere. 

If $G_\bullet$ is a prefiltration, we further define the \emph{face group} $G^{[\omega]}$ to be the subgroup of $G^{\Q\k}$ generated by elements of the form $g^{[\omega]}$ with $g \in G_{\abs{\omega}}$, where $\abs{\omega} := \abs{\set{i \in [k] }{ \omega_i = 1}}$. Finally, we define the Host-Kra cube group $\HK^{\k}(G_{\bullet})$ to be the group generated by all face groups $G^{[\omega]}$. 

Faces mentioned above can be thought of as ``upper faces''. One can, for a face $F = \set{\omega \in \Q{k}}{\omega_i = \sigma_i \text{ for } i \in I}$ of codimension $\codim F = \abs{I}$, consider the face group $G^{[F]}$ generated by elements $g^{[F]}$ given by $g^{[F]}_{\sigma} =
\begin{cases}
g & \text{if } \sigma \in F, \\
e_G & \text{otherwise},
\end{cases}$
where $g \in G_{\abs{I}}$. These are, however, contained in the group $\HK^{\k}(G_{\bullet})$ defined above (see \cite{GreenTao2010}, discussion after E.5).

A basic fact lying at the basis for many inductive arguments is the commutator relation:
\begin{equation}
[G^{[\sigma]}, G^{[\rho]}] \subset G^{[\sigma \cup \rho]}, \qquad \rho, \sigma \in \Q{k},
\label{fact:HK:commute}
\end{equation}
which follows directly from the identity $[g^{[\sigma]}, h^{[\rho]}] = [g,h]^{[\sigma \cup \rho]}$ (see \cite[Lemma E.5]{GreenTao2010}).

For a (total) order $\prec$ on $\Q{k}$, compatible with inclusion in the sense $\sigma \leq \rho$ coordinatewise, then also $\sigma \preceq \rho$ ($\sigma,\rho \in \Q{k}$), we define ordered products $\prod_{\omega \in \Omega}^\prec \bb x_\omega := \bb x_{\omega_1} \bb x_{\omega_2} \dots \bb x_{\omega_r}$, where $\Omega = \{\omega_1 \prec \omega_2 \prec \dots \prec \omega_r\} \subset \Q{k}$ (we use this with $\bb x_\omega \in G^{\Q{k}}$). 
	
\begin{fact}\label{fact:HK:representation}
If $\bb g = (g_\omega) \in \HK^k(G_\bullet)$, then the representation $\bb g = \prod_{\omega }^{\prec} \tilde g_{\omega}^{[\omega]}$ with $\tilde g_\omega \in G_{\abs{\omega}}$ exists and is unique. Moreover, $\tilde g_\omega$ is a word in $ g_{\sigma}$,  $\sigma \leq \omega$.
\end{fact}	
\begin{proof}
	See \cite[Lemma E.6]{GreenTao2010}, or \cite[Lemma 6.4]{GreenTao2012}.
\end{proof}

The key reason for interest in the Host-Kra cube group is the characterisation of polynomial maps which they provide. Let $\cFe^{[\k]}$ denote the set of parallelepipeds, i.e.\ cubes of the form $\{\a_\omega\}_{\omega \in \Q\k}$ with $\a_\omega = \a_0 \cup \bigcup_{i \in \omega} \a_i$ for some disjoint $\a_0,\a_1,\dots, \a_k \in \cFe$. 

\begin{proposition}\label{prop:poly-FCAE-G}
Let $G_{\bullet}$ be a length $d$ prefiltration.
	A sequence $f \colon \cFe \to G$ is polynomial in the sense of Definition \ref{def:poly(A->G)} if and only if $f$ maps $\cFe^{[\k]}$ to $\HK^\k(G_\bullet)$ for each $\k$.
\end{proposition}

Note that because $\HK^{k}(G_\bullet)$ are groups, Theorem \ref{thm:poly-is-group} follows from Proposition \ref{prop:poly-FCAE-G} immediately.

To prove this proposition, it is convenient to isolate another fact on Host-Kra cube groups (implicit in \cite[Proposition 6.5]{GreenTao2012}). For $\bb g, \bb g' \in G^{\Q{k}}$, denote by $\bb h = (\bb g, \bb g') \in \G^{\Q{k+1}}$ the result of ``glueing'' $\bb g$ and $\bb g'$, i.e.\ the cube with $\bb h_{\omega 0} = \bb g_{\omega}$ and $\bb h_{\omega 1} = \bb g'_{\omega}$.
\begin{fact}\label{fact:HK:cuts}
	For a cube $\bb g \in G^{\Q{k}}$ the following are equivalent:
	\begin{enumerate}[label=({\arabic*}a),ref=({\arabic*}a)]
	\item\label{cond:HK:1a} $\bb g \in \HK^{k}(G_\bullet)$;
	\item\label{cond:HK:2a} $(\bb g, \bb g) \in \HK^{k+1}(G_\bullet)$.
	\end{enumerate}
	Also, the following are equivalent:
	\begin{enumerate}[label=({\arabic*}b),ref=({\arabic*}b)]
	\item\label{cond:HK:1b} $\bb g \in \HK^{k}(G_{\bullet+1})$;
	\item\label{cond:HK:2b}  $(\bb e, \bb g) \in \HK^{k+1}(G_{\bullet+1})$, where $\bb e = (e_G)_{\omega \in \Q{k}}$.	
	\end{enumerate}
\end{fact}
\begin{proof}
	Fix a compatible order $\prec$ on $\Q{k}$. For the implication \ref{cond:HK:1a} $\Rightarrow$ \ref{cond:HK:2a}, write (using Fact \ref{fact:HK:representation}) $\bb g = \prod_{\omega \in \Q{k}}^{\prec} g_{\omega}^{[\omega]}$. Then $(\bb g, \bb g) = \prod_{\omega \in \Q{k}}^{\prec} g_{\omega}^{[\omega0]} \in \HK^{k+1}(G_\bullet)$ if $\bb g \in \HK^{k}(G_\bullet)$. By the same token, for \ref{cond:HK:1b} $\Rightarrow$ \ref{cond:HK:2b}, we have $(\bb e, \bb g) = \prod_{\omega \in \Q{k}}^{\prec} g_{\omega}^{[\omega1]} \in \HK^{k+1}(G_\bullet)$ if $\bb g \in \HK^{k}(G_{\bullet+1})$.
	
	Conversely, for \ref{cond:HK:2a} $\Rightarrow$ \ref{cond:HK:1a}, if $(\bb g, \bb g) = \prod_{\omega \in \Q{k+1}}^{\prec} g_\omega^{[\omega]} \in \HK^{k+1}(G_\bullet)$, then $\bb g =  \prod_{\omega \in \Q{k}}^{\prec} g_{\omega 0}^{[\omega]} \in \HK^{k}(G_\bullet)$. Finally, to prove \ref{cond:HK:2b} $\Rightarrow$ \ref{cond:HK:1b}, write $(\bb e, \bb g) = \prod_{\omega \in \Q{k+1}}^{\prec} g_\omega^{[\omega]} \in \HK^{k+1}(G_\bullet)$. Because of uniqueness in Fact \ref{fact:HK:representation}, we have $g_{\omega 0} = e$ for $\omega \in \Q{k}$, so $\bb g =  \prod_{\omega \in \Q{k}}^{\prec} g_{\omega 1}^{[\omega]} \in \HK^{k}(G_{\bullet+1})$.
\end{proof}

\begin{proof}[Proof of Proposition \ref{prop:poly-FCAE-G}]
This follows by a standard modification of the proof of \cite[Proposition 6.5]{GreenTao2012}. The key point is that for each $k \geq 1$ and $(\a_\omega)_\omega \in \cFe^{[k]}$, the cube $\bb g = (f(\a_\omega))_{\omega \in \Q{k}}$ can be written as the product $\bb g = (\bb g', \bb g') (\bb e, \bb h)$, where $\bb g' = (f(\a_\omega))_{\omega \in \Q{k-1}}$ and $\bb h = (D_{\a_k} f(\a_\omega))_{\omega \in \Q{k-1}}$.

Suppose that $f \in \poly(\cFe \to G_\bullet)$, and proceed by induction on $k$ and $d$. We have $\bb g' \in \HK^{k-1}(G_\bullet)$ and $\bb h \in \HK^{k-1}(G_{\bullet+1})$, with notation as above. Hence, by \ref{fact:HK:cuts}, $(\bb g', \bb g'), (\bb e, \bb h) \in \HK^{k}(G_\bullet)$, so $\bb g \in \HK^{k}(G_\bullet)$.

Suppose conversely that $f$ maps $\cFe^{[k]} \to \HK^{k}(G_\bullet)$ for all $k$, and proceed by induction on $d$. Then, again with notation as above, $\bb g' \in \HK^{k-1}(G_\bullet)$, so $(\bb g', \bb g') \in \HK^{k}(G_\bullet)$. Hence, $(\bb e, \bb h) \in \HK^{k}(G_\bullet)$ and by another application of \ref{fact:HK:cuts}, $\bb h \in \HK^{k}(G_{\bullet+1})$. Since $D_{\a_k}f$ maps $\cFe^{[\k]}$ to $\HK^\k(G_{\bullet+1})$, by inductive assumption $D_{\a_k} f \in \poly(\cFe \to G_{\bullet+1})$. Since $\a_k$ was arbitrary, it follows that $f \in \poly(\cFe \to G_\bullet)$.
\end{proof}

\newcommand{\Qd}[1]{{ \{0,1\}^{#1}_{*} }}

\subsection{Host-Kra cubes and nilmanifolds} To deal with polynomial maps $\poly(\cFe \to G_\bullet/\Gamma)$, we introduce the following nilmanifold analogue of the Host-Kra group. For each $k$, we define $\HK^\k(G_\bullet/\Gamma) \subset (G/\Gamma)^{\Q{k}}$ to be the natural projection of $\HK^\k(G_\bullet)$. Note that here we depart from the definitions in \cite{GreenTao2010} (compare Definition E.8).

\begin{lemma} \label{lem:compactness-of-HK(G/Gamma)}
	If $G_\bullet/\Gamma$ is a nilmanifold, then for each $k$ the space $\HK^\k(G_\bullet/\Gamma)$ is compact.
\end{lemma}
\begin{proof}
By \cite[Lemma E.10]{GreenTao2010}, $\Gamma^{\Q{k}} \cap \HK^{k}(G_\bullet)$ is cocompact and discrete in $\HK^{k}(G_\bullet)$. Because the projection  $ \HK^{k}(G_\bullet) \to \HK^\k(G_\bullet/\Gamma)$ factors through the quotient $ \HK^{k}(G_\bullet) / \bra{ \Gamma^{\Q{k}} \cap \HK^{k}(G_\bullet)}$, the space $\HK^\k(G_\bullet/\Gamma)$ is compact.
\end{proof}

The Host-Kra cube group has a ``corner completion property'', stating that it is always possible to complete a partial cube with a single missing entry. Here, by $\Qd\k$ we denote the cube with missing upper corner $\Q\k \setminus \{1^{\k}\}$. Note that any face, i.e.\ a set of the form $\set{ \omega \in \Q\k }{ \omega_i = \sigma_i \text{ for } i \in I }$, can be naturally be identified with $\Q{\k - \abs{I}}$. 
Hence, if $F$ is a face then for $(g_\omega)_{\omega \in F} \subset G^{F}$ it makes sense to ask if $ (g_\omega)_\omega \in \HK^{k-\abs{I}}(G_\bullet)$ (and likewise for $G/\Gamma$ in place of $G$).

\begin{lemma}\label{lem:corner-completion-Gamma}
Let $G_{\bullet}/\Gamma$ be a length $d$ nilmanifold.
Suppose that $g_\omega \in G$, $\omega \in \Qd{k}$ are such that for any codimension $1$ face $F$ with $1^k \not \in F$, the restricted cube $(g_{\omega})_{\omega \in F}$ lies in $\HK^{[k-1]}(G_\bullet)$. Then, there exists $g_{1^k} \in G$, which completest the cube: $(g_{\omega})_{\omega \in \Q{k}} \in \HK^{k}(G_\bullet)$. 
Moreover, if $g_\omega \in \Gamma$ for all $\omega \in \Qd{k}$, then it is possible to choose $g_{1^k} \in \Gamma$.
\end{lemma}
\begin{proof}
This is essentially \cite[Lemma E.7]{GreenTao2010}.	Fix a compatible order $\prec$ of $\Q{k}$. For each lower codimension $1$ face $F = \set{\omega \in \Q{k}}{\omega_j = 0}$ (for some $j \in [k]$) we have by \ref{fact:HK:representation} a representation $(g_\omega)_{\omega \in F} = \prod^{\prec}_{\omega \in F} \tilde g_\omega^{[\omega]} |_{F}$. Because of uniqueness in   \ref{fact:HK:representation}, the coefficients $\tilde g_\omega^{[\omega]}$ do not depend of $F$. Hence, we may define $\bb g = \prod^{\prec}_{\omega \in \Q{k}} \tilde g_\omega^{[\omega]}$; this is a completion of  $(g_\omega)_{\omega \in \Qd{k}}$.
	
	For the additional part, note that if $g_\omega \in \Gamma$ for all $\omega \in \Qd{k}$, then also $\tilde g_\omega \in \Gamma$. Hence, with the construction above, $g_{1^k} = \prod_{\omega}^\prec \tilde g_{\omega} \in \Gamma$.	
\end{proof}

We are now ready to prove the analogue of Proposition \ref{prop:poly-FCAE-G} for nilmanifolds.

\begin{proposition}\label{prop:poly-FCAE}
	Let $G_{\bullet}/\Gamma$ be length $d$ nilmanifold.
	A sequence $\bar f \colon \cFe \to G/\Gamma$ is polynomial in the sense of Definition \ref{def:poly(A->G/Gamma)} if and only if $\bar f$ maps $\cFe^{[\k]}$ to $\HK^\k(G_\bullet/\Gamma)$ for each $\k$.
\end{proposition}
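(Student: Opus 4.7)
The forward direction is immediate from the definitions: if $\bar f = \pi \circ f$ with $f \in \poly(\cFe \to G_\bullet)$, applying Proposition \ref{prop:poly-FCAE-G} to $f$ and projecting coordinatewise via $\pi$ shows $\bar f(\cFe^{[\k]}) \subseteq \HK^\k(G_\bullet/\Gamma)$ by \eqref{DEF:eq:def-of-HK(G/Gamma)}.

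For the reverse direction, my plan is to inductively lift $\bar f$ to a map $f\colon\cFe\to G$ that sends every $\cFe^{[\k]}$ into $\HK^\k(G_\bullet)$; Proposition \ref{prop:poly-FCAE-G} then certifies $f\in\poly(\cFe\to G_\bullet)$, whence $\bar f = \pi \circ f$ is polynomial. Enumerate $\cFe$ as $\b_0=\emptyset,\b_1,\b_2,\dots$ with $|\b_j|$ non-decreasing in $j$, and define $f(\b_n)$ by induction, maintaining the hypothesis (IH) that $f(\b_j)\Gamma=\bar f(\b_j)$ for $j\le n$ and that every parallelepiped whose corners all lie in $\{\b_0,\dots,\b_n\}$ is lifted by $f$ into $\HK^\k(G_\bullet)$. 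The base case sets $f(\emptyset)$ to any lift of $\bar f(\emptyset)$.

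At step $n\ge 1$, write $\k=|\b_n|$ and let $P^\star$ be the canonical parallelepiped with base $\emptyset$ and singleton generators $\{i_1\},\dots,\{i_\k\}$ for $\b_n=\{i_1<\dots<i_\k\}$, so that its corners $\a^\star_\omega=\bigcup_{j:\omega_j=1}\{i_j\}$ run through all subsets of $\b_n$. The partial cube $(f(\a^\star_\omega))_{\omega\in\Qd\k}$ is defined by (IH) and has codimension-$1$ face restrictions in $\HK^{\k-1}(G_\bullet)$, so a $G$-analogue of Lemma \ref{lem:corner-completion-exist} (obtained by the same modification of \cite[E.7]{GreenTao2010} that yields Lemma \ref{lem:corner-completion-exist} itself) yields a completion $g_0\in G$ with $(f(\a^\star_\omega))_{\omega\in\Qd\k}\cup\{g_0\}\in\HK^\k(G_\bullet)$. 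Combining the hypothesis that $(\bar f(\a^\star_\omega))_{\omega\in\Q\k}\in\HK^\k(G_\bullet/\Gamma)$ with a routine inspection of the standard triangular expansion of $\HK^\k(G_\bullet)$-elements (which generalises Lemma \ref{lem:corner-completion-Gamma} beyond the case $\k=d+1$ at the cost of concluding the top value lies in $\Gamma G_\k$ rather than in $\Gamma$), the normality of $G_\k$ in $G$ then allows us to replace $g_0$ by $g_0 h$ for a suitable $h\in G_\k$ so that the resulting $f(\b_n):=g_0 h$ additionally satisfies $f(\b_n)\Gamma=\bar f(\b_n)$.

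The principal obstacle is to verify (IH) for every other parallelepiped $P=(\a_\omega)_{\omega\in\Q m}$ with upper corner $\b_n$ and lower corners in $\{\b_0,\dots,\b_{n-1}\}$, not just for $P^\star$. The plan is to exploit that the corners of any such $P$ form a sub-collection of the corners of $P^\star$: writing $\b_n=\a_0\sqcup\a_1\sqcup\cdots\sqcup\a_m$ for the decomposition of $P$, the map $\Phi\colon\Q m\to\Q\k$ defined by $(\Phi(\omega))_l=1$ if and only if $i_l\in\a_\omega$ decomposes as a composition of face and diagonal discrete cube morphisms, and satisfies $\a^\star_{\Phi(\omega)}=\a_\omega$. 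Since Host--Kra cube groups transform covariantly under such morphisms (cf.\ \cite[Appendix E]{GreenTao2010}, \cite[Proposition 6.5]{GreenTao2012}), the already-established membership $(f(\a^\star_\omega))_{\omega\in\Q\k}\in\HK^\k(G_\bullet)$ pulls back along $\Phi$ to $(f(\a_\omega))_{\omega\in\Q m}\in\HK^m(G_\bullet)$, as required. Proposition \ref{prop:poly-FCAE-G} applied to the resulting $f$ then concludes the proof.
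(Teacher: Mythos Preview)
Your proof is correct and follows essentially the same strategy as the paper's: an inductive lift via the canonical parallelepiped on the subsets of $\beta_n$, using corner completion in $G$ together with a $\Gamma G_k$-correction at the top vertex, and then reducing arbitrary parallelepipeds to the canonical one by an embedding (cube-morphism) argument. The paper packages the correction step slightly differently---it appeals to the combination of Lemmas~\ref{lem:corner-completion-exist} and~\ref{lem:corner-completion-Gamma} via the remark preceding the proposition rather than your explicit $\Gamma G_k$ triangular-expansion observation---and leaves the embedding step as the one-line ``any parallelepiped can be embedded into one of this special form,'' but the content is the same.
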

\begin{proof}

	If $\bar f = \pi \circ f$ is a polynomial in the sense of Definition \ref{def:poly(A->G/Gamma)}, then $f$ maps $\cFe^{[\k]}$ to $\HK^{\k}(G_\bullet)$, and hence $\bar f$ maps $\cFe^{[\k]}$ to the projected image, $\HK^k(G_\bullet/\Gamma)$.
	
	Suppose conversely that $\bar f$ maps $\cFe^{[\k]}$ to $\HK^{\k}(G_\bullet/\Gamma)$ for each $\k$. We aim to construct $f \in \poly(\cFe \to G_\bullet)$ such that $\bar f = \pi \circ f$. Our construction will be inductive, where $f(\a)$ is constructed only after all $f(\b)$ with $\abs{\b} < \abs{\a}$. To initiate the construction, pick arbitrary $f(\emptyset)$ with $\pi(f(\emptyset)) = \bar{f}(\emptyset)$.
	
	Suppose we want to assign a value to $f(\a)$.  Let $\a = \{a_1,a_2,\dots,a_k\}$, and write $\a_\omega = \set{ a_i }{ \omega_i = 1}$, so that $(\a_{\omega})_{\omega}$ defines a parallelepiped. Hence, we have the cube $\bar {\bb g} = (\bar{f} (\a_{\omega}))_{\omega} \in \HK^\k(G_\bullet/\Gamma)$. By definition, this cube is the projection of some cube $\bb g = (g_\omega)_{\omega} \in \HK^\k(G_\bullet)$. On the other hand, because $f(\b)$ have been constructed for $\beta \subsetneq \a$, we have the partial cube $({f} (\a_{\omega}))_{\omega \neq 1^\k}$. 
	
	Consider the (partial) cube $(g_{\omega}^{-1} f(\a_\omega))_{\omega \in \Qd{k}}$. By construction, $g_{\omega}^{-1} f(\a_\omega) \in \Gamma$ for all $\omega \in \Qd{k}$. Thus, by Lemma \ref{lem:corner-completion-Gamma}, there is some $\gamma \in \Gamma$ which completes this partial cube; that is if we define $f(\a) = f(\a_{1^k}) = g_{1^k} \gamma$, then $(g_{\omega}^{-1} f(\a_\omega))_{\omega \in \Q{k}} \in \HK^{k}(G_\bullet)$. Multiplying by $\bb g$, we conclude that $(f(\a_\omega))_{\omega \in \Q{k}} \in \HK^{k}(G_\bullet)$. 
	
This construction guarantees that $f$ maps parallelepipeds of the special form $\a_\omega = \set{ a_i }{ \omega_i = 1 }$ into the Host-Kra cube groups. It remains to see that any parallelepiped can be suitable embedded into one of this special form.

	Indeed, let $(\a_\omega)_{\omega \in \Q{k}}$ be any parallelepiped, where $\a_\omega = \a_0 \cup _{i \in \omega} \a_i$ with $\a_i$ disjoint. Let $\{a_1,\dots,a_l\}$ be all the elements of $\bigcup_{i=0}^k \a_i$, and consider the projection map $P\colon G^{\Q{l}} \to G^{\Q{k}}$, mapping $\bb g = (g_\omega)_\omega$ to $\bb h = (h_\sigma)_\sigma$ with $h_{\sigma} = g_{\alpha_\sigma}$. Once we show that $P$ maps $\HK^{l}(G_\bullet)$ to $\HK^{k}(G_\bullet)$, it will follow that $f$ maps $\cFe^{[k]}$ to $\HK^{k}(G_\bullet)$. Because $P$ preserves multiplication, it suffices to verify that it maps generators of $\HK^{l}(G_\bullet)$ to $\HK^{k}(G_\bullet)$. This follows from the observation that $P(g^{[\omega]}) = g^{[\sigma]}$, where $\sigma = \set{i \in [k]}{\a_i \cap \omega \neq \emptyset}$ has size $\leq \abs{\omega}$, and thus $P$ maps $G^{[\omega]}$ to $G^{[\sigma]} \subset \HK^{k}(G_\bullet)$.
\end{proof}

\begin{corollary}
	Theorem \ref{thm:poly-is-compact} holds.
\end{corollary}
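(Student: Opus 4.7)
The plan is to deduce the compactness of $\poly(\cFe \to G_\bullet/\Gamma)$ from the compactness of the Host--Kra nilmanifolds $\HK^\k(G_\bullet/\Gamma)$ (Lemma \ref{lem:compactness-of-HK(G/Gamma)}), together with the parallelepiped characterization of polynomiality supplied by Proposition \ref{prop:poly-FCAE}. The overall strategy is to realize $\poly(\cFe \to G_\bullet/\Gamma)$ as a closed subset of the compact product space $(G/\Gamma)^{\cFe}$.

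First, I would note that $G/\Gamma$ is compact (the standing hypothesis on $\Gamma$) and Hausdorff (since $\Gamma$ is discrete in $G$). Hence, by Tychonoff, $(G/\Gamma)^{\cFe}$ endowed with the product topology—which is exactly pointwise convergence—is compact and Hausdorff.

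Next, I would use Proposition \ref{prop:poly-FCAE} to rewrite membership in $\poly(\cFe \to G_\bullet/\Gamma)$ as
$$
\poly(\cFe \to G_\bullet/\Gamma) = \bigcap_{\k \geq 0} \bigcap_{(\a_\omega)_\omega \in \cFe^{[\k]}} \set{ \bar f \in (G/\Gamma)^{\cFe} }{ \bra{\bar f(\a_\omega)}_{\omega \in \Q\k} \in \HK^\k(G_\bullet/\Gamma) }.
$$
Each set in the intersection is the preimage of $\HK^\k(G_\bullet/\Gamma) \subset (G/\Gamma)^{\Q\k}$ under the continuous evaluation map $\bar f \mapsto (\bar f(\a_\omega))_\omega$, which depends only on finitely many coordinates. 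Since $\HK^\k(G_\bullet/\Gamma)$ is compact by Lemma \ref{lem:compactness-of-HK(G/Gamma)} and sits inside the Hausdorff space $(G/\Gamma)^{\Q\k}$, it is closed, so each of these preimages is closed in $(G/\Gamma)^{\cFe}$.

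Therefore $\poly(\cFe \to G_\bullet/\Gamma)$ is a closed subset of the compact space $(G/\Gamma)^{\cFe}$, hence compact. The only mildly subtle point is the passage from ``polynomial'' to the parallelepiped characterization, which is handled by Proposition \ref{prop:poly-FCAE}; once that is in hand, everything else is a straightforward application of Tychonoff and the fact that compact subsets of Hausdorff spaces are closed.
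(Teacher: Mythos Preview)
Your proposal is correct and follows essentially the same route as the paper: realize $\poly(\cFe \to G_\bullet/\Gamma)$ as a closed subset of the compact product $(G/\Gamma)^{\cFe}$, using Proposition~\ref{prop:poly-FCAE} to reduce closedness to the closedness of each $\HK^\k(G_\bullet/\Gamma)$, which in turn follows from Lemma~\ref{lem:compactness-of-HK(G/Gamma)}. You are in fact slightly more careful than the paper in spelling out why compactness of $\HK^\k(G_\bullet/\Gamma)$ yields closedness (via Hausdorffness of $(G/\Gamma)^{\Q\k}$).
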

\begin{proof}
	The set $\poly(\cFe \to G_\bullet/\Gamma)$ is lies in the compact space $(G/\Gamma)^{\cFe}$, so it will suffice to check that it is closed.

	Suppose that $\bar f_n \to \bar f$ pointwise. Then for each $\k$, and for each parallelepiped $(\a_{\omega})_{\omega \in \Q\k} \in \cFe^{[k]}$, the cube $( \bar f (\a_\omega))_\omega$ lies in the closure of $\HK^\k(G_\bullet/\Gamma)$. But $\HK^\k(G_\bullet/\Gamma)$ is already closed by Lemma \ref{lem:compactness-of-HK(G/Gamma)}, so we $\bar{f}$ maps $\cFe^{[k]}$ to $\HK^\k(G_\bullet/\Gamma)$, so we are done.
\end{proof}

\section{$\cS_k$-sequences}\label{sec:SEQ}

In this section, we develop some language to speak about sequences indexed by $\cS_k$, the $k$-syndetic sets. The key insight here is that $\cS_k$-indexed sequences admit a well behaved notion of a subsequence, which allows us to restrict to particularly structured sequences later on, through application of Proposition \ref{prop:structure-theorem-stable}.

\subsection{IP sets revisited} 
We briefly return to the discussion of \ip\ sets, which serve as a motivation and an analogue for the \sg{k} sets. Recall that an \ip\ set is (a superset of) a set of the form $\FS(n_i) = \set{ n_\a }{ \a \in \cF}$ for some $n_i \in \NN$, where $n_\a = \sum_{i \in \a} n_i$. 

One of the reasons for interest in the \ip\ sets is the celebrated theorem of Hindman \cite{Hindman1974}, stating that the class of \ip\ sets is \emph{partition regular}. Here, we say that a class of sets $\cC \subset \cP(\NN)$ is \emph{partition regular} if for any $A \in \cC$ and any finite partition $A = \bigcup_{i = 1}^k A_i$ there exists some $i \in [k]$ such that $A_i \in \cC$. 

Barring spurious coincidences between different terms $n_\a$ and $n_\b$ with $\a \neq \b$, the set $\FS(n_i)$ can for most intents and purposes be identified with the set $\cF$ of finite subsets of $\NN$. To make this idea more precise, recall that we endow $\cF$ with the structure of a partial semigroup, where the operation is the disjoint union. Now, the map $\a \mapsto n_\a$ is a morphism of partial semigroups. Moreover, any morphism of partial semigroups $\cF \to \NN$ takes the form $\alpha \mapsto n_\alpha = \sum_{i \in \alpha} n_i$ for some sequence $n_i$, and the \ip\ sets are precisely the images of $\cF$ in $\NN$ by such morphisms. 

This point of view makes it more convenient to speak of \ip\ subsets of a given \ip\ set. If $(\a_i)_{i=1}^\infty$ is a sequence of disjoint sets, then the map $\b \mapsto \a_\b := \bigcup_{i \in \b} \a_i$ is a morphism of partial semigroups, which is in fact an isomorphism onto the image. We will call the image of such a morphism an \emph{\ip\ ring}\footnote{Note that we only require $\a_i$ to be pairwise disjoint. A similar definition is sometimes made with a stronger condition $\max \a_i < \min \a_{i+1}$. We do not follow this approach here.} and denote it by $\FU(\a_i)$. If $f$ is an $\cF$-indexed sequence and $\FU(\a_i)$ is an \ip\ ring, it is natural to consider the ``restriction'' of $f$ to $\FU(\a_i)$, given by $\tilde f(\b) = f(\a_\b)$,where as usual $\a_\b = \bigcup_{i \in \b} \a_i$. (Note that the definition is arranged so that the domain of $\tilde f$ is again $\cF$.)

We can now reformulate Hindman's theorem as follows. Suppose that a sequence $f\colon \cF \to X$ is given, taking values in some finite set $X$. Hindman's theorem then ensures that for a suitable choice of the \ip\ ring, the corresponding subsequence $\tilde f$ is constant. Using the succinct terminology of Zorin-Kranich \cite{Zorin-Kranich2013}, any finitely valued $\cF$-sequence is \emph{wlog} constant.

Slightly more generally, Hindman's theorem is equivalent to the statement that for any sequence $f\colon \cF \to X$ taking values in a compact space $X$, there exists an \ip\ ring $\FU(\a_i)$ such that the limit $\plim{\operatorname{IP}}{\b} f(\a_\b)$ exists. (We do not define $\plim{\operatorname{IP}}{}$ here, but see e.g.\ \cite[Sec. 1]{BergelsonFurstenbergMcCutcheon1996} for details.) While this result is never directly used, nor even properly stated, in this paper, it serves as a motivation for Proposition \ref{prop:structure-theorem-stable}, which plays a key role.

\subsection{Basic definitions}\label{sec:SEQ:Definitions}
\label{sec:SEQ:Subsequences}

We wish to adapt some of the ideas relevant to \ip\ sets to the context of \sg{k} sets.

As suggested by the formulation of Question \ref{question:basic}, we will be interested in polynomial sequences $\poly(\cFe \to G_{\bullet}/\Gamma)$, but only values at $\cS_{k}$, the $k$-syndetic sets, will play a role. To emphasise this state of affairs, we will use the term \emph{ $\cS_{k}$-sequence} to refer to a $\cFe$-indexed sequence which we only intend to evaluate on $\cS_{k}$. If we consider a $\cFe$-indexed sequence with no $\cS_{k}$ in mind, we refer to it  as an $\cFe$-sequence, or simply a sequence.

\begin{remark}\label{remark:Sk-not-enough}
It is tempting to dispose of $\cFe$ altogether, and work with sequences indexed by $\cS_k$. Indeed, $\cS_{k}$ is certainly a partial semigroup, so it makes sense to consider polynomial groups such as $\poly(\cS_k \to G_{\bullet})$, and much of the discussion in this section would carry through with minor modifications.

However, we pursue a different route. One of the reason is that there does not seem to be a satisfactory analogue of Proposition \ref{prop:structure-of-poly(F->T)} for $\poly(\cS_{k} \to \TT)$. Indeed, the algebraic structure of $\cS_{k}$ is not strong enough to admit a good description of the polynomials from $\cS_{k}$. For instance, $\cS_k$ has no non-degenerate parallelepipeds of dimension $>k+1$.
\end{remark}

\subsection{Subsequences}

We will now introduce a notion of a subsequence suitable for the study of $\cS_k$-sequences. Consider a sequence $f\colon \cFe \to X$, taking values in some space $X$. As a source of motivation, recall that for any \ip\ ring $\FU(\a_i)$, we may construct a subsequence $\tilde f$ given by $\tilde f(\b) = f(\a_\b)$, where $\a_\b = \bigcup_{i \in \b} \a_i$. Suppose now that $\a_j$ are chosen so that $\a_\b$ is a $\cS_k$ for any $\b$ in $\cS_k$. Then, almost tautologically, we have $\tilde f(\cS_k) \subset f(\cS_k)$, so whenever we are interested in proving statements such as ``there exists $\a \in \cS_k$ such that $f(\a) \in U$'', we may equally well replace $f$ with $\tilde f$. 

The condition that $\a_\b$ should be $\cS_k$ whenever $\b$ is $\cS_k$ will be satisfied in the case when $\a_j$ takes the form $\a_j = \{i_j, i_j+{k}, \dots, i_{j+{k}} -{k} \}$ for an increasing sequence $(i_j)_{j =1}^\infty$ with $i_{j+k} \equiv i_j \pmod{k}$. Indeed, it suffices to check that if $j < j' \leq j+k$, then the gaps of $\a_{j} \cup \a_{j'}$ are bounded by $k$. If $j = j+k$, then $\a_{j} \cup \a_{j'}$ is just a progression with step $k$, and if $j' < j+k$ then $\a_{j} \cup \a_{j'}$ consists of two overlapping progressions with step $k$ --- in either case, the bound on the gaps is clear.

We will show that any possible choice of $\a_j$ such that $\b \mapsto \a_\b$ preserves $\cS_k$ will essentially be of the above form, with the inconsequential caveat that there is some additional freedom in the choice of $\a_1,\a_2, \dots, \a_{k+1}$. Although formally not necessary, we record the proof of this fact, lest the definition which follows be unmotivated.

\renewcommand{\i}{\iota}
\begin{lemma}\label{SEQ:obs:form-of-Sk-sseq}
	Let $k \in \NN$. Suppose that $\FU(\a_i)$ is an \ip\ ring such that $\a_\b \in \cS_k$ for any $\b \in \cS_k$. Then, there exists an increasing sequence $(i_j)_{j=1}^\infty$ with $i_{j+k} \equiv i_{j} \pmod{k}$ for all $j$, such that $\a_j = \{i_j, i_j+{k}, \dots, i_{j+{k}} -{k} \}$ for any $j > k+1$.
\end{lemma}
\begin{proof}
	The basic idea is to iterate over $\NN$ in increasing order and show that if $\a_j$ were not of the aforementioned form, then we would encounter a gap $>k$ in $\a_\b$ for some $\b \in \cS_k$.

	Let $i_*$ be such that $\max \a_{i_*}$ is minimal. For $t \geq t_* := \max \a_{i_*} + k + 1$, consider the set $\iota(t)$ of indices of $i$ such that $\a_i \cap [t-k,t) \neq \emptyset$. In particular, if $\min \a_i < t$ and $i \not \in \i(t)$, then $t \not \in \a_i$ because $\a_i \in \cS_k$. 
	
	We claim that for each $t \geq t_*$, $\i(t)$ is a connected interval of length $k$. Indeed, let $n$ be sufficiently large that $\min \a_n \geq t$, and consider the set $\b = [n] \setminus \iota(t)$. Then $\min \a_\b \leq \min \a_{i_*} < t-k$ and $\max \a_{\b} \geq \max \a_{n} > t$ and $[t-k,t)\cap \a_\b = \emptyset$, so $\a_\b \not \in \cS_k$. Hence, $\b \not \in \cS_k$. Because $\abs{\i(t)} \leq k$, the only way for $\b = [n] \setminus \i(t)$ to have gap $> k$ is if $\i(t)$ is a connected interval of length $k$, as claimed.
	
	For each $t$ and $0 \leq l \leq k$, denote by $i_l(t)$ the index such that $t-l \in \a_{i_l(t)}$ (each $t-l$ belongs to some $\a_i$ by the above claim, and $i_1(t),\dots,i_k(t)$ are all distinct). Since both $\i(t)$ and $\i(t+1)$ are intervals of length $k$, one of the following holds: (a) $i_0(t) = i_k(t)$, or (b) $i_k(t) = \min \i(t)$ and $i_0(t) = i_k(t) + k$, or (c) $i_k(t) = \max \i(t)$ and $i_0(t) = i_k(t) - k$. 
	
	We next show that (c) never actually occurs. Clearly, (b) happens infinitely often. Hence, if (c) happens at any point, then there exists $t$ such that (c) happens at $t$ and the next of (b) and (c) to happen is (b). Thus, at some $t'>t$, (b) happens, and for $t < s < t'$, (a) happens. Hence, $\i(t') = \i(t+1) = \i(t) \setminus \{ i_k(t) \} \cup \{i_k(t) - k\}$. Because $\max \i(t') = i_k(t) + k$, we have $i_0(t') = i_k(t)$ --- but this is impossible, since then $\a_{i_k(t)}$ would have gap $> k$.
	
	Note that if (a) happens then $\i(t+1) = \i(t)$, and if (b) happens then $\i(t+1) = \i(t)+1$. Each $i \neq i_*$ appears as the element of some $\i(t)$, and the only way for this to happen is if $i_* = 1$ and $\i(t_*) = \{2,3,\dots,k+1\}$.	Inspecting what happens in each of (a) and (b), we may further conclude that each of the sets $\a_i \cap [t_*, \infty)$ is a progression of step $k$, and for $i > k+1$, $\a_i \subset [t_*, \infty)$ and $\min \a_{i+1} > \min \a_{i}$. It follows that $\a_i$ are of the desired form.
\end{proof}

We are now ready to state the definition of a $\cS_l$-subsequence.

\begin{definition}
	Let $f\colon \cFe \to X$ be a $\cS_k$-sequence, $k \geq 1$. Then, a \emph{$\cS_k$}-subsequence of $f$ is any sequence $\tilde f\colon \cFe \to X$ taking the form $\tilde f(\b) = f(\a_\b)$ where $\a_j = \{i_j, i_j+{k}, \dots, i_{j+{k}} -{k} \}$ for some increasing sequence $i_j$ such that $i_{j+k} \equiv i_j \pmod{k}$, and $\a_\b := \bigcup_{i \in \b} \a_i$.
	
	Similarly, for any $l < k$, $k \geq 1$ or $k = l = 0$, a $\cS_l$-subsequence of a $\cS_k$-sequence $f$ is a sequence $\tilde f\colon \cFe \to X$ taking the form $\tilde f(\b) = f(\a_\b)$ where $\a_i \in \cF$ are pairwise disjoint and the map $\b \mapsto \a_\b := \bigcup_{i \in \b} \a_i$ takes $\cS_l$ to $\cS_k$. Note that this definition depends on both $l$ and $k$.
\end{definition}

\begin{remark}
	There is a slight mismatch in the above definition. Namely, the notion of $\cS_k$-subsequence of a $\cS_k$-sequence is not exactly the same as would be obtained by extrapolating the definition of a $\cS_l$-subsequence $(l < k)$ to  $l=k$.
	
	This could be amended by requiring, in the definition of $\cS_l$-subsequence ($l<k$), not only that $\b \mapsto \a_\b$ maps $\cS_l$ to $\cS_k$, but also that $\a_i$ can be extended to $(\a_i)_{i = - \infty}^{\infty}$, $\a_i \subset \ZZ$, in such a way that $\a_\b$ has gaps $\leq k$ if $\b$ has gaps $\leq l$ for $\b \subset \ZZ$, finite. 
	
	Although this would make the definitions more consistent, they would also become more complicated, so we instead accept the mismatch for the sake of simplicity.
\end{remark}

\begin{example}\label{SEQ:ex:S0}
	As already remarked, $\cS_0$-sequences can be identified with the usual $\NN$-indexed sequences. Under this identification, if $f \colon \cFe \to X$ is a $\cS_0$-sequence, then any subsequence $(f(\{i_j\}))_{j=1}^\infty$ (in the usual sense of the word) can be seen as corresponding to a $\cS_0$-subsequence $\tilde f$ with $\tilde f(\{j\}) = f(\{i_j\})$.
\end{example}

\begin{example}\label{SEQ:ex:rational}
	Let $f \colon \cFe \to \TT$ be a linear sequence of the special form $f(\a) = \abs{\a}/M \bmod{1}$, viewed as a $\cS_k$-sequence, $k \geq 1$. Pick the sequence $i_{l + ak} = l + M a k$ ($l \in [k],\ a \in \NN_0$), so that $\a_j = \{i_j, i_{j}+k, \dots, i_{j+k} - k\}$ are arithmetic progressions of length $M$ and step $k$. Then, $\tilde f(\b) = f(\a_\b) = 0 \in \TT$ is a $\cS_k$-subsequence of $f$. 
	Note that for $k = 1$, this amounts to splitting $\NN$ into intervals of length $M$. 
\end{example}

\begin{example}\label{SEQ:ex:linear}
	Let $f \colon \cFe \to \tt$ be a general linear sequence $f(\a) = \sum_{i \in \alpha} a_i$ for some $a_i \in \TT$. To begin with, view $f$ as a $\cS_1$-sequence. There exists a convergent subsequence of the partial sums: $\sum_{i=1}^{i_j-1} a_i \to c$ for some $i_j \to \infty$, increasing, and $c \in \TT$.
	
	Pick $\a_j = \{i_j, \dots, i_{j+1}-1\}$. Then 
	$$f(\a_j) = \sum_{i \in \a_j} a_i = \sum_{i=1}^{i_{j+1}-1} a_j - \sum_{i=1}^{i_{j}-1} a_j =: \e_j \to 0 \quad \text{as} \quad j \to \infty.$$
	
	We may now construct the $\cS_1$-subsequence $\tilde f$ of $f$ given by $\tilde f (\b) = \sum_{j \in \b} \e_j$. Note that, by suitable choice of $(i_j)_j$, we may ensure that the $\e_j$ are as small as we please. Requiring that $\norm{\e_j} < \delta 2^{-j}$ for some small $\delta > 0$, say, we may thus ensure that $\norm { \tilde f(\b) } < \delta$ for all $\b \in \cFe$.
	
	By a similar construction, it can be checked that if $f$ is considered as a $\cS_k$-sequence, we may again extract a $\cS_k$-subsequence $\tilde{f}$ taking the form $\tilde f(\b) = \sum_{j \in \b} \e_j$, where $\e_j \to 0$ rapidly as $j \to \infty$. Because the details do not include any new ideas, we skip them.
\end{example}

\begin{example}\label{SEQ:ex:S1}
	Let $f \colon \cFe \to \TT$ be any $\cS_1$-sequence. Take a large integer $M$, and consider the complete graph on $\NN$, coloured so that the edge $\{i,i'\}$ (where $i < i'$) is assigned colour $m \in [M]$ precisely when $f(\{i,i+1,\dots,i'-1\}) \in \left[ \frac{m}{M}, \frac{m+1}{M}\right) \bmod{1}$. By Ramsey's theorem, this graph has an infinite monochromatic clique $\{i_1,i_2,\dots\}$ with some colour $m$. Putting, as usual, $\a_j = \{i_j, \dots, i_{j+1}-1\}$, we obtain a $\cS_1$-subsequence $\tilde{f}(\b) = f(\a_\b)$ such that for each $\b \in \cF$, $\tilde{f}(\b) \in \left[ \frac{m}{M}, \frac{m+1}{M}\right) \bmod{1}$. In other words, the sequence $\tilde f$ is approximately constant. Note, however, that this phenomenon does not immediately generalise to $\cS_k$-sequences with $k \geq 2$.
\end{example}

\begin{example}\label{SEQ:ex:quadratic}
	Consider the degree $2$ polynomial $\cS_2$-sequence $f \colon \cFe \to \TT$ given by 
	$$f(\a) = \frac{1}{2} \abs{ \set{ \{i,j\} \subset \a}{ 0 \leq \abs{i-j} \leq 2 }} \bmod{1}.$$ 
	
	It is not difficult to check that if $\a$ is a (non-empty) arithmetic progressions with step $2$, then $f(\a) = \frac{1}{2}$. Moreover, if $\a, \a'$ are two disjoint progressions with step $2$, then $f(\a \cup \a') = f(\a) + f(\a') + \frac{1}{2} = \frac{1}{2}$ of $\a,\a'$ either overlap ($\min \a < \min \a' < \max \a < \max \a'$) or touch ($\max \a + 2 = \min \a'$); and $f(\a \cup \a') = f(\a) + f(\a') = 0$ if $\a$ and $\a'$ are distance $> 2$ apart. It follows that the only $\cS_2$-subsequence of $f$ is $f$ itself.
	
	We now show a few $\cS_1$-subsequences which can be extracted from $f$. If $\a = \{i,i+1,i+2\}$ is an interval of length $3$, then by direct computation $f(\a) = 0$. More generally, if $\a, \a'$ are two disjoint intervals of length $3$, then $f(\a \cup \a') = \frac{1}{2}$ if $\a, \a'$ are adjacent, and $= 0$ otherwise. Putting $\a_j = \{3j, 3j+1, 3j+2\}$, we thus obtain the $\cS_1$-subsequence $\tilde f(\b) = \frac{1}{2} \abs{ \set{ \{i,j\} \subset \a}{ \abs{i-j} = 1 }} \bmod{1}$; for $\b \in \cS_1$, this simplifies to $\tilde f(\b) = \frac{1}{2} (\abs{\b}-1) \bmod{1}$.
		
	A slightly more involved construction takes $\a_j = \{5j +1, 5j +3, 5j+4, 5j+5\}$. One can check by hand that $f(\a_j) = f(\a_{j} \cup \a_{j+1}) = 0$. As a consequence of these identities, the $\cS_1$-subsequence $\tilde{f}(\b) = f(\a_\b)$ is just the constant $0$ sequence: $\tilde f(\b) = 0$, $\b \in \cFe$.
\end{example}

Having discussed a number of examples of sequences and their subsequences, we turn to properties that are preserved under the operation of taking a subsequence. 

It is clear that if $g$ is a $\cS_l$-subsequence of a $\cS_k$-sequence $f$ then $g(\cS_{l}) \subset f(\cS_{k})$. Moreover, if $h$ is a $\cS_m$-subsequence of the $\cS_l$-sequence $g$, then $h$ is also a $\cS_m$-subsequence of $f$, hence the relation of being a subsequence is transitive.

Also, a $\cS_l$-subsequence of a polynomial sequence to a filtered nilpotent group is again polynomial, with respect to the same filtration, and there is an analogous statement for polynomial maps to nilmanifolds. Instances of this appear in Examples \ref{SEQ:ex:rational}, \ref{SEQ:ex:linear}, \ref{SEQ:ex:quadratic}; we record the proof below. Note that the ``linear'' sequences $\a \mapsto g^{n_\a}$ are preserved under taking subsequences (direct substitution).

\begin{proposition}\label{prop:D:SEQ:poly->poly}
	Let $G_{\bullet}$ be a filtration on nilmanifold $G$, and suppose that a $\cS_k$-sequence $f\colon \cFe \to G$ is a polynomial sequence as in Definition \ref{def:poly(A->G)}. For any $l \leq k$, if $\tilde{f}$ is a $\cS_l$-subsequence of $f$, then $\tilde{f}$ is a polynomial sequence. 
	
	Likewise, if $G_{\bullet}/\Gamma$ is a nilmanifold and $\bar f\colon \cFe \to G/\Gamma$ is a polynomial sequence as in Definition \ref{def:poly(A->G/Gamma)} then for any $l \leq k$, if $\tilde{f}$ is a $\cS_l$-subsequence of $f$, then $\tilde{f}$ is a polynomial sequence.
	
\end{proposition}
\begin{proof}
	Recall that $\tilde f(\b) = f(\a_\b)$ for some \ip\ ring $\FU(\a_i)$. Because of Proposition \ref{prop:poly-FCAE}, it suffices to check that $\tilde{f}$ maps parallelepipeds $\cFe^{[m]}$ to the corresponding Host-Kra space $\HK^{m}(G_\bullet)$. But this is clear because $\b \mapsto \a_\b$ maps $\cFe^{[m]}$ to $\cFe^{[m]}$. In fact, the last statement holds without any assumptions on $\a_i$ apart from them being disjoint.
	
	The second part of the statement can be seen either as an immediate consequence of the previous one, or proved using an analogous argument, with $\HK^{m}(G_\bullet/\Gamma)$ in place of $\HK^{m}(G_\bullet)$.
\end{proof}

\subsection{Asymptotic subsequences}
We are also interested in an asymptotic notion of a subsequence. From this point, we restrict attention to sequences taking values in compact metric spaces 
(we could work in the larger generality of compact topological spaces, but we do not need to).

\begin{definition}\label{def:Sk-subsequence}
	Let $f \colon \cFe \to X$ be a $\cS_k$-sequence taking values in a compact metric space $X$. Then, a sequence $\tilde f \colon \cFe \to X$ is said to be an \emph{asymptotic $\cS_l$-subsequence} of $f$ if it is a pointwise limit of $\cS_l$-subsequences of $f$. That is, $\tilde f$ is an asymptotic $\cS_l$-subsequence of $f$ precisely when there exist $\cS_l$-subsequences $\tilde f_n$ of $f$ such that for each $\b \in \cFe$, $\tilde f(\b) = \lim_{n \to \infty} \tilde f_n(\beta)$. 
\end{definition}

\begin{remark}\label{remark:Sk-subsequence-convergence}
	Note that in the final line of Definition \ref{def:Sk-subsequence} above, we require convergence for all $\b \in \cFe$, as opposed to only $\b \in \cS_l$; compare with Remark \ref{remark:Sk-not-enough}. However, if we have convergence for $\b \in \cS_l$, then by a diagonal argument we may always assume that $\tilde f_n(\b)$ converge also for $\b \in \cS_l$.
\end{remark}

\begin{example}
	We revisit previously mentioned examples.
	
	 The rational $\cS_1$-sequence in Example \ref{SEQ:ex:rational} has the constant $0$ sequence as an ordinary $\cS_1$-subsequence. 
	 
	 In contrast, the general linear $\cS_1$-sequence from Example \ref{SEQ:ex:linear} has the constant $0$ sequence as an asymptotic $\cS_1$-subsequence, but not an ordinary subsequence (unless the coefficients $a_\gamma$ are chosen in a specific way).
	
	Example $\ref{SEQ:ex:S1}$ shows that any $\cS_1$-sequence has an asymptotic subsequence which is constant of $\cS_1$. 
\end{example}

Remarks similar to the ones we made about subsequences apply also to asymptotic subsequences. If $\tilde f$ is an asymptotic $\cS_l$-subsequence of a $\cS_k$-sequence $f$ then $\tilde{f}(\cS_{l}) \subset \cl( f(\cS_{k}))$. The property of being a polynomial map is also preserved under taking asymptotic subsequences. 

\begin{proposition}\label{prop:D:SEQ:poly->poly-asympt}
	Let $G_{\bullet}/\Gamma$ be a nilmanifold, and suppose that a $\cS_k$-sequence $\bar f\colon \cFe \to G/\Gamma$ is a polynomial sequence as in Definition \ref{def:poly(A->G/Gamma)}. For any $l \leq k$, if $\tilde{\bar f}$ is a $\cS_l$-subsequence of $\bar f$, then $\tilde{\bar f}$ is a polynomial sequence.
\end{proposition}
\begin{proof}
	This follows immediately from Proposition \ref{prop:D:SEQ:poly->poly} combined with compactness of $\poly(\cFe \to G_\bullet/\Gamma)$ (Theorem \ref{thm:poly-is-compact}).
\end{proof}

However, we stress that ``linear'' sequences $\a \mapsto g^{n_\a}\Gamma$ are not preserved by the operation of taking asymptotic subsequences, as already seen from Example \ref{ex:quadratic-approximate-general}.

\begin{example}\label{ex:Heisenberg}
	Take $G = \begin{pmatrix} 1 & \RR & \RR \\ 0 & 1 & \RR \\ 0 & 0 & 1 \end{pmatrix}$ be $2$-step nilpotent group consisting of upper-triangular matrices, and let  $\Gamma = \begin{pmatrix} 1 & \ZZ & \ZZ \\ 0 & 1 & \ZZ \\ 0 & 0 & 1 \end{pmatrix}$ consist of the matrices in $G$ with integer entries. Then $G/\Gamma$ is a nilmanifold, known as the Heisenberg nilmanifold. Denote by $L$ the set of all sequences $\bar f \colon \cFe \to G/\Gamma$ of the form $\bar f(\a) = h^{m_\a} \Gamma$ ($h \in G,\ m_i \in \NN$).
		
	Pick $g = \begin{pmatrix} 1 & 2\theta & 0 \\ 0 & 1 & 1 \\ 0 & 0 & 1 \end{pmatrix}$, so that we may compute
	$$g^n \Gamma = \begin{pmatrix} 1 & 2 n \theta & n(n+1)\theta \\ 0 & 1 & n \\ 0 & 0 & 1\end{pmatrix} \Gamma = \begin{pmatrix} 1 & \{ 2 n \theta \} & \{ - n(n-1)\theta \} \\ 0 & 1 & 0 \\ 0 & 0 & 1\end{pmatrix} \Gamma.$$
	
	Assuming that $\theta$ is irrational, for any $\e > 0$ we may choose $n_1,n_2$ so that $\fpa{ n_1 \theta } < \e,\ \fpa{ n_1^2 \theta } < \e,\ \fpa{ n_2 \theta } < \e,\ \fpa{ n_2^2 \theta } < \e$ and $\fpa{ n_1 n_2 \theta - \psi } < \e$ for some freely chosen $\psi$ (this is an instance of Weyl's equidistribution theorem). Passing to a subsequence, we conclude that $\cl L$ (in the topology of pointwise convergence) contains a sequence $\bar f$ with $\bar f(\{1\}) = \bar f(\{2\}) = e \Gamma$ and $\bar f(\{1,2\}) = \begin{pmatrix} 1 & 0 & - 2 \psi \\ 0 & 1 & 0 \\ 0 & 0 & 1 \end{pmatrix} \Gamma$. Suppose for the sake of contradiction that $\bar f(\a) = h^{m_\a} \Gamma$. Then, $h^{m_1}, h^{m_2} \in \Gamma$, so $h^{m_1 + m_2} \in \Gamma$ and $\bar f(\{1,2\}) = e \Gamma$ --- contradiction.
\end{example}

\begin{observation}
	Among the $\cS_k$-sequences taking values in a given compact metric space $X$, the relation of being an asymptotic $\cS_k$-subsequence is transitive and reflexive. Hence, it is a partial weak order.
	
	More generally, if $f$ is a $\cS_{k}$-sequence taking values in a compact metric space $X$, $g$ is an asymptotic $\cS_l$-subsequence of $f$, and $h$ is an asymptotic $\cS_{m}$-subsequence of $g$, then $h$ is an asymptotic $\cS_m$-subsequence of $f$.
\end{observation}
\begin{proof}
	Reflexivity is direct from the definition. It suffices to take $\a_i = \{i\}$. 
	
	For transitivity, we begin with noting that it will suffice to prove that for $f,g,h$ as above, if $h$ is a (non-asymptotic) $\cS_m$-subsequence of $g$, then $h$ is also an asymptotic $\cS_m$-subsequence of $f$ (this is because the set of asymptotic $\cS_m$-subsequences is the same as the closure of the set of $\cS_m$-subsequences in $X^{\cFe}$). 
	
	Suppose that $g(\b) = \lim_{n \to \infty} f_n(\b)$ where $f_n(\b) = f(\a^{(n)}_\b)$, and $h(\c) = g(\b_\c)$. Then clearly $h(\c) = \lim_{n \to \infty} f'_n(\gamma)$, where $f'_n(\c) = f(\a^{(n)}_{\b_\c})$ is a $\cS_m$-subsequence of $f$. Hence, $h$ is an asymptotic $\cS_m$-subsequence of $f$.
\end{proof}

\begin{remark}\label{rem:subsequence-relation-not-antisymmetric}
The relation of being $\cS_k$-subsequence (let alone asymptotic $\cS_k$-subsequence) is not, however, anti-symmetric. This is shown by a simple example of a pair of $\ZZ/{k}\ZZ$-valued $\cS_k$-sequences $f(\a) = \min \a \pmod{k},\ g(\a) = \min \a+1 \pmod{k}$, which are easily seen to be $\cS_k$-subsequences of one another (put $\a_i := \{i+1\}$ and $\a'_i = \{i+k-1\}$). More generally, follows directly from Definition \ref{def:Sk-subsequence} that all $\cS_k$-subsequences of $f$ take the form $\a \mapsto \sigma(\min \a \pmod k)$, where $\sigma$ is a permutation of $\ZZ/k\ZZ$ (identified with $[k]$), and any sequence of this form is a $\cS_k$-subsequence of $f$ (take $\a_{a k + l} = \{2a k + \sigma(l), (2a+1) k + \sigma(l) \}$). Hence, it can be checked that $f$ is a $\cS_k$-subsequence of any of its $\cS_k$-subsequences.
\end{remark}

\subsection{Stable sequences} \label{sec:SEQ:Stability}
We will often find ourselves in the position of working with a $\cS_k$-sequence taking values in a compact metric space, where we may freely restrict to asymptotic $\cS_k$-subsequences. Hence, it is of interest to enquire into the possible simplest objects that can be obtained through such restrictions. This motivates the following definition.

\begin{definition}\label{def:D:SEQ:stable}
	Let $f$ be a $\cS_k$-sequence taking values in a compact metric space $X$. Then $f$ is said to be \emph{stable} if for any asymptotic $\cS_k$-subsequence $g$ of $f$, $f$ is again an asymptotic subsequence of $g$.  
\end{definition}

\begin{example}\label{ex:stable}
	Any constant sequence is automatically stable. 
	
	Example \ref{SEQ:ex:linear} shows that any linear $\cS_1$-sequence $f \colon \cFe \to \TT$ (with $f(\emptyset) = 0$) has the constant $0$ sequence as a $\cS_1$-subsequence. Hence, the only stable linear $\cS_1$-sequence is the constant $0$ sequence.
	
	On the other hand, Example \ref{SEQ:ex:quadratic} shows that there are non-trivial quadratic $\cS_2$-sequences $f \colon \cFe \to \TT$ which are stable. To extract a simpler subsequence from $f$, one has to pass to a $\cS_1$-subsequence.
\end{example}

\begin{lemma}\label{lem:stable-Sk-seq-exist}
	Let $f \colon \cFe \to X$ be a $\cS_k$-sequence taking values in a compact metric space $X$. Then, there exists an asymptotic $\cS_k$-subsequence $g$ of $f$ which is stable in the sense of Definition \ref{def:D:SEQ:stable}. 
\end{lemma}
\begin{proof}
	This is standard application of the Kuratowski-Zorn Lemma. Consider the set of all asymptotic $\cS_k$-subsequences of $f$, weakly partially ordered by the relation of being a $\cS_k$-subsequence. Clearly, any minimal element for this ordering will be a stable $\cS_k$-sequence, so it will remain to check that any chain has a 	lower bound. If $\cC$ is a chain, then by a standard diagonalising argument, we may assume that for each $\a \in \cFe$, the sequence $\{h(\a)\}_{h \in \cC}$ converges to some limit $g(\a)$. It is now routine to check that $g$ is an asymptotic $\cS_k$-subsequence of $f$.
\end{proof}

To formulate (and prove) a structural result for stable $\cS_k$-sequences, it will be useful to introduce a way of ``shuffling'' $\cS_k$-sequences. Let $\pi$ be a permutation of $[k]$, and let us extend $\pi$ to a permutation of $\NN$ by $\pi(ak + l) = \pi(l) + ak$ ($a \in \NN_0, l \in [k]$). Now, put $\a_i := \{\pi(i)\}$ and define the $\cS_k$-sequence $\tilde f_\pi$ by $\tilde f_\pi(\b) = f(\a_\b)$. In plainer terms, $\tilde f_\pi(\b)$ is obtained from $f$ by permuting $\NN$ according to $\pi$. We will call such sequence a \emph{shuffle} of $f$. Note that a shuffle of a shuffle is again a shuffle, and the original sequence is a shuffle of any of its shuffles.

\begin{remark}\label{remark:shuffles-and-subsequences}
Note that in general, the shuffle $\tilde f_\pi$ need not be a $\cS_k$-subsequence of $f$, since $\b \mapsto \a_\b$ usually does not map $\cS_k$ to $\cS_k$. However, it will later be of importance that if $g(\c)$ is a $\cS_k$-subsequence of $\tilde f_\pi$, taking the form $g(\c) = \tilde f_\pi(\b_\c)$ where $\abs{\b_j} \geq 2$ for all $j$, then $g$ \emph{is} a $\cS_k$-subsequence of $f$. Indeed, it will suffice to check that for $j,j'$ with $j < j' \leq j+k$, the set $\a_{\b_{\{j,j'\}}} = \set{ \pi(i)}{ i \in \b_j \cup \b_{j'}}$ has gaps $\leq k$. By definition, $\b_j = \{i_j, \dots, i_{j+k} - k\}$ and $\b_{j'} = \{i_{j'}, \dots, i_{j'+k} - k\}$. Since $\pi$ maps pairs of points at distance exactly $k$ apart into pairs of points exactly $k$ apart, in the case $j' = j+k$ we are done. If $j' < j+k$, then there is some $i \in \b_j$ such that $i_{j'} < i < i_{j'} + k$, so $\pi(i')$ is at distance $\leq k$ from one of $\pi(i_{j'}), \pi(i_{j'}+k)$. 
\end{remark}
  
In the sequel, we only use a special case of the following proposition, which deals with a more specific type of a subsequence.

\begin{proposition}[Structure theorem for stable sequences]
\label{prop:structure-theorem-stable}
	Let $f$ be a stable $\cS_k$-sequence taking values in a compact metric space $X$. Suppose that $g$ is an asymptotic $\cS_k$-subsequence of $f$. Then $g$ is a shuffle of $f$.
	
	In the special case when $g$	takes the form $g(\b) = f(\a_\b)$, where $\a_j = \{i_j, i_j+{k}, \dots, i_{j+{k}} -{k} \}$ for an increasing sequence $(i_j)_{j = 1}^\infty$ with $i_{j} \equiv j \pmod{k}$, we have a stronger conclusion $g = f$.
	
\end{proposition}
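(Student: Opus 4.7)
My approach is to extract a combinatorial invariant from each $\cS_k$-subsequence scheme---a permutation of residue classes modulo $k$---and then use the stability of $f$ to show that this invariant completely determines an asymptotic $\cS_k$-subsequence, identifying it as a shuffle.

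First I would unpack the structure of a $\cS_k$-to-$\cS_k$ subsequence scheme $(\alpha_j)_{j \geq 1}$, which necessarily has the form $\alpha_j = \{i_j, i_j + k, \dots, i_{j+k} - k\}$ with $(i_j)$ strictly increasing and $i_{j+k} \equiv i_j \pmod{k}$. Every element of $\alpha_j$ shares the residue $r_j = i_j \bmod k$, and since the $\alpha_j$ must be pairwise disjoint, the values $r_1, \dots, r_k$ must be a permutation $\pi$ of the residue classes modulo $k$ (the assumption $i_{j+k} \equiv i_j \pmod{k}$ makes this pattern $k$-periodic). I call $\pi$ the \emph{residue permutation} of the scheme, and note that a shuffle is exactly the case where $|\alpha_j| = 1$ for all $j$.

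Now suppose $g$ is an asymptotic $\cS_k$-subsequence of $f$, witnessed by schemes $\alpha^{(n)}$ with $g(\beta) = \lim_n f(\alpha^{(n)}_\beta)$. Since there are only finitely many permutations of $k$ residues, after passing to a subsequence in $n$ I may assume all $\alpha^{(n)}$ share the same residue permutation $\pi$. Let $h = \tilde f_\pi$ denote the corresponding shuffle, which is itself a $\cS_k$-subsequence of $f$ with residue permutation $\pi$. The plan is then to show $g = h$. For this, I invoke stability: since $g$ is an asymptotic $\cS_k$-subsequence of $f$, stability gives that $f$ is an asymptotic $\cS_k$-subsequence of $g$ via schemes $\delta^{(m)}$. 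A composition argument (using that residue permutations multiply when schemes are composed) forces each $\delta^{(m)}$ to have residue permutation $\pi^{-1}$. Running the same stability argument with $h$ in place of $g$ and diagonalizing, I obtain two realizations of $f$ as an asymptotic self-subsequence---one through $g$ and one through $h$---both with identity residue permutation.

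The main obstacle is the final step: showing that two asymptotic $\cS_k$-subsequences of a stable sequence with the same residue permutation must coincide. This reduces to verifying that on a stable $f$, any asymptotic self-subsequence with identity residue permutation acts trivially, so that it recognizes $g$ and $h$ as the same object. This requires a careful limiting/diagonal argument tracking how pointwise convergence on $\cFe$ interacts with the block structure modulo $k$ and the fact that for any fixed $\beta \in \cS_k$ the sets $\alpha^{(n)}_\beta$ and $\delta^{(m)}_\beta$ eventually respect the residue pattern dictated by $\pi$ and $\pi^{-1}$. The special case of the proposition then follows immediately: the hypothesis $i_j \equiv j \pmod{k}$ forces the residue permutation of $\alpha$ to be the identity, so $g = \tilde f_{\mathrm{id}} = f$.
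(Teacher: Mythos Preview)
Your reduction to the residue-permutation invariant is the right first move, and your target statement---that an asymptotic $\cS_k$-subsequence of a stable $f$ with identity residue permutation must equal $f$---is exactly the heart of the matter. But the argument has two genuine gaps.

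First, the ``composition argument'' that is supposed to force each $\delta^{(m)}$ to have residue permutation $\pi^{-1}$ is not justified. Stability only tells you that $f$ is an asymptotic $\cS_k$-subsequence of $g$ via \emph{some} schemes $\delta^{(m)}$; it gives no control over their residue permutations. Composing with the $\alpha^{(n)}$ yields an asymptotic self-subsequence of $f$ with residue permutation $\sigma_m \circ \pi$ (where $\sigma_m$ is the residue of $\delta^{(m)}$), but nothing forces this composite residue to be the identity---a stable sequence can be an asymptotic subsequence of itself via any residue permutation (indeed, any shuffle gives one). So this step appears circular: it assumes a form of the conclusion.

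Second, even granting the reduction, the ``main obstacle'' you flag is precisely where the real content lies, and a vague diagonal argument will not close it. The paper's proof handles this via a different device: it introduces a $k$-tuple of non-principal ultrafilters $p = (p_1,\dots,p_k)$, one per residue class, and defines $\tilde f_p(\beta)$ as an iterated $p$-limit of $f(\alpha_\beta(i))$ over the block endpoints $i_1, i_2, \dots$ (Lemma~\ref{lem:ultrafilter-restriction-lemma}). Stability allows one to replace $f$ by $\tilde f_p$. Now for any (non-asymptotic) subsequence $g(\gamma) = \tilde f_p(\beta_\gamma)$ with identity residue, one writes out the iterated ultrafilter limit and observes that the indices $i_j$ with $j \notin \{j_m\}$ do not appear in the limitand; discarding those limits leaves exactly the expression defining $\tilde f_p(\gamma)$. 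This direct computation with ultrafilter limits is what makes the ``identity-residue subsequences act trivially'' step go through, and I do not see how to replace it by a soft limiting argument.
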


We will obtain the above result as a consequence of the following more precise statement. It will be convenient to use the notion of ultrafilters and the corresponding limits; for an accessible introduction we refer the reader for instance to \cite{Bergelson2010d} or the early chapters of \cite{HindmanStrauss-book}.

An ultrafilter on $\NN$ is a collection $p \subset \cP(\NN)$ which is closed under finite intersections and taking supersets, not containing $\emptyset$, and maximal with respect to aforementioned properties.  Alternatively, one may identify an ultrafilter $p$ with a $\{0,1\}$-valued, finitely additive measure $\mu$ on $\cP(\NN)$, so that $p$ is the family of sets with measure $1$. If $p$ is an ultrafilter and $A \subset \NN$ then exactly one of $A$ and $\NN \setminus A$ belongs to $p$.

Trivial examples of ultrafilters include the \emph{principal} ultrafilters, which take the form $p_x = \set{A \in \cP(S)}{x \in S}$. Ultrafilters not of this form are called non-principal; only non-principal ultrafilters will be of use to us. Non-principal ultrafilters are known to exists, but since their construction relies on the axiom of choice in an essential way, no explicit examples can be given. 

If $f\colon \NN \to X$ is a function taking values in a compact space $X$, then there exists a unique point of $X$, denoted $\plim_{n} f(n)$, such that for each of its open neighbourhoods $U$, the preimage $\set{n \in \NN}{f(n) \in U}$ belongs to $p$. (A more general definition is possible, but not needed in our applications.)

The limits along ultrafilters share many of the familiar properties of the usual limits. For instance, if $\phi \colon X \to Y$ is a continuous map between compact spaces and $f\colon \NN \to X$, then $\plim{p}{n} \phi \circ f(n) = \phi\bra{ \plim{p}{n} f(n) }$. In particular, if $f,g \colon \NN \to X$ take values in a compact topological group, then $\plim{p}{n} f(n) \cdot g(n) = \bra{ \plim{p}{n} f(n) } \cdot \bra{ \plim{p}{n} g(n) }$.

We are now ready to use the ultrafilters to extract particularly simple asymptotic $\cS_k$-subsequences. 

\begin{lemma}[Ultrafilter restriction lemma]
\label{lem:ultrafilter-restriction-lemma}

Let $k$ be an integer, and let $p = (p_1,p_2,\dots,p_k)$ be a $k$-tuple of non-principal ultrafilters with $n \equiv j \pmod{k}$ for $p_j$-almost all $n$. For a sequence $ i = (i_j)_{j=1}^\infty$ with $i_j \equiv j \pmod{k}$, consider the intervals $\a_j(i)$ given by $$\a_j(i) = \{i_j , i_j+k, \dots, i_{j+k} -k \},$$ 
and put as usual $\a_{\b}(i) := \bigcup_{j \in \b} \a_j(i)$.

For a $\cS_k$-sequence $f$ taking values in a compact metric space $X$, define the $\cS_k$-sequence $\tilde f_p$ as
\begin{equation}
	\tilde f_p(\b) := \plim{p_1}{i_1}\ \plim{p_2}{i_2}\ \plim{p_3}{i_3} \dots \plim{p_1}{i_{k+1}} \plim{p_2}{i_{k+2}} \dots  f(\a_\b(i)).
	\label{eq:ultrafilter-restriction}
\end{equation}
(Above, the limit of $i_m$ is taken along $p_{m \bmod k}$. The limits are taken over all relevant indices $i_j$, i.e.\ those with $j = j' + l$, $j \in \b,\ 0 \leq l \leq k$; there are finitely many of those.)

Then $\tilde{f}_p $ is an asymptotic $\cS_k$-subsequence of $f$.
\end{lemma}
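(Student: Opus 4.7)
The plan is to exhibit $\tilde f_p$ as a pointwise limit of honest $\cS_k$-subsequences $\tilde f_n$ of $f$. Each such $\tilde f_n$ will have the form $\tilde f_n(\b) = f(\a_\b(i^{(n)}))$ for an increasing integer sequence $i^{(n)} = (i^{(n)}_j)_{j \ge 1}$ with $i^{(n)}_j \equiv j \pmod k$; that this does define a legitimate $\cS_k$-subsequence follows from a short check that the map $\b \mapsto \a_\b(i^{(n)})$ sends $\cS_k$ into $\cS_k$ (between consecutive blocks $\a_{j_l}(i^{(n)})$ and $\a_{j_{l+1}}(i^{(n)})$ with $j_{l+1} - j_l \le k$, the gap is $i^{(n)}_{j_{l+1}} - (i^{(n)}_{j_l+k} - k) \le k$). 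Moreover, for any fixed $\b \in \cFe$, the value $f(\a_\b(i))$ depends only on the finitely many coordinates $i_j$ with $j \in \b \cup (\b+k)$; the outer $\plim{}{}$'s in \eqref{eq:ultrafilter-restriction} over irrelevant indices are constant, and the finitely many non-trivial iterated limits exist by compactness of $X$. Hence $\tilde f_p(\b) \in X$ is well-defined.

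The core of the argument is a diagonal construction. Enumerate $\cS_k = \{\b_1, \b_2, \dots\}$. Fix $n$, put $M := \max(\b_1 \cup \dots \cup \b_n) + k$, and for each $m \le n$ decompose $\tilde f_p(\b_m)$ as an $M$-fold iterated ultrafilter limit of $h^{(m)}_0(i_1,\dots,i_M) := f(\a_{\b_m}(i))$ by setting
$$h^{(m)}_r(i_1,\dots,i_{M-r}) := \plim{p_{(M-r+1) \bmod k}}{i_{M-r+1}} h^{(m)}_{r-1}(i_1,\dots,i_{M-r+1})$$
for $r = 1, \dots, M$, so that $h^{(m)}_M = \tilde f_p(\b_m)$ is a constant. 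We choose $i^{(n)}_1 < i^{(n)}_2 < \dots < i^{(n)}_M$ inductively; at stage $j$, pick $i^{(n)}_j$ from the set
$$E_j := \bigcap_{m=1}^n \bigl\{\, i \in \NN : d\bigl(h^{(m)}_{M-j}(i^{(n)}_1,\dots,i^{(n)}_{j-1}, i),\ h^{(m)}_{M-j+1}(i^{(n)}_1,\dots,i^{(n)}_{j-1})\bigr) < \tfrac{1}{nM}\, \bigr\},$$
which is a finite intersection of $p_{j \bmod k}$-large sets, hence itself in $p_{j \bmod k}$. By non-principality of $p_{j \bmod k}$, we may further intersect with $\{i > i^{(n)}_{j-1}\} \cap \{i \equiv j \pmod k\}$ and still pick $i^{(n)}_j \in E_j$. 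Extend the sequence arbitrarily for $j > M$, respecting monotonicity and the congruence condition. Telescoping $M$ triangle inequalities then yields $d\bigl(f(\a_{\b_m}(i^{(n)})),\, \tilde f_p(\b_m)\bigr) < 1/n$ for every $m \le n$.

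Consequently, for each $\b \in \cS_k$ we have $\tilde f_n(\b) \to \tilde f_p(\b)$ as $n \to \infty$ (once $\b$ has appeared in the enumeration), so $\tilde f_p$ is an asymptotic $\cS_k$-subsequence of $f$, as required. The only genuine obstacle is organisational: one must thread a single increasing integer sequence $i^{(n)}$ through the finitely many iterated ultrafilter limits defining $\tilde f_p(\b_1), \dots, \tilde f_p(\b_n)$ simultaneously, while respecting $i^{(n)}_j \equiv j \pmod k$ and monotonicity. The device of intersecting finitely many ultrafilter-large sets at each inductive stage, together with the non-principality assumption on each $p_j$, is precisely what makes this threading possible.
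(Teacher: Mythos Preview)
Your proof is correct and follows essentially the same approach as the paper: both reduce to threading a single increasing sequence $i$ (with the congruence condition) through finitely many iterated ultrafilter limits simultaneously, using closure of each $p_{j \bmod k}$ under finite intersections to pick $i_j$ inductively. You are somewhat more explicit about the telescoping with quantitative $1/(nM)$ bounds and about invoking non-principality to secure monotonicity, but the underlying argument is the same.
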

\begin{proof}
	It will suffice to construct, for any sequence of neighbourhoods $U_\b$ of $\tilde f_p(\b)$ (with $U_\b = X$ for all but finitely many $\b$), an increasing sequence $i = (i_j)_{j=1}^\infty$ such that $f(\a_\b(i)) \in U_\b$ for each $\b$. For convenience, let $p_m := p_{m \bmod k}$ for $m > k$.

	We construct $i_j$ inductively. Unwinding the definitions of limits, for each $\b$, there exists a sequence of (families of) sets $A_\b^{(0)}$, $A_\b^{(1)}(i_1)$, $A_\b^{(2)}(i_1,i_{2}), \dots, A_\b^{(r)}(i_1,i_{2}, \dots, i_{r})$, (with $r$ dependent on $\b$) such that, firstly, for each $l$ we have $A_\b^{(l)}(i_1,i_{2},\dots,i_{l}) \in p_{l+1}$ and, secondly, if $i_{l+1} \in A_\b(i_1,i_{2},\dots,i_{l})$ for each $0 \leq l \leq r$, then $f(\a_\b(i)) \in U_\b$. 
	
	By assumption, $\set{i \in \NN}{i \equiv j \pmod{k}} \in p_j$, so without loss of generality we may suppose that for any $l$, and any $i \in A_\b^{(l)}(i_1,i_{2},\dots,i_{l})$ we have $i \equiv l+1 \pmod{p}$ and $i > i_l$. For simplicity of notation, let us put $A_\b^{(l)}(i_1,i_{2}, \dots, i_{l}) = \set{i \in \NN}{i \equiv l+1 \pmod{p}, \ i > i_l}$ for all $l > r$, so that $A_\b^{(l)}$ are defined for all $r$. 
	
	It now becomes clear how the sequence $i_j$ needs to be constructed. Let $$A^{(l)}(i_1,\dots,i_l) := \bigcap_{\b : U_\b \neq X} A^{(l)}_\b(i_1,\dots,i_l),$$ where the intersection is taken over $\b \in \cF$ with $U_\b \neq X$. Begin with taking any $i_1 \in A^{(0)}$. This can be done because $\emptyset \neq A^{(0)} \in p_1$. In general, for each $l$, take arbitrary $i_{l+1} \in A^{(l)}(i_1,i_{2},\dots,i_{l})$, which can always be done because the sets $A^{(l)}(i_1,i_{2},\dots,i_{l})$ are guaranteed to be in $p_l$, and in particular be non-empty.
\end{proof}

\begin{proof}[Lemma \ref{lem:ultrafilter-restriction-lemma} implies Proposition \ref{prop:structure-theorem-stable}.]

Pick a $k$-tuple of ultrafilters $p$ as in Lemma \ref{lem:ultrafilter-restriction-lemma}. By  Lemma \ref{lem:ultrafilter-restriction-lemma}, $\tilde f_p$ is an asymptotic $\cS_k$-subsequence of $f$, and by stability $f$ is an asymptotic $\cS_k$-subsequence of $\tilde f_p$.

Let $g$ be a (non-asymptotic) $\cS_k$-subsequence of $\tilde f_p$. We claim  that $g$ is a shuffle of $\tilde f_p$. Once this is shown, it will follow that also all asymptotic $\cS_k$-subsequences of $\tilde f_p$ are shuffles of $\tilde f_p$ (this is because there are finitely many shuffles of $\tilde{f}_p$). In particular, $f$ is a shuffle of $\tilde f_p$, and consequently all (asymptotic) $\cS_k$-subsequences of $f$ are shuffles of $f$, as needed. Hence, it remains to prove the claim.

By definition, $g$ takes the form $g(\c) = \tilde f_p (\b_\c)$. Here, the sequence $\b_m$ takes  the form $\b_m =  \{j_m,j_m+k,\dots,j_{m+k}-k\}$ where $(j_m)_{m = 1}^\infty$ is a sequence of integers and $j_m \bmod k$ depends only on $m \bmod k$. Replacing $g$ with a shuffle, we may assume that $j_m \equiv m \pmod{k}$.

For a sequence $i = (i_k)_{k=1}^\infty$, let $\a_\b(i)$ be as in the Lemma \ref{lem:ultrafilter-restriction-lemma}. We may now observe that
$$\a_{\b_m}(i) = \bigcup_{j \in \b_m} \{i_{j},\dots, i_{j+k}-k\} = \{i_{j_m},\dots,i_{j_{m+k}}-k\}
.$$
Let $i'_m = i_{j_m}$ and $\a'_{m} = \{i_m',\dots,i_{m+k}'-k\}$, and let $\a_\c'$ be defined accordingly. 
The key point is that if we disregard inconsequential indices $i_j$ with $j \neq j_m$, then the limit defining $g(\c) = \tilde f_p (\b_\c)$ becomes identical with the limit defining $\tilde f_p (\c)$. More precisely, we may write:
\begin{align*}
	g(\c) = \tilde f_p (\b_\c) 
	&= \plim{p_1}{i_1}\ \plim{p_2}{i_2} \plim{p_3}{i_3} \dots f(\a_{\b_\c}(i)) 
	\\ &= \plim{p_1}{i_1}\ \plim{p_2}{i_2} \plim{p_3}{i_3} \dots f(\a_\c'(i)) 
	\\ &= \plim{p_1}{i_1'}\ \plim{p_2}{i_2'} \plim{p_3}{i_3'} \dots f(\a_\c'(i))
	\\ &= \plim{p_1}{i_1}\ \plim{p_2}{i_2} \plim{p_3}{i_3} \dots f(\a_\c(i)) 
	= \tilde f_p (\c). 
\end{align*}

Note that if $j_m \equiv m\mod{k}$ for all $m$ to begin with, then the application of the shuffle inside the proof is unnecessary; we immediately find $g = \tilde f_p$. Applying the shuffle which takes $\tilde f_p$ to $f$, we conclude that the analogous statement holds $f$, i.e.\ all $\cS_k$-subsequences of $f$ of the special form as above are again $f$. In particular, by inspection of formula \eqref{eq:ultrafilter-restriction}, we conclude that $\tilde{f}_p = f$.
\end{proof}

\begin{corollary}\label{cor:dilation-invariance}
	Let $A$ be a \sg{k} set, and pick any $m \in \NN$. Then $A \cap m \NN$ is a \sg{k} set. Likewise, if $B$ is a \sgd{k} set, then $B \cap m \NN$ is a \sgd{k} set.
\end{corollary}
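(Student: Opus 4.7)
The strategy is to reduce both halves of the corollary to a single combinatorial construction. Given any sequence $(n_i)_{i=1}^\infty$ of positive integers and any $m \in \NN$, the plan is to produce an $\cS_k$-subsequence $(\alpha_j)_{j \ge 1}$ in the sense of \S\ref{sec:SEQ:Subsequences} such that $m \mid n_{\alpha_j}$ for every $j$. Once such an $(\alpha_j)$ is in hand, setting $n'_j := n_{\alpha_j}$ and invoking that $\beta \mapsto \alpha_\beta$ sends $\cS_k$ to $\cS_k$ (the defining property of such subsequences) gives
\[
\SG_k(n'_j) = \{ n_{\alpha_\beta} : \beta \in \cS_k \} \subset \SG_k(n_i) \cap m\NN.
\]
Thus if $A \supset \SG_k(n_i)$, then $A \cap m\NN \supset \SG_k(n'_j)$ is \sg{k}; and for the dual statement, applying the \sgd{k}\ property of $B$ to $(n'_j)$ produces $\beta \in \cS_k$ with $n_{\alpha_\beta} \in B$, so $\gamma := \alpha_\beta \in \cS_k$ witnesses $n_\gamma \in B \cap m\NN$.

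To construct $(\alpha_j)$, for each residue $r \in \{1, \ldots, k\}$ form the partial sums
\[
S^{(r)}_l := \sum_{t=0}^{l-1} n_{r+tk}, \qquad l \ge 0,
\]
along positions congruent to $r$ modulo $k$. The map $l \mapsto \bigl( S^{(1)}_l, \ldots, S^{(k)}_l \bigr)$ reduced modulo $m$ takes values in the finite set $(\ZZ/m\ZZ)^k$, so by pigeonhole some value is attained on an infinite sequence $l_0 < l_1 < l_2 < \cdots$, yielding $S^{(r)}_{l_{s+1}} \equiv S^{(r)}_{l_s} \pmod m$ for every $r$ and $s$. Define $i_j := r + l_s k$ whenever $j = sk + r$ with $r \in \{1, \ldots, k\}$ and $s \ge 0$. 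One checks directly that $(i_j)$ is strictly increasing and $i_j \equiv j \pmod k$, so $\alpha_j := \{ i_j, i_j + k, \ldots, i_{j+k} - k \}$ is an $\cS_k$-subsequence of the form introduced in \S\ref{sec:SEQ:Subsequences}. Unwinding the definitions, $\alpha_j = \{ r + tk : l_s \le t < l_{s+1} \}$, hence $n_{\alpha_j} = S^{(r)}_{l_{s+1}} - S^{(r)}_{l_s} \equiv 0 \pmod m$, as required.

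The only delicate point is the bookkeeping: verifying that $(i_j)$ is strictly increasing (which reduces to $l_{s+1} \ge l_s + 1$, automatic by construction) and matching $n_{\alpha_j}$ to a difference of two $S^{(r)}$'s with the same residue index. Beyond this, the argument is just pigeonhole combined with the $\cS_k$-subsequence formalism already in place in \S\ref{sec:SEQ:Subsequences}, so no appeal to the structure theorem for stable sequences (Proposition \ref{prop:structure-theorem-stable}) or the ultrafilter restriction lemma (Lemma \ref{lem:ultrafilter-restriction-lemma}) is strictly needed, although either could be used to phrase the selection of $(l_s)$ more abstractly.
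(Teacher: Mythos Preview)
Your argument is correct. Both your proof and the paper's establish the same key lemma---that one may pass to an $\cS_k$-subsequence $(\alpha_j)$ with $m\mid n_{\alpha_j}$ for all $j$---but by different means. The paper invokes the stable-sequence machinery: it passes to a stable $\cS_k$-subsequence of $(n_\alpha \bmod m)$ and then uses Proposition~\ref{prop:structure-theorem-stable} to conclude $n_i \equiv n_{\{i,i+k\}} \equiv 2n_i \pmod m$, forcing $n_i\equiv 0$. You instead run a direct pigeonhole on the $k$-tuple of partial sums $\bigl(S^{(1)}_l,\dots,S^{(k)}_l\bigr)\bmod m$, which is entirely elementary and avoids any appeal to stability or ultrafilters. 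Your route is more self-contained and would survive outside the paper's framework; the paper's route is terser given that the structure theorem is already available, and serves as a first illustration of how that theorem is meant to be used. The dual statement is handled identically in both proofs once the $\cS_k$-subsequence is in hand.
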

Note that similar facts for \ip\ sets are well known.
\begin{proof}
	By the definition of $A$ being  a \sg{k} set, there is a $\cS_k$-sequence $n_\a$ such that $n_\a \in A$ for $\a \in \cS_k$. Passing to a $\cS_k$-subsequence, we may assume without loss of generality that the sequence $(n_\a \bmod{m})$ is stable (note that because the target space is finite, there is no need to use asymptotic subsequences). Thus, for any $i$ we have
	$$n_{i} \equiv n_{\{i,i+k\}} \equiv n_{i} + n_{i+k} \equiv 2n_i \pmod{m},$$
	whence $n_i \equiv 0 \pmod{m}$, and more generally $n_\a \equiv 0 \pmod{m}$ for all $\a \in \cS_k$.
	
	For the second statement, it suffices to check that $B \cap m \NN \cap A \neq \emptyset$ whenever $A$ is \sg{k}. But this is clear, since $m \NN \cap A$ is \sg{k} and $B$ is \sgd{k}.
\end{proof}

\subsection{Stable polynomials}\label{sec:SEQ:polynomials}

Among all $\cFe$-sequences, we will be particularly interested in polynomial maps into the torus. Recall that an (asymptotic) subsequence of a polynomial map is again polynomial because of Proposition \ref{prop:poly-FCAE}. Hence, we may in most cases assume that the polynomial we are working with is stable by passing to a subsequence. In this situation, we can obtain the following refinement of Proposition \ref{prop:structure-of-poly(F->T)}. For a set $\gamma \in \cFe$, we define the diameter $\diam (\gamma) = \max \gamma - \min \gamma$. By a slight abuse of notation, we write $\gamma + k$ for the sumset $\set{ i + k }{ i \in \gamma}$.

\begin{proposition}[Structure of stable polynomials $\cFe \to \TT^m$]
\label{cor:structure-of-stable-poly(F->T)}
	Suppose that ${f} \colon \cFe \to \TT^m = \RR^m/\ZZ^m$ is a polynomial of degree $d \leq k$, which is a stable $\cS_k$-sequence (in the sense of Definition \ref{def:D:SEQ:stable}). Then ${f}$ admits a representation of the form:
	\begin{equation}	
		{f}(\a) = \sum_{\substack{ \c \subset \a}} a_{\c},
		\label{DEF:eq:form-of-stable-poly(F->T)}
	\end{equation}
	where $\a_{\c} \in \TT^m$ are constants, which further satisfy $a_\gamma = 0$ if $\diam (\gamma) > k$ or $\abs{\gamma} > d$,
 and are periodic in the sense that $a_{\gamma+k} = a_{\gamma}$. 
\end{proposition}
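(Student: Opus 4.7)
The plan is to start from the representation supplied by Proposition~\ref{prop:structure-of-poly(F->T)} and use stability, via the ``in particular'' clause of Proposition~\ref{prop:structure-theorem-stable}, to derive the additional structure of the coefficients. First I would extract the basic identity $\bar{f}(\b + k) = \bar{f}(\b)$ for $\b \in \cS_k$: applying Proposition~\ref{prop:structure-theorem-stable} with $i_j := j + k$ (which is strictly increasing and satisfies $i_j \equiv j \pmod{k}$), each $\a_j$ collapses to the singleton $\{j+k\}$, so that the induced $\cS_k$-subsequence is simply $\b \mapsto \bar{f}(\b + k)$; stability forces it to equal $\bar{f}$.

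Periodicity of the small-diameter coefficients follows immediately. Consider the auxiliary polynomial $\bar{g}(\a) := \bar{f}(\a + k) - \bar{f}(\a)$, which by Proposition~\ref{prop:structure-of-poly(F->T)} admits the representation $\bar{g}(\a) = \sum_{\gamma \subset \a,\, |\gamma| \leq d} (a_{\gamma+k} - a_\gamma)$ and vanishes on $\cS_k$ by the previous step. For any $\gamma_0$ with $\diam \gamma_0 \leq k$, every subset $\gamma' \subset \gamma_0$ also satisfies $\diam \gamma' \leq k$ and hence lies in $\cS_k$, so $\bar{g}(\gamma') = 0$; Möbius inversion on the subset lattice of $\gamma_0$ then yields $a_{\gamma_0 + k} - a_{\gamma_0} = 0$. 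Once we further establish that $a_\gamma = 0$ whenever $\diam \gamma > k$, the required periodicity becomes automatic in those cases too.

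The remaining task is to show $a_{\gamma_0} = 0$ whenever $\diam \gamma_0 > k$ and $|\gamma_0| \leq d$. I would proceed by induction on $\diam \gamma_0$, exploiting the full flexibility of Proposition~\ref{prop:structure-theorem-stable}. Given $\gamma_0$, engineer an increasing sequence $(i_j)$ with $i_j \equiv j \pmod{k}$ in which most $\a_j$ are singletons but a few are nontrivial arithmetic progressions of step $k$, chosen so that for a suitably small $\b \in \cS_k$ the set $\a_\b$ is a minimal $\cS_k$-hull of a translate of $\gamma_0$ --- containing $\gamma_0$ together with just enough filler elements to keep consecutive gaps at most $k$. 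Expanding the identity $\bar{f}(\a_\b) = \bar{f}(\b)$ and simplifying using the previous periodicity step (to collapse coefficients indexed by small-diameter sets differing by a multiple of $k$) and the inductive hypothesis (to kill the other coefficients indexed by large-diameter sets) should isolate $a_{\gamma_0}$ and force it to vanish.

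The principal obstacle is the final step. For a generic $\gamma_0$ whose elements span several residue classes modulo $k$ and whose gap pattern is irregular, a single subsequence typically surrounds $\gamma_0$ with several filler elements, yielding equations that involve multiple ``bad'' coefficients at once rather than cleanly isolating $a_{\gamma_0}$. Making the induction go through will therefore likely require either a combinatorial scheme supplying several independent linear relations per inductive step to decouple the bad coefficients, or a carefully matched choice of induction parameter (for instance, lexicographic on $(|\gamma_0|, \diam \gamma_0)$) together with a family of subsequence designs rich enough to solve the resulting linear system.
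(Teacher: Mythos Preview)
Your periodicity argument is fine and essentially matches the paper's (the paper extracts $\bar f(\a+k)=\bar f(\a)$ from the ultrafilter-limit representation of Lemma~\ref{lem:ultrafilter-restriction-lemma} rather than directly from Proposition~\ref{prop:structure-theorem-stable}, but the content is the same, and your M\"obius-inversion step correctly recovers $a_{\gamma+k}=a_\gamma$ for $\diam\gamma\le k$).

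The gap is precisely where you place it, and your diagnosis is accurate: inducting on $\diam\gamma_0$ (or on $(|\gamma_0|,\diam\gamma_0)$) produces at each step a linear relation that mixes several large-diameter coefficients, and you have not supplied any mechanism to decouple them. The paper sidesteps this entirely by inducting on the \emph{degree} $d$ instead, exploiting that every discrete derivative $\Delta_\b \bar f$ is a polynomial of degree $d-1$. The key maneuver, for $j_2-j_1>k$, is to introduce an auxiliary point $j_3:=j_2+k$, apply the degree-$(d{-}1)$ inductive hypothesis to $\Delta_{j_3}\bar f$ to obtain
\[
\Delta_{j_3}\bar f(\{j_1,j_2\}) = \Delta_{j_3}\bar f(\{j_1\}) + \Delta_{j_3}\bar f(\{j_2\}) - \Delta_{j_3}\bar f(\emptyset),
\]
and then invoke stability (via Proposition~\ref{prop:structure-theorem-stable}) to collapse every occurrence of $\{j_2,j_3\}$ or $\{j_3\}$ back to $\{j_2\}$; this yields $\bar f(\{j_1,j_2\})=\bar f(\{j_1\})+\bar f(\{j_2\})-\bar f(\emptyset)$ directly. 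The case with an intermediate set $\a$ between $j_1$ and $j_2$ then follows by the same argument applied to $\Delta_\a \bar f$. Passing to derivatives is what makes the induction clean: it drops the degree by one, so the inductive hypothesis gives the additivity identity outright and one never has to solve a system involving several unknown large-diameter coefficients simultaneously.
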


\begin{proof}
	It is already shown in Proposition \ref{prop:structure-of-poly(F->T)} that ${f}$ admits a representation as in \eqref{DEF:eq:form-of-stable-poly(F->T)} with $a_\gamma = 0$ if $\abs{\gamma} > d$. 
	
	The periodicity condition $\nobar f(\a) = \nobar f(\a + k)$ follows immediately from applying Proposition \ref{prop:structure-theorem-stable} to the $\cS_k$-subsequence of $f$ given by the shift: $\nobar{g}(\a) = \nobar f(\a+k)$. (Alternatively, it is also a consequence of the form of the limit in \eqref{eq:ultrafilter-restriction} and the fact that $\nobar{f} = \tilde{\nobar{f}}_p$). Since the coefficients $a_\gamma$ are uniquely determined by $\nobar f$, we also have $a_{\gamma} = a_{\gamma + k}$.
	
	For the vanishing of coefficients $a_{\gamma}$ with $\diam(\gamma) > k$, we proceed by induction on $d$, the case $d = 1$ being trivial. Note that if the claim holds for some $d$, then for any degree $d$ stable polynomial $f$ and for any $\a,\b$ with $\max \a < \min \b - k$ we have $f(\a \cup \b) = f(\a) + f(\b) - f(\emptyset)$.
	
	To prove the claim for arbitrary $d \leq k$, fix some $j_1 < j_2 - k$. By an inclusion-exclusion type  argument it will suffice to prove that for any $\a \in \cFe$ with $\a \subset (j_1,j_2)$ we have
	\begin{align}
	\sum_{\substack{ \gamma \subset \a \cup \{j_1,j_2\} \\ j_1,j_2 \in \gamma }} a_\gamma = 0.
	\label{SEQ:eq:001}
	\end{align}
	Indeed, for $\delta \subset [j_1,j_2]$ with $j_1,j_2 \in \delta$ we have the identity 
	$$a_{\delta} = \sum_{\a \subset \delta \setminus \{j_1,j_2\} } (-1)^{\abs{\delta \setminus \a}} \sum_{\substack{ \gamma \subset \a \cup \{j_1,j_2\} \\ j_1,j_2 \in \gamma }} a_\gamma,$$ so \eqref{SEQ:eq:001} implies that $a_\delta = 0$.
	
	We first consider the case $\a = \emptyset$. Let $j_3 = j_2 + k$. Since $\Delta_{j_3} f$ is a polynomial of degree $d-1$ we may write:
	$$
	\Delta_{j_3} f(\{j_1,j_2\}) = 	\Delta_{j_3} f(\{j_1\}) + \Delta_{j_3} f(\{j_2\}) - \Delta_{j_3} f(\emptyset)
	$$
	which can be rewritten in simpler terms as:
	\begin{align*}
	f(\{j_1,j_2,j_3\}) - f(\{j_1,j_2\}) 
	&= f(\{j_1,j_3\}) - f(\{j_1\}) 
	\\&+ f(\{j_2,j_3\}) - f(\{j_2\}) - f(\{j_3\}) + f(\emptyset).
	\end{align*}
	Using stability of $f$ combined with Proposition \ref{prop:structure-theorem-stable}, we may replace each occurrence of $\{j_2,j_3\}$ or $\{j_3\}$ with $\{j_2\}$. The above equation now simplifies to 
	$$f(\{j_1,j_2\}) = f(\{j_1\}) + f(\{j_2\})-f(\emptyset).$$
Writing out $f$ in coordinates from \eqref{DEF:eq:form-of-stable-poly(F->T)} and cancelling repeating terms, this gives the sought formula \eqref{SEQ:eq:001}.

	We now consider arbitrary $\a \subset (j_1,j_2)$. The map $\Delta_\a f$ is a polynomial of degree $d-1$ so by the inductive hypothesis 
	$$\Delta_\a f(\{j_1,j_2\}) = \Delta_\a f(\{j_1\}) + \Delta_\a f(\{j_2\}) - \Delta_\a f(\emptyset).$$
	In particular, we have
	\begin{align*}
		f(\a \cup \{j_1,j_2\}) 
		&= f(\a \cup \{j_1\}) + f(\a \cup \{j_2\}) 
		\\ &+ f(\{j_1,j_2\}) - f(\{j_1\}) - f(\{j_2\}) - f(\a)+ f(\emptyset).
	\end{align*}
	Using the previous step, we may simplify this to 
	$$
		f(\a \cup \{j_1,j_2\}) = f(\a \cup \{j_1\}) + f(\a \cup \{j_2\})  - f(\a),
	$$
	which is equivalent to \eqref{SEQ:eq:001}. 
\end{proof}

\section{Basic results}\label{sec:SPL}

\subsection{Abelian case}\label{sec:SPL:d=1}
We observe that with the theory developed so far, the case $d = 1$ of our main result becomes trivial. This case is already well known (compare Example \ref{SEQ:ex:linear}), but we discuss it here as a source of motivation.

Recall that any $1$-step nilmanifold is in fact a compact abelian Lie group, and hence a product of a torus $\TT^m$ and a finite abelian group. In all cases, it can be viewed as a submanifold of a (possibly higher-dimensional) torus, so we may restrict our attention to tori (see also Section \ref{sec:DEF:Connected}).

\begin{proof}[Proof of Theorem \ref{thm:B}, case $d = 1$]
	We need to check that for any linear (i.e.\ polynomial degree $1$) $\cS_1$-sequence, ${f} \colon \cFe \to \TT^m$ with $f(\emptyset) = 0$, there are points $ f(\a)$ with $\a \in \cS_1 $ arbitrarily close to $0$. In fact, we prove somewhat more, namely that the constant sequence $0$ is an asymptotic $\cS_1$-subsequence of $f$, and hence such sets $\a$ are in rich supply.

	Let $ g$ be a stable asymptotic $\cS_1$-subsequence of $ f$, as introduced in Section \ref{sec:SEQ:Stability}. By Proposition \ref{cor:structure-of-stable-poly(F->T)}, we may write $ g$ in the form:
	$$
		g(\a) = \sum_{i \in \a} a_{i} = \abs{\a} t,
	$$
	where $t = a_{1}$. Stability of $ g$ further implies:
	$$
		t = g(\{1\}) = g( \{1,2\}) = 2 t,
	$$	
	and hence $t = 0$. If follows that $g(\a) = 0$ for each $\a$, as needed. 
\end{proof}

\subsection{Case $d=2$}\label{sec:SPL:d=2}
\setcounter{step}{0}

We now move on to quadratic polynomials, which already shows some of our main ideas.

\begin{proof}[Proof of Theorem \ref{thm:B}, case $d = 2$]

It will suffice to show that whenever $\bar{f} \in \poly(\cFe \to G_\bullet/\Gamma)$ is a $\cS_2$-sequence  with $\bar f(\emptyset) = e \Gamma$, where $G_\bullet/\Gamma$ is a length $2$ nilmanifold, there are $\a \in \cS_2$ such that $\bar f(\a)$ is arbitrarily close to $e \Gamma$. Hence, we prove that any \nilbohrz{2} set is \sgd{2}.

\begin{step}[Model problem]\label{step:1@thm:case-d-2}
	Let $f\colon \cFe \to \TT^m$ be a polynomial of degree $2$ with $f(\emptyset) = 0$. Then, for any $\e > 0$, there exists $\a \in \cS_2 $ such that $\norm{f(\a)} < \e$.
\end{step}
\begin{proof}
	The form of the claim is such that we can freely restrict $f$ to an asymptotic $\cS_2$-subsequence. Hence, we may without loss of generality assume that $f$ is stable in the sense of Definition \ref{def:D:SEQ:stable}. Now, by Proposition \ref{cor:structure-of-stable-poly(F->T)}, $f$ takes the form:
$$	
	f(\a) = \sum_{\substack{ \c \subset \a }} a_{\c},
$$
	where $\a_{\c} \in \TT$ are constants, which further satisfy $a_\gamma = 0$ if $\diam(\gamma) > 2$ or $\abs{\gamma} > 2$, 
	and $a_\gamma$ are periodic in the sense that $a_{\gamma+2} = a_{\gamma}$. 
	
	Taking into account above properties of $a_\gamma$, there are only $6$ meaningful coefficients, namely $a_1, a_2, a_{12}, a_{23}, a_{13}, a_{24}$. (We omit the curly brackets to avoid obfuscating notation; hence e.g. $a_{13}\equiv a_{\{1,3\}}$.)

	We record some of the relations among the $\a_{\gamma}$, which easily follow from stability:
\begin{align}
	a_{13} &= -a_1, & a_{24} &= - a_2, &
	a_{12} + a_{23} &= 0. 
	\label{SMPL:eq:001}
\end{align}
	More exactly, these follow from the identities $f(\{1,3\}) = f(\{1\})$,  $f(\{2,4\}) = f(\{2\})$ and  $f(\{1,2,3,4\}) = f(\{1,2\})$ respectively.

	Our general strategy at this point is to start with a sufficiently \emph{generic} set $\a \in \cS_2$, and check that it can be perturbed to a set $\a'$ such that $f(\a') \simeq 0$. A set $\a$ will be highly generic if it contains any possible \emph{pattern} a large number of times.
	
	To make these ideas precise, let us say that a pattern of length $M$ is a set $\pi \in \cS_2$ such that $\pi \subset [M] = \{1,2,\dots,M\}$. We will say that $\pi$ \emph{appears in $\a$ at position $n$} if $n \equiv 0 \pmod{2}$ and $(\a - n) \cap [M] = \pi$. (Note that this condition depends not only on $\a$ and $\pi$ but also on $M$.) Every pattern will appear in \emph{some} $\a \in \cS_2$ (e.g. $\a = \pi$), but there are some patterns which appear only boundedly many times. For instance, the length $3$ pattern $\pi = \{1\}$ can appear only once in any $\a \in \cS_2$, since if $\pi$ appears at position $n$, then $n+1 = \max \a$. Thus, we say that $\a$ is $(M,N)$-generic if for any pattern $\pi$ of length $M$ one of the following is true: either $\pi$ appears at most $C = C(\pi)$ times in \emph{any} $\b \in \cS_2$, or $\pi$ appears at least $N$ times in $\a$.
	
	It is a simple but useful fact that for any $M,N$ there exist $\a \in \cS_2$ which are $(M,N)$-generic. We will discuss a related issue in more detail in Section \ref{sec:MAIN}. Note also that if $\a,\a' \in \cS_2$ are close in the sense that $\abs{\a \triangle \a'} \leq L$ for some integer $L$, then if $\a$ is $(M,N)$-generic, then also $\a'$ is $(M,N')$-generic, where $N' = N - C(L,M)$ for some constant $C$ independent of $N$. We are interested in the regime where $N \to \infty$, but $M$ takes a fairly small value. In fact, it will suffice to take $M = 5$.
	
The class of generic sets is slightly too large for our purposes, for somewhat mundane reasons. We will call a set $\a$ \emph{well-formed} if additionally $\max \a \equiv \min \a \pmod{2}$, and work mostly with well-formed $\a$. For instance, $\a = \{1,2,3,5\}$ is well-formed and $f(\a) = 3a_1 + a_2 + 2a_{13} + a_{12} + a_{23} = a_1 + a_2$, while $\b = \{1,2,3,4\}$ is not well-formed and $f(\b) = 2a_1 + 2a_2 + a_{13} + a_{24} +  2a_{12} + a_{23} = a_1+a_2+a_{12}$; the appearance of the coefficient $a_{12}$ would cause problems.
	
	Informally, we would like to consider the set of all possible values of $f(\a)$ for all highly generic and well-formed $\a$. Hence, we define $\Sigma$ to be the set of those $s \in \TT$ such that for any $\e > 0$ and any $N$ there exists $(5,N)$-generic, well-formed $\a$ such that $\norm{f(\a) - s} < \e$. Note that $\Sigma$ is closed. Secondly, we want to take $\Delta$ to be the set of the perturbations of $f(\a)$ which are always feasible if $\a$ is sufficiently generic. Precisely, we declare $t \in \Delta_0$ if and only if there exists constants $L$ and $N_0$ such that for all $N \geq N_0$ and all $N$-generic, well-formed $\a$, we can find well-formed $\a'$ with $\abs{\a \triangle \a'} \leq L$ and $f(\a') = f(\a) + t$. Finally, put $\Delta := \cl(\Delta_0)$.	We note some simple properties of $\Sigma$ and $\Delta$.
	
	 Firstly, we have $\Sigma + \Delta = \Sigma$ and $\Delta$ is a group. Because $0 \in \Delta$ and $\Sigma$ is closed, to prove the first equality it will suffice to prove the inclusion $\Sigma + \Delta_0 \subset \Sigma$. Take $s \in \Sigma$ and $t \in \Delta_0$. For any $N,\e$ we may select $(5,N)$-generic, well-formed $\a$ such that $\norm{f(\a) - s} < \e$. We may (assuming that $N$ is large enough) find $\a'$ with $\abs{\a \triangle \a'} < L$ for a constant $L$ dependent only on $t$ such that $f(\a') = f(\a) + t$. Hence, $\norm{f(\a') - (s+t)} < \e$, and $\a'$ is $N'$ generic, where $N' = N - C(L) \to \infty$ as $N \to \infty$. This finishes the proof of the first equality. A proof that $\Delta + \Delta = \Delta$ follows along similar lines. If $t,t' \in \Delta_0$ then for sufficiently generic $\a$, there are $\a',\a''$ such that $\abs{\a \triangle \a''} \leq \abs{\a \triangle \a'} + \abs{\a' \triangle \a''} \leq L(t) + L(t')$ and $f(\a'') = f(\a') + t' = f(\a) + t+t'$. To prove that $\Delta$ is a group, it remains to check that $\Delta = -\Delta$. Take $t \in \Delta$. There exists a sequence $n_j$ such that $n_j t \to 0$ as $j \to \infty$. Hence $(n_j - 1) t \to -t \in \Delta$, as needed.
	
	Secondly, we exhibit generating sets for $\Sigma$ and $\Delta$. For any well-formed $\a$, we claim that $f(\a) \in  \ZZ a_1 + \ZZ a_2$. Indeed, the analogous statement involving $a_\gamma$ for all $\gamma \in \{1,2,12,23,13,24\}$ would obviously be true. We may eliminate $a_{13}$ and $a_{24}$ using \eqref{SMPL:eq:001}. Next, we may notice that the expression for $f(\a)$ contains the same number of occurrences of $a_{12}$ and $a_{23}$, so these can also be eliminated using \eqref{SMPL:eq:001} ($f(\a) = a_1 + a_2$ when $\a$ is an interval, and removing an element from $\a$ preserves the difference between the number of times $a_{12}$ and $a_{23}$ appear in $f(\a)$; this is the only point where we use the fact that $\a$ is well-formed). Hence, $\Sigma \subset \cl( \ZZ a_1 + \ZZ a_2 )$.
	
	We next consider $\Delta$. If $\a$ is $(5,N)$-generic and $N$ is sufficiently large, then we may find an occurrence of the length $5$ pattern $\{2,4\}$. We may form $\a'$ by replacing this pattern with $\{2,3,4\}$ (i.e.\ put $\a' := \a \cup \{n+3\}$, where $n$ is the position where the original pattern appears in $\a$). An easy calculation shows that $f(\a') - f(\a) = a_{12} + a_{23} + a_{1} = a_{1}$. Hence $a_{1} \in \Delta$. By a symmetric argument, $a_{2} \in \Delta$. Thus, $\Delta \supset  \cl( \ZZ a_1 + \ZZ a_2 )$. 
	
	Combining the two inclusions, we conclude that $\Sigma = \Delta = \cl( \ZZ a_1 + \ZZ a_2 )$. This finishes the argument: Since $0 \in \Sigma$, we may find some $\a \in \cS_2$ (which incidentally is highly-generic and well-formed) such that $\norm{f(\a)} < \e$, where $\e$ is as small as we please.	
\end{proof}

\begin{remark}
	Our proof can be used to find $\cS_0$-subsequence which is identically $0$, rather than a single set. Note, however, that we are unable to get a $\cS_1$-subsequence.
\end{remark}

\begin{remark}
	A more condensed proof of Step \ref{step:1@thm:case-d-2} is possible. Let $f$ be a stable degree $2$ polynomial with coefficients $\a_\gamma$, as above. Consider the set $\a = \{1,2,\dots,6r + 2\} \setminus (\{3,9,\dots, 6r_1 - 3\} \cup \{ 6, 12, \dots, 6r_2 \})$ for some $r \geq r_1,r_2\geq 0$. Then, a short calculation shows $f(\a) = (r_1+1) a_1 + (r_2+1) a_2$. Clearly, for any $\e>0$, there are $r_1, r_2$ such that $\norm{f(\a)} \leq \e$. Unfortunately, this succinct argument does not generalise well to $d \geq 3$.
	
\end{remark}

\begin{step}[Reduction to model setting]\label{step:2@thm:case-d-2}
	Let $\bar{f} \in \poly(\cFe \to G_\bullet/\Gamma)$ be a polynomial $\cS_2$-sequence with $\bar f(\emptyset) = e \Gamma$, where $G_\bullet/\Gamma$ is a length $2$ nilmanifold. Define the nilmanifold $\tilde G_\bullet/\tilde \Gamma$ by $\tilde G_0 = \tilde G_1 = \tilde G_2 = G_2$, $\tilde G_3 = \tilde G_4 = \dots = \{e_{G}\}$, $\tilde \Gamma = \Gamma \cap G_2$. 
	
	Then, $\tilde G_\bullet /\tilde \Gamma$ is a length $2$ nilmanifold, and there exists a polynomial sequence $\bar{g} \in \poly(\cFe \to \tilde G_\bullet/\tilde \Gamma)$ such that $\iota \circ \bar{g}$ is a $\cS_2$-asymptotic subsequence of $\bar f$, where $\iota \colon \tilde G/ \tilde \Gamma \to G/\Gamma$ is the natural inclusion $g \tilde \Gamma \mapsto g \Gamma$.
\end{step}
\begin{proof}
	It is clear that $\tilde G_\bullet$ is a filtration, and that $\tilde \Gamma$ is discrete. Each quotient $\tilde G / \tilde \Gamma \cap \tilde G_j$ 
	is either $G_2/\Gamma \cap G_2$ or trivial, 
	 so $\tilde G_\bullet$ is evidently $\tilde \Gamma$-rational. The content of this step is that $\bar{f}$ has an asymptotic subsequence which takes values in $G_2 \Gamma$. (See Lemma \ref{lem:poly-on-subnilmanifold} for details.)
	
	 Let $\tilde{\bar{f}}$ be a stable asymptotic $\cS_2$-subsequence $\bar{f}$. We claim that indeed $\tilde{\bar{f}}(\a) \in G_2 \Gamma$ for all $\a \in \cFe$. By definition, $\tilde{\bar{f}}$ is the projection of some $\tilde{f} \in \poly(\cFe \to G_\bullet)$. For any disjoint $\a,\b \in \cFe$, we have $\tilde f(\a \cup \b) = \tilde f(\a) \tilde f(\b) \Delta_\b \tilde f( \a)$ with $\Delta_\b \tilde f (\a) \in G_2$. 	 
	 
	 Because $G_2$ is normal, there is a well-defined projection map $\pi \colon G/\Gamma \to G/G_2\Gamma$ given by $g \Gamma \mapsto g \Gamma G_2$. The map $\bar h = \pi \circ \tilde{\bar{f}}$ is easily verified to be a degree $1$ polynomial: 
	 $$
\bar h( \a \cup \b) 
= \tilde f(\a \cup \b) G_2 \Gamma 
= \tilde f(\a) \tilde f(\b) \Delta_\b \tilde f( \a) G_2 \Gamma 
= \tilde f(\a) \tilde f(\b) G_2 \Gamma =  \bar h(\a)  + \bar h(\b). 
$$
We also have $\bar h(\emptyset) = eG_2 \Gamma$. Moreover, $\bar h$ is a stable $\cS_2$-sequence, because $\tilde{\bar{f}}$ is stable.   Repeating the argument from the case $d=1$, we find that $\bar h(\a) =  e G_2 \Gamma $ for all $\a \in \cFe$. Thus, $\tilde{\bar{f}}$ takes values in $G_2 \Gamma $, as needed.
\end{proof}

Combining the two steps easily finishes the proof. Start with a $\cS_2$-sequence  $\bar{f} \in \poly(\cFe \to G_\bullet/\Gamma)$ with $\bar f(\emptyset) = e \Gamma$, where $G_\bullet/\Gamma$ is a length $2$ nilmanifold. Let $\bar{g}$ be the asymptotic $\cS_2$-subsequence constructed Step \ref{step:2@thm:case-d-2}. Now, $\bar{g} \in \poly(\cFe \to \tilde G_\bullet/\tilde \Gamma)$, where $\tilde G_0 = G_2$ is abelian. We may without loss of generality suppose that $\tilde{G_\bullet}/\tilde \Gamma$ is a torus, equipped with the standard filtration of length $2$.

Applying Step \ref{step:1@thm:case-d-2} to $\bar{g}$, we find that $e \tilde \Gamma \in \cl{\set{ \bar{g}(\a)}{ \a \in \cS_2  }}$, whence $e \Gamma \in \cl{\set{ \bar{f}(\a) }{ \a \in \cS_2  }}$.
\end{proof}

\section{Main results}\label{sec:MAIN}

\subsection{Robust version and induction}\label{sec:MAIN:robust}
We now approach the proof of Theorem \ref{thm:B} for general $d \geq 1$. To begin with, we state a more robust version, which is better suited for an inductive proof.

\begin{theorem}[\ref{thm:B}, robust version]\label{thm:main-robust}
	Let $G_{\bullet}/\Gamma$ be a nilmanifold of length $d$, and let $\bar{f} \in \poly(\cFe \to G_{\bullet}/\Gamma)$ with $\bar f(\emptyset) = e \Gamma$.  Let $0 \leq k \leq 4d$ be an integer, and put $l = k - 4d$. Then, there exists a asymptotic $\cS_l$-subsequence of $\bar{f}$ which is constantly equal to $e \Gamma$.
\end{theorem}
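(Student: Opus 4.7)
The plan is to induct on $d$, following the two-step pattern of Section \ref{sec:SPL:d=2}. The base cases $d = 0$ (trivial) and $d = 1$ (Section \ref{sec:SPL:d=1}) are already in hand. For the inductive step from $d-1$ to $d$, I would first carry out the analogue of Step \ref{step:2@thm:case-d-2}. Because $G_d$ is normal in $G$ (in fact central, since $[G,G_d] \subset G_{d+1} = \{e\}$) and abelian, the quotient $G/G_d$ carries an induced filtration of length $d-1$ which is $(\Gamma G_d/G_d)$-rational, and the projected polynomial $\pi \circ \bar f$ is a polynomial on this $(d-1)$-step nilmanifold vanishing at $\emptyset$. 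Applying the inductive hypothesis (absorbing $\binom{d}{2}$ units of budget) and then passing to a further stable asymptotic subsequence, I reduce to the situation where the restricted $\bar f$ takes values entirely in $G_d \Gamma/\Gamma$, i.e.\ a polynomial of degree $\leq d$ into the torus $G_d/(G_d \cap \Gamma) \cong \TT^m$.

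At this point I face the abelian model problem, for which I would spend the remaining $d$ units in the manner of Step \ref{step:1@thm:case-d-2}. Pass to a stable asymptotic $\cS_{k'}$-subsequence (with $k' = k - \binom{d}{2}$) and invoke Corollary \ref{cor:structure-of-stable-poly(F->T)} to write $f(\a) = \sum_{\c \subset \a} a_\c$, where $a_\gamma = 0$ unless $\abs{\gamma} \leq d$ and $\diam(\gamma) \leq k'$, and where the coefficients are $k'$-periodic. Then, in analogy with the $d=2$ argument, define $\Sigma$ to be the closure of values $f(\a)$ over sufficiently $(M,N)$-generic well-formed $\a$, and $\Delta$ to be the closure of the achievable pattern-substitution perturbations. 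The formal identities $\Sigma + \Delta = \Sigma$ and $\Delta + \Delta = \Delta$ go through by the same soft argument, so $\Delta$ is a closed subgroup and $\Sigma$ is a union of its cosets. It then remains to verify the two inclusions $\Sigma \subset \cl(\sum_\gamma \ZZ a_\gamma)$ (using the stability relations analogous to \eqref{SMPL:eq:001} to eliminate coefficients appearing in matched pairs over well-formed $\a$) and $\Delta \supset \cl(\sum_\gamma \ZZ a_\gamma)$ (by exhibiting, for each relevant $\gamma$, an explicit pattern of bounded length whose substitution adds precisely $a_\gamma$ modulo already-achievable adjustments). Since $0 \in \Sigma$, this delivers $\a \in \cS_{k'}$ with $f(\a)$ arbitrarily close to $0$, and in fact an asymptotic $\cS_l$-subsequence constantly equal to $0$ with $l = k' - d = k - \binom{d+1}{2}$.

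The hard part will be the combinatorial perturbation step in the abelian setting for general $d$. In the $d=2$ analysis there are only six inequivalent coefficients and the two stability relations \eqref{SMPL:eq:001} permit an explicit case analysis; for general $d$ the number of coefficients $a_\gamma$ with $\diam(\gamma) \leq k'$ and $\abs{\gamma} \leq d$ grows, and the stability relations that eliminate matched pairs become more intricate. I expect one must organise the analysis by an inner induction on $\abs{\gamma}$: assuming all coefficients of size $< j$ already lie in $\Delta$, one uses a pattern of length $O(d)$ to produce a perturbation that, modulo a $\Delta$-element of smaller support, equals a single $a_\gamma$ with $\abs{\gamma} = j$. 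The quantitative reason $d$ units of syndeticity budget suffice at this stage is precisely that the patterns required have length $O(d)$, which telescoped across the outer induction on $d$ gives the claimed total $d + \binom{d}{2} = \binom{d+1}{2}$. The careful interplay between the diameter bound $\diam(\gamma) \leq k'$, the $k'$-periodicity of coefficients, and the preservation of $(M,N)$-genericity under substitution is where the main technical work will reside.
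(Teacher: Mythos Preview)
Your inductive skeleton is sound and close in spirit to the paper's argument. The one genuine difference is the direction of the reduction: you peel off $G_d$ from the bottom (reducing the nilpotency class and invoking the inductive hypothesis on $G/G_d$), whereas the paper peels off $G/G_r$ from the top (where $r$ is the first index with $G_r \subsetneq G_0$), solving an abelian problem of degree $r-1$ and then recursing inside the subgroup $G_r$ via Lemma~\ref{lem:poly-on-subnilmanifold}. Both routes lead to the same abelian model problem and the same $O(d^2)$ total budget; yours is arguably the cleaner induction.

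There is, however, a real gap in your abelian step. You propose to define $\Sigma,\Delta \subset \TT^m$ exactly as in the $d=2$ case of Section~\ref{sec:SPL:d=2}, tracking the single value $f(\a)$. That argument yields one $\a$ with $f(\a)$ near $0$ (and, via periodicity, a $\cS_0$-subsequence), but it does \emph{not} produce an asymptotic $\cS_l$-subsequence with $l = k' - d > 0$. Your induction needs the full subsequence conclusion at every stage: the reduction to $G_d\Gamma$ consumes the robust $(d-1)$-statement, so if the abelian step only returns a single point the induction fails to close. The paper handles this in Section~\ref{sec:MOD} by working not with a single generic $\a$ but with an entire well-formed $\IP$-ring $(\a_i)$, and by taking $\Sigma,\Delta$ as subsets of $\TT^{\cFe}$ recording all the values $f(\a_\xi)$ simultaneously (equivalently, the finitely many ``structure coefficients'' $c_\kappa$). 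The perturbation claims (the analogues of your two inclusions) must then be proved at this finer level: Claim~\ref{MOD:claim:1} shows that for each pair $(\kappa,\gamma)$ with $|\gamma|\geq|\kappa|$ one can perturb so as to move only the $\kappa$-coordinate (up to higher-order terms), and Claim~\ref{MOD:claim:2} bootstraps this by Gaussian elimination. Your sketch of ``inner induction on $|\gamma|$'' is pointing in the right direction, but the objects $\Sigma,\Delta$ must live in $\TT^{\cFe}$, not $\TT^m$, for the argument to deliver a subsequence rather than a single set.
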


Note that the strong version of Theorem \ref{thm:B}, \ref{thm:B-poly(F->G/Gamma)}, follows directly from the above statement by putting $k = 4d$. 

We will prove the above robust version of our main theorem by induction on the complexity of $G_{\bullet}/\Gamma$. The following lemma gives a useful description of polynomial sequences taking values on subnilmanifolds. We will often use this identification implicitly.

\begin{lemma}\label{lem:poly-on-subnilmanifold}
	Let $G_\bullet/\Gamma$ be a prenilmanifold of length $d$, and let $r \leq d$. Let the filtration $\tilde G_\bullet$ on $\tilde{G} = G_r$ be given by $\tilde G_i = G_r $ for $i \leq r$ and $\tilde G_i = G_i$ for $i \geq r$, and let $\tilde \Gamma := G_r \cap \Gamma$. Then, there is a natural bijective correspondence between maps $\bar f \in \poly(\cFe \to G_\bullet/\Gamma)$ taking values in $G_r \Gamma = \set{g \Gamma }{ g \in G_r} \subset G/\Gamma$ on one side and polynomial maps $\bar h \in \poly(\cFe \to \tilde G_\bullet/\tilde \Gamma)$ on the other side. More precisely, $\bar{h}$ corresponds to $\bar f = \iota \circ \bar{h}$, where $\iota\colon G_r/(G_r \cap \Gamma) \to G/\Gamma$ is the natural inclusion given by $g (G_r \cap \Gamma) \mapsto g \Gamma$. 
\end{lemma}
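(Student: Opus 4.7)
\emph{Proof plan.} The strategy is to verify the set-up, reduce the lemma to an identity of Host--Kra cube groups, and then conclude via the parallelepiped characterization of polynomial maps (Proposition \ref{prop:poly-FCAE}).

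First come routine verifications. The nesting $[\tilde G_i,\tilde G_j]\subseteq \tilde G_{i+j}$ is checked by splitting into the cases $i,j\leq r$, $i\leq r\leq j$, and $i,j\geq r$, in each reducing to the corresponding nesting for $G_\bullet$ together with the obvious containment $G_{2r}\subseteq G_r$. The filtration $\tilde G_\bullet$ has length at most $d$ since $\tilde G_{d+1}=G_{d+1}=\{e\}$. The intersection $\tilde G_i\cap\tilde\Gamma = G_{\max(i,r)}\cap\Gamma$ is cocompact in $\tilde G_i = G_{\max(i,r)}$ by the $\Gamma$-rationality of $G_\bullet$, so $\tilde G_\bullet$ is $\tilde\Gamma$-rational. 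Finally, $\iota$ is injective because $g\Gamma=g'\Gamma$ with $g,g'\in G_r$ forces $g^{-1}g'\in G_r\cap\Gamma=\tilde\Gamma$, whence $\iota$ gives a homeomorphism of $G_r/\tilde\Gamma$ onto the subspace $G_r\Gamma/\Gamma$ of $G/\Gamma$.

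The key structural step is the identity
\begin{equation*}
	\HK^\k(\tilde G_\bullet) \;=\; \HK^\k(G_\bullet) \cap G_r^{\Q\k}.
\end{equation*}
The inclusion $\subseteq$ follows from generators: each $g^{[\omega]}$ with $g\in\tilde G_{|\omega|}\subseteq G_r$ lies in $\HK^\k(G_\bullet)$ (since $\tilde G_i\subseteq G_i$ for every $i$) and in $G_r^{\Q\k}$. For $\supseteq$, take $\bb g\in\HK^\k(G_\bullet)$ with every entry in $G_r$ and use the unique expansion $\bb g = \prod_i g_{\omega_i}^{[\omega_i]}$ with $g_{\omega_i}\in G_{|\omega_i|}$ recalled just before Proposition \ref{prop:poly-FCAE-G}. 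Solving for $g_\omega$ inductively along the partial order on $\Q\k$ (so that $g_\omega$ is the terminal factor in $\prod_{\omega_j\leq\omega}g_{\omega_j} = \bb g_\omega$), the inductive hypothesis places the prefix product in $G_r$; hence $g_\omega\in G_r\cap G_{|\omega|}=\tilde G_{|\omega|}$, placing $\bb g$ in $\HK^\k(\tilde G_\bullet)$.

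With the identity in hand, I would apply Proposition \ref{prop:poly-FCAE}. The direction ``$\bar h\in\poly(\cFe\to\tilde G_\bullet/\tilde\Gamma)$ implies $\iota\circ\bar h\in\poly(\cFe\to G_\bullet/\Gamma)$'' is immediate: any $\tilde G_\bullet$-polynomial lift $h$ of $\bar h$ sends parallelepipeds into $\HK^\k(\tilde G_\bullet)\subseteq\HK^\k(G_\bullet)$, hence is $G_\bullet$-polynomial by Proposition \ref{prop:poly-FCAE-G}, and $\pi_G\circ h = \iota\circ\bar h$. For the converse, given $\bar f\in\poly(\cFe\to G_\bullet/\Gamma)$ valued in $G_r\Gamma/\Gamma$, set $\bar h:=\iota^{-1}\circ\bar f$ and verify that each parallelepiped cube $(\bar f(\a_\omega))_\omega\in\HK^\k(G_\bullet/\Gamma)$ lifts to some $\bb g\in\HK^\k(G_\bullet)\cap G_r^{\Q\k}=\HK^\k(\tilde G_\bullet)$, which then descends to a cube in $\HK^\k(\tilde G_\bullet/\tilde\Gamma)$ representing $(\bar h(\a_\omega))_\omega$. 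Starting from an arbitrary lift $\bb g\in\HK^\k(G_\bullet)$ with entries in $G_r\Gamma$, I would inductively right-multiply by correction factors $(\gamma_{\omega_i}^{[\omega_i]})^{-1}$ with $\gamma_{\omega_i}\in G_{|\omega_i|}\cap\Gamma$, the factor being chosen using the splitting $G_{|\omega|}\cap G_r\Gamma = \tilde G_{|\omega|}\cdot(G_{|\omega|}\cap\Gamma)$. The non-trivial case is $|\omega|<r$: here $G_r\subseteq G_{|\omega|}$ forces any $\Gamma$-factor $\gamma = g_r^{-1}\bb g_\omega$ to lie in $G_r\cdot G_{|\omega|}=G_{|\omega|}$, so indeed $\gamma\in G_{|\omega|}\cap\Gamma$.

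The main obstacle is this last inductive correction: one must check that after each right-multiplication the invariant ``every entry lies in $G_r\Gamma$'' is preserved, and that the non-commutative reorderings of the unique expansion are absorbed into the unprocessed coefficients $g_{\omega_j}$ with $j>i$ without taking them outside $G_{|\omega_j|}$. Both points follow from the filtration property $[G_i,G_j]\subseteq G_{i+j}$ combined with the splitting above, yielding the desired cube in $\HK^\k(\tilde G_\bullet)$ and completing the correspondence.
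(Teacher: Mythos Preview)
Your proof is correct and follows essentially the same route as the paper's: both arguments reduce to showing that a cube $\bb g\in\HK^\k(G_\bullet)$ with all entries in $G_r\Gamma$ can be right-multiplied by an element of $\HK^\k(G_\bullet)\cap\Gamma^{\Q\k}$ so as to land in $G_r^{\Q\k}$, via an inductive correction of the factors $\tilde g_{\omega}$ using the splitting $G_{|\omega|}\cap G_r\Gamma=\tilde G_{|\omega|}\cdot(G_{|\omega|}\cap\Gamma)$. The one organizational difference is that you isolate and prove the clean identity $\HK^\k(\tilde G_\bullet)=\HK^\k(G_\bullet)\cap G_r^{\Q\k}$ as a separate step, whereas the paper proceeds directly to the inductive correction without naming this intermediate fact; you are also more explicit than the paper about why the commutator reorderings do not disturb already-processed coefficients, a point the paper sweeps under its ``without loss of generality, it will suffice to deal with the final step''.
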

\begin{proof}

	Given $\bar h \in \poly(\cFe \to \tilde G_\bullet/\tilde \Gamma)$, we may lift it to $h \in \poly(\cFe \to \tilde G_\bullet)$ so that $\bar h (\a) = h(\a) \tilde \Gamma$. Since $\tilde G_0 = G_r \subset G_0$, there is a natural way to consider $h$ as a map to $G_0$, and under this identification $h \in \poly(\cFe \to G_\bullet)$. Thus, we may take $\bar{f} \in \poly(\cFe \to G_\bullet/\Gamma)$ given by $\bar{f}(\a) := h(\a) \Gamma$. It is clear that $\bar{f}$ takes values in $G_r\Gamma$, and $\bar f = \iota \circ \bar h$.
	
	For the other direction, assume that $\bar f \in \poly(\cFe \to G_\bullet/\Gamma)$ taking values in $G_r \Gamma$ is given. For each $\a \in \cFe$, there is unique $\bar{h}(\a) \in G_r/(G_r \cap \Gamma)$ such that $\iota(\bar{h}(\a)) = \bar{f}(\a)$: one can take arbitrary $g_r \in G_r$ such that $\bar{f}(\a) = g_r \Gamma$ and put $\bar{h}(\a) = g_r (G_r \cap \Gamma)$ (it is easy to see that $g_r$ is unique up to multiplication by and element of $G_r\cap \Gamma$). Hence, we have a map $\bar h \colon \cFe \to G_r/(G_r \cap \Gamma)$, and it remains to check that $\bar h$ is a polynomial. 
	
	Take any parallelepiped $(\a_\omega)_{\omega \in \Q{k}} \in \cFe^{[k]}$. Let us consider the cubes $\bar{\bb h} = (\bar h(\a_\omega))_\omega$ and $\bar{\bb g} = (\bar f(\a_\omega))_\omega \in \HK^{k}(G_\bullet/\Gamma)$. We claim that $\bar{ \bb g} $ can be lifted to a cube $\bb g = (g_\omega)_\omega \in \HK^{k}(G_\bullet)$ so that $g_\omega \in G_r$ for all $\omega$. Once this is accomplished, we can construe $\bb g$ as an element of $\HK^{k}(\tilde G_\bullet)$, so that $\bar{ \bb h}$ is the projection of $\bb g$, and in particular $\bar{ \bb h} \in \HK^{k}(\tilde G_\bullet/\tilde \Gamma)$.
	
	Begin with any lift $\bb g \in \HK^{k}(G_\bullet)$ of $\bar{\bb g}$, and recall that $\bb g$ can be written as
	$$
		\bb g = \prod_{\omega \in \Q{k}}^{\prec} \tilde g_{\omega}^{[\omega]} 
		= \tilde g_{\omega_1}^{[\omega_1]}  \tilde g_{\omega_2}^{[\omega_2]} \dots \tilde g_{\omega_{2^k}}^{[\omega_{2^k}]},
	$$
	where the product is taken in some fixed order compatible with the order induced by inclusion. Note that we are still free replace $\bb g$ by any element of $\HK^{k}(G_\bullet) \cap \Gamma^{\Q{k}}$ (by a slight abuse of notation, we retain the same symbol $\bb g$). We will inductively alter $\bb g$ in this way, so that $\tilde g_\omega \in G_r$, where we consider $\omega$ in the order of increasing size $\abs{\omega}$. Supposing that $g_\sigma \in G_r$ for $\sigma \prec \omega$, we may expand
	$$
		g_\omega = \prod_{\sigma \preceq \omega}^{\prec} \tilde{g}_{\sigma} 
		= \bra{ \prod_{\sigma \prec \omega}^{\prec} \tilde{g}_{\sigma} } \tilde g_\omega 
		\in G_r \tilde g_\sigma.
	$$
	If $\abs{\omega} \geq r$, then we automatically have $\tilde{g}_\omega \in G_{\abs{\omega}} \subset G_r$, whence also $g_\omega \in G_r$, so we are done. Otherwise, $\abs{\omega} < r$, and $g_\omega \in G_r \Gamma \cap G_{\abs{\omega}}$. Writing $g_{\omega} = g_{r}\gamma$ with $g_{r} \in G_r$, $\gamma \in \Gamma$, we have $\gamma \in \Gamma \cap G_{\abs{\omega}}$. After multiplying $\bb g$ by the inverse of $\gamma^{[\omega]} \in \HK^{k}(G_\bullet)$, we may assume that $\gamma = e$. (Note that this does not alter $\tilde{g}_\sigma$ unless $\sigma \succeq \omega$). Hence, we obtain $g_{\omega} \in G_r$ as desired.
\end{proof}

We will deduce Theorem \ref{thm:main-robust} from the following inductive step.

\begin{proposition}\label{prop:main-inductive-step}
	Let $\bar{f} \in \poly(\cF \to G_\bullet/\Gamma)$, where $G_\bullet/\Gamma$ is a nilmanifold of length $d$, and let $k \geq 0$ be an integer. Let $r$ be the first index such that $G_{r+1} \subsetneq G_0$. Put $l = k-2r$, and suppose that $l \geq 0$.
	
	Then, there exists an asymptotic $\cS_l$-subsequence $\bar{g}$ of $\bar{f}$ such that $g$ takes values in $G_{2r} \Gamma \simeq G_{2r}/(G_{2r} \cap \Gamma)$.
\end{proposition}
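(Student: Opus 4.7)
The strategy is to project to a torus quotient and reduce to a lower-complexity polynomial problem. The key structural observation is that since $r$ is the first index where the filtration strictly drops, we have $G_0 = G_1 = \cdots = G_{r-1}$, and the nesting condition $[G_1, G_{r-1}] \subseteq G_r$ forces $[G,G] \subseteq G_r$. Hence $G/G_r$ is abelian, and by $\Gamma$-rationality of $G_\bullet$ the subgroup $G_r\Gamma$ is closed and cocompact in $G$, making $G/(G_r\Gamma)$ a compact abelian Lie group, that is, a torus $\TT^m$.

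First I would pass to a stable asymptotic $\cS_k$-subsequence $\tilde f$ of $\bar f$, via Kuratowski--Zorn combined with Theorem \ref{thm:poly-is-compact}. Let $\bar\pi\colon G/\Gamma \to G/(G_r\Gamma)$ be the induced projection and set $\tilde h := \bar\pi \circ \tilde f$. Then $\tilde h\colon \cFe \to \TT^m$ is a stable polynomial of degree at most $r - 1$ (since the induced filtration on $G/G_r$ has length $r - 1$), with $\tilde h(\emptyset) = 0$. Corollary \ref{cor:structure-of-stable-poly(F->T)} gives the explicit representation $\tilde h(\a) = \sum_{\c \subset \a} a_\c$ with $a_\c = 0$ unless $\abs{\c} \le r - 1$ and $\diam \c \le k$, and with $a_{\c + k} = a_\c$.

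The remaining task is to produce an asymptotic $\cS_l$-subsequence of $\tilde h$ that is identically zero, for $l = k - r$. The corresponding asymptotic $\cS_l$-subsequence of $\tilde f$ then takes values in the fiber $\bar\pi^{-1}(eG_r\Gamma) = G_r\Gamma$, and by Lemma \ref{lem:poly-on-subnilmanifold} it is canonically polynomial into $G_r/(G_r \cap \Gamma)$, as required. To construct the desired subsequence of $\tilde h$ I would adapt the model argument of Step 1 in Section \ref{sec:SPL:d=2} from degree $2$ to arbitrary degree $r - 1$: introduce the set $\Sigma \subset \TT^m$ of values attained by $\tilde h(\a)$ on highly generic, well-formed $\a \in \cS_k$, and the subgroup $\Delta \subset \TT^m$ of perturbations $\tilde h(\a') - \tilde h(\a)$ achievable by bounded local modifications of such $\a$; then show that $\Sigma$ is $\Delta$-invariant and $\Delta$ is a subgroup, exhibit explicit generators on both sides via insertions and deletions of short patterns of length comparable to $k$, and conclude that $0 \in \Sigma = \Delta$. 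The slack of $k - r$ gaps permits a diagonalization-style construction that yields not merely a single good $\a$ but an entire $\cS_l$-indexed family along which $\tilde h$ tends to zero.

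The principal obstacle is the torus model computation in the general degree-$(r-1)$ setting: the top-degree coefficients $a_\c$ with $\abs{\c} = r - 1$ and $\diam \c = k$ carry the essential content, and exhibiting enough local perturbations---within the gap budget afforded by $l = k - r$ and respecting the $k$-periodicity of the $a_\c$---to generate the subgroup they lie in is the crux of the argument. Once this is accomplished, lifting the $\cS_l$-subsequence from $\tilde h$ back to $\tilde f$ via Theorem \ref{thm:poly-is-compact} together with Lemma \ref{lem:poly-on-subnilmanifold} is routine.
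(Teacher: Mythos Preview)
Your approach is essentially the same as the paper's: project to the abelian quotient $G/(G_r\Gamma)$, note that the resulting map is a polynomial of degree at most $r-1$, and then produce an asymptotic $\cS_{k-r}$-subsequence on which this projection vanishes. The paper packages this last step as a separate black box, Proposition~\ref{prop:main-abelian}, whose proof in Section~\ref{sec:MOD} is precisely the $\Sigma/\Delta$ perturbation argument you sketch; one minor correction is that $G/(G_r\Gamma)$ need not be a torus but may have a finite abelian factor, which is harmless since finite abelian groups embed in tori.
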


\begin{proof}[Proof of Theorem \ref{thm:main-robust}, assuming Proposition \ref{prop:main-inductive-step}]
	Take $\bar{f} \in \poly(\cF \to G_\bullet/\Gamma)$ as in Theorem \ref{thm:main-robust}. By repeated application of Proposition \ref{prop:main-inductive-step}, for $n \geq 0$ we construct asymptotic $\cS_{l_n}$-subsequences $\bar f^{(n)}$, where (up to the identification discussed in Lemma \ref{lem:poly-on-subnilmanifold}) $\bar f^{(n)} \in \poly(\cF \to G_\bullet^{(n)}/\Gamma^{(n)})$. We begin with $G^{(0)}_i = G_i$ and $\Gamma^{(0)} = \Gamma$, $l^{(0)} = k$ and $r^{(0)} \geq 1$. 
	
	For each $n \geq 0$, the nilmanifold $G_\bullet^{(n+1)}/\Gamma^{(n+1)}$ takes the form $G_0^{(n+1)} = G_1^{(n+1)} = \dots = G_{2r_n}^{(n+1)} = G_{2r_n}$ and $G_i^{(n)} = G_i$ for $i > 2r^{(n)}$, and $\Gamma^{(n+1)} = \Gamma \cap G^{(n+1)}_0$. We may finally take $l^{(n+1)} = l^{(n)} - 2 r^{(n)} = k - \sum_{j=0}^{n} 2r^{(j)} $. The construction ensures that $r^{(n+1)} \geq 2r^{(n)}$. The subsequence at step $n$ can be constructed, provided that $l^{(n)} \geq 0$.
		
	We are specifically interested in the first time $n$ such that $r^{(n)} \geq d+1$, whence $G^{(n)}$ is trivial and $f^{(n)}$ is constantly equal $e_G$. For such $n$, we have $2 r^{(n-1)} \leq 2d$, so $l^{(n)} \geq k - 4d \geq 0$ by assumption.
\end{proof}

\subsection{Reduction to an abelian problem}\label{sec:MAIN:abelian}
We deduce Proposition \ref{prop:main-inductive-step} from a model problem in the abelian setting. The proof of the following proposition is the most technical element of this paper, and occupies most of Section \ref{sec:MOD}.

\begin{proposition}\label{prop:main-abelian}
Let $k, d$ be integers with $d \leq k +1$, and put $l = k-d-1 \geq 0$. Let $f \colon \cFe \to \TT^m$ be a polynomial map of degree $d$ into a torus. 

Then, the constant sequence $0 \in \TT^m$ is an asymptotic $\cS_l$-subsequence of $f$.
\end{proposition}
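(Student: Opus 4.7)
The plan is to mimic the strategy of Step 1 of the case $d=2$ (Section \ref{sec:SPL:d=2}) and then bootstrap from a single near-vanishing evaluation to a full $\cS_l$-subsequence using the ultrafilter restriction Lemma \ref{lem:ultrafilter-restriction-lemma}. After first passing to a stable asymptotic $\cS_k$-subsequence of $f$, Corollary \ref{cor:structure-of-stable-poly(F->T)} yields the expansion
$$
f(\a) \;=\; \sum_{\substack{\gamma\subseteq\a\\ \diam\gamma\leq k\\ |\gamma|\leq d}} a_\gamma,
\qquad a_{\gamma+k}=a_\gamma,
$$
which reduces the problem to a combinatorial statement about these coefficients.

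Following the $d=2$ blueprint, I would fix a window size $M=M(d,k)$, introduce a notion of $(M,N)$-generic well-formed $\a\in\cS_k$ (every syndetic pattern in $[M]$ that can appear many times in some element of $\cS_k$ actually appears at least $N$ times in $\a$), and define $\Sigma\subseteq\TT^m$ as the set of accumulation points of $f(\a)$ over increasingly generic well-formed $\a$, together with $\Delta$ as the closure of the set of perturbations $f(\a')-f(\a)$ with $\a,\a'$ generic and well-formed and $|\a\triangle\a'|\leq L$ for some $L$ independent of $\a$. Exactly as in the $d=2$ case one checks $\Sigma+\Delta=\Sigma$ and that $\Delta$ is a closed subgroup of $\TT^m$. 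The \emph{main obstacle} is to prove $\Delta\supseteq\cl\bigl(\sum_\gamma\ZZ a_\gamma\bigr)$: for $|\gamma|=1$ a single-element toggle inside a generic block produces $a_\gamma$ verbatim, but for larger $|\gamma|$ one must exhibit pairs of length-$M$ patterns whose local modification shifts $f(\a)$ by $a_\gamma$ modulo lower-complexity terms already known to lie in $\Delta$, proceeding by induction on $|\gamma|$ and exploiting both the periodicity $a_{\gamma+k}=a_\gamma$ and the bound $|\gamma|\leq d$ to force unwanted contributions either to cancel outright or to fall into the previously built-up subgroup. Combined with the opposite inclusion $\Sigma\subseteq\cl\bigl(\sum_\gamma\ZZ a_\gamma\bigr)$, which is immediate from the expansion of $f$, this yields $\Sigma=\Delta$, and in particular every generic well-formed $\a$ can be modified by bounded surgery to $\a'$ with $f(\a')$ arbitrarily close to $0$.

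To upgrade this uniform single-point vanishing to a full asymptotic $\cS_l$-subsequence identically equal to $0$, I would invoke Lemma \ref{lem:ultrafilter-restriction-lemma} in its $\cS_l\to\cS_k$ analogue: select a $k$-tuple of non-principal ultrafilters $p=(p_1,\dots,p_k)$ concentrated on indices $i_j$ producing generic well-formed blocks along which $f(\a_\b(i))\to 0$ simultaneously for all $\b$ ranging over an exhausting family of finite subsets of $\cS_l$. The sharp hypothesis $l=k-d-1$ enters at this decoupling step through a dimension count: because each $a_\gamma$ is supported on a set of size $\leq d$ and diameter $\leq k$, leaving a budget of $k-d-1$ in the admissible gaps between consecutive blocks of the $\cS_l$-subsequence prevents contributions of $f$ coming from non-adjacent blocks from interfering, which is what makes the ultrafilter construction consistent across all $\b\in\cS_l$. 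I expect the combinatorial bookkeeping behind the previous paragraph — concretely, the inductive construction of local perturbations generating each $a_\gamma$ — to be the principal technical difficulty and to occupy most of Section \ref{sec:MOD}.
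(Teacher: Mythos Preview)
Your single-point strategy --- defining $\Sigma, \Delta \subseteq \TT^m$ and showing $0 \in \Sigma$ for one generic $\a \in \cS_k$ --- is essentially the Theorem~\ref{thm:A} argument (cf.\ Section~\ref{sec:MOD:Thm-A}), and it does yield a single $\a$ with $f(\a)$ arbitrarily small. The genuine gap is the bootstrapping. You need an IP-ring $(\a_i)$ with $\b \mapsto \a_\b$ sending $\cS_l \to \cS_k$ and $f(\a_\b) \approx 0$ for \emph{all} $\b \in \cS_l$ simultaneously. For $|\b| \geq 2$ the value $f(\a_\b)$ acquires cross-terms $a_\gamma$ with $\gamma$ meeting several of the blocks $\a_i$; these cannot be eliminated by tuning the blocks independently, and Lemma~\ref{lem:ultrafilter-restriction-lemma} gives no mechanism to force the ultrafilter limit to equal $0$ --- it only asserts that whatever limit the chosen ultrafilters produce is an asymptotic subsequence. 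Your dimension-count remark that non-adjacent blocks do not interfere is correct but insufficient: for $\a_\b$ to lie in $\cS_k$ the blocks $\a_i$ with $i \in \b$ (which are $\cS_l$-adjacent) must share common $k$-windows, so cross-terms between them are unavoidable and must be controlled explicitly.

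The paper's fix is to run the entire $\Sigma/\Delta$ argument in $\TT^{\cFe}$ rather than in $\TT^m$: a \emph{pattern} becomes a tuple $(\pi_j)_j$ recording how a length-$3k$ window meets each block of the IP-ring, $\Sigma$ is the set of attainable full sequences $\bigl(f(\a_\xi)\bigr)_{\xi \in \cFe}$, and $\Delta$ perturbs all coordinates at once (Section~\ref{sec:MOD:perturbation}). The hypothesis $l + d + 1 \leq k$ enters not in any decoupling step but in the pattern construction of Claim~\ref{MOD:claim:1}, where one must greedily fit a set $\gamma$ with $|\gamma|\leq d$ together with $l+1$ nonempty block-slices $\pi_j$, all disjoint, inside a single window of width $k+1$. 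Once $\Sigma = \Delta$ is established in $\TT^{\cFe}$, the conclusion $0 \in \Sigma$ already \emph{is} the desired asymptotic $\cS_l$-subsequence; no separate bootstrap via ultrafilters is needed.
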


\begin{remark}
	Any finite abelian group $A$ can be embedded in a torus, so the above proposition applies equally to polynomial maps $f \colon \cFe \to \TT^m \times A$. 
\end{remark}

\begin{proof}[Proof of Proposition \ref{prop:main-inductive-step} assuming Proposition \ref{prop:main-abelian}]
	Let $\bar{f} \in \poly(\cF \to G_\bullet/\Gamma)$ as in Proposition \ref{prop:main-inductive-step}. It  will suffice to find a $\cS_l$-asymptotic subsequence $\bar{g}$ of $\bar{f}$ which takes values in $G_{2r}\Gamma$.
	
	We have a natural projection map $\pi \colon G/\Gamma \to G/G_{2r} \Gamma  $, given by $g\Gamma \mapsto gG_{2r}\Gamma $. Since $[G_0,G_0] = [G_{r},G_{r}] \subset G_{2r}$, the quotient $G/G_{2r} \Gamma = (G/G_{2r})/(G_{2r}\Gamma /G_{2r})$ is a compact connected abelian Lie group, hence a torus.
	
	Note that $\pi \circ \bar{f} \colon \cFe \to G/\Gamma G_{2r}$ is a polynomial with respect to the length $2r$ filtration $\tilde{G}_j = G_j/G_{2r}$ for $j \leq 2r$ (and $\tilde G_j = \{e\}$ for $j > 2r$). We may identify $G/\Gamma G_{2r}$ as embedded in a torus, equipped with the standard length $2r-1$ filtration, and thus construe $\pi \circ \bar{f}$ as a degree $2r-1$ polynomial into a torus.
	
	 Hence, we may apply Proposition \ref{prop:main-abelian} to extract an asymptotic $\cS_{k-2r}$-subsequence of $\pi \circ \bar{f}$ which is identically $0$. By a standard diagonal argument, we may assume that this asymptotic $\cS_{k-2r}$-subsequence takes the form $\pi \circ \bar{g}$ where $\bar{g}$ is an asymptotic $\cS_{k-2r}$-subsequence of $\bar{f}$. This means precisely that $\bar{g}$ takes values in $G_{2r} \Gamma $, so we are done.
\end{proof}

\subsection{A counterexample}\label{sec:MAIN:c-exple}

It is not clear if Theorem \ref{thm:main-robust} is sharp, in the sense that it is no longer true for a larger value of $l$. Indeed, if the answer to Question \ref{question:basic} is positive, one would expect the theorem to hold with $l = k-d$. 

Here, we give an example showing that the bounds in Proposition \ref{prop:main-abelian} and in Theorem \ref{thm:A} are close to being sharp.

\begin{example}\label{MAIN:ex:d=k+1}
	Let $k$ be an integer, and put $d=k+1$. Consider the sequence
$$
	f(\a) = t \sum_{\substack{ \emptyset \neq \gamma \subset \a \\ \diam (\gamma) \leq k }} (-1)^{\abs{\gamma}} \pmod{1},
$$
where $t \in \TT \setminus \{0\}$ is a constant. Since any set $\gamma$ with $\diam (\gamma) \leq k$ automatically has $\abs{\gamma} \leq k+1$, thus defined $f$ is evidently a polynomial of degree $d$ with $f(\emptyset) = 0$.

We claim that for any $\a \in \cS_k$, we have $f(\a) = - t$. In particular, the constant $0$ sequence is not an asymptotic $\cS_0$-subsequence of $f$.

We proceed by induction on $\abs{\a}$. If $\abs{\a} = 1$ then we have $f(\a) = -t$ directly by definition. If $\abs{\a} \geq 2$, writing $i = \max \a$ and $\a = \a' \cup \{j\}$, we may compute that 
$$
	f(\a) = f(\a') - t \sum_{\substack{ \delta \subset \a \cap [j-k,j) }} (-1)^{\abs{\delta}} = f(\a') = -t,
$$
	by the inclusion-exclusion and the inductive assumption.
\end{example}

\begin{example} \label{MAIN:ex:l=k-d+2}
	Fix integers $d \leq k$. We construct a degree $d$ polynomial $f \in \poly(\cFe \to \TT)$ such that if $f$ is viewed as a $\cS_k$-sequence, then for $l \geq k-d+2$ the constant sequence $0$ is \emph{not} an asymptotic $\cS_l$-subsequence $f$. 
	
	 Generalising the construction from Example \ref{MAIN:ex:l=k-d+2}, let $f$ be given by
$$
	f(\a) = t \sum_{\substack{ \emptyset \neq \gamma \subset \a \\ \diam (\gamma) \leq k \\ \abs{\gamma} \leq d}} (-1)^{\abs{\gamma}} \pmod{1},
$$
where $t \in \TT \setminus \{0\}$. Ostensibly, $f$ is a polynomial of degree $d$ and $f(\emptyset) = 0$. Suppose now that $\FU(\a_i)$ is an \ip\ ring such $\b \mapsto \a_\b$ maps $\cS_{l}$ sets to $\cS_{k}$ sets, where $l$ is such as above. We will show that $f(\a_\b)$ is far from $0$ for some $\b \in \cS_l$. Indeed, we will show that $f(\a_i) = -t$ for sufficiently large $i$, namely $i$ sufficiently large that $\min \a_i > \max \a_1, \max \a_2, \dots, \max \a_l$.

Consider any $\gamma \subset \a_i$ with $\diam(\gamma) \leq k$. We will show these properties already imply that $\abs{\gamma} \leq d$. Let $n = \min \gamma$ so that $\gamma \subset [n,n+k]$. For any residue class $m \in [l]$, $m \not \equiv i \pmod{l}$, there needs to be some $j \equiv m \pmod{l}$ such that $\a_j \cap [n,n+k] \neq \emptyset$; else for $\b = \{m,m+l,m+{2l}, \dots, m+bl\}$ with large $b$, the set $\a_\b$ would fail to have gaps $\leq k$ although $\b$ has gaps $\leq l$. Thus, by a counting argument we have $\abs{\gamma} \leq (k+1) - (l-1) \leq d$. If follows that
$$
	f(\a_i) = -t + t \sum_{\substack{ \gamma \subset \a_i \\ \diam (\gamma) \leq k}} (-1)^{\abs{\gamma}} = -t \pmod{1},  
$$
where the last equality is a general fact about $\cS_k$ sets which is proved by simple induction on $\abs{\a_i}$.
\end{example}

\begin{example} \label{MAIN:ex:degree-2}
	Fix $k \geq 3$, and consider a ``generic'' degree $2$ sequence 
	$$
		f(\a) = \sum_{\substack{ \emptyset \neq \gamma \subset \a \\ \abs{\gamma} \leq 2 }} a_\gamma,
	$$
	where $a_\gamma \in \TT^{\binom{k+1}{2}}$ obey the following ``stability'' conditions: $a_{\gamma + k} = a_\gamma$, $a_\gamma = 0$ if $\operatorname{gap}(\gamma) > k$, $a_{i} = - a_{i,i+k }$, $a_{i,j} + a_{j,i+k} = 0$ and $\set{a_{i,j} }{ 1 \leq i \leq j \leq k}$ are linearly independent. (To shorten the notation, we write $a_{i,j}$ for $a_{\{i,j\}}$.)
	
	Put $l = k-1$. We will show that the constant $0$ sequence is not an asymptotic $\cS_l$-subsequence of $f$. More precisely, we claim that if $\tilde f( \b) = f(\a_\b)$ is an $\cS_k$-subsequence of $f$ and we write out the expansion 
	$$
		\tilde f(\b) = \sum_{\substack{ \emptyset \neq \delta \subset \b \\ \abs{\delta} \leq 2 }} b_\delta,
	$$
	then for infinitely many $j$, the coefficient $b_{j,j+l}$ is of a rather special form $a_{i_1,i_2} + \epsilon_1 a_{i_1',i_2} + \epsilon_2 a_{i_1,i_2'} \neq 0$, where $\epsilon_1,\epsilon_2 \in \{0,1\}$ and $i_1' \not \equiv i_1 \pmod{k}$, $i_2' \not \equiv i_2 \pmod{k}$.  Note that we have $b_{j,j+l} = \sum_{i \in \a_j, i' \in \a_{j+l}} a_{i,i'}.$
	
	Fix a sufficiently large $j$. We may assume that $\min \a_j < \min \a_{j+l}$ and $\max \a_j < \max \a_{j+l}$. Since the set $\a_j \cup \a_{j+l}$ has gaps bounded by $k$, there is the least $i_1 \in \a_j$ such that $[i_1,i_1 +k] \cap \a_{j+l} \neq \emptyset$. By the same token, there is the largerst $i_2 \in \a_{j+l}$ such that $[i_2-k,i_2]\cap \a_{j} \neq \emptyset$. 

	Each length $k$ interval $[a,a+k) \subset [i_1,i_2]$ contains an element of $\a_j,\a_{j+l}$, as well as an element of one of $\a_m$ for $m$ in each of the residue classes modulo $l$ different than $j$. By an inductive argument reminiscent of Observation \ref{SEQ:obs:form-of-Sk-sseq}, we see that $[i_1,i_2] \cap \a_j$ and $[i_1,i_2] \cap \a_j'$ are arithmetic progressions of step $k$ (same applies to $[i_1,i_2] \cap \bigcup_{n \equiv m \bmod{k}} \a_n$ for each $m \not \equiv j \pmod{k}$). Because of the stability conditions we imposed, the total contribution to $b_{j,j+l}$ from the part of $\a_j,\a_{j+l}$ contained in $[i_1,i_2]$ is
	$$
		\sum_{ \substack{ i \in [i_1,i_2] \cap \a_j,\\  i' \in  [i_1,i_2] \cap \a_{j+l}}} a_{i,i'} =
		 \sum_{\substack{  i < i' < i + k \\ i_1 \leq i, \ i' < i_2 \\ } }(a_{i,i'} + a_{i',i+k}) + a_{i_1^*,i_2}
		 = a_{i_1^*,i_2} = a_{i_1,i_2^*},
	$$	
	where $i_1^* = \max [i_1,i_2] \cap \a_{j}$ and $i_2^* = \min [i_1,i_2] \cap \a_{j+l}$.
	
	By another counting argument, we see that the interval $[i_2^*-k,i_{2}^*)$ may contain at most one additional element $i_1' < i_1$ of $\a_{j}$, and likewise interval $(i_1^*,i_{1}^*+k]$ may contain one additional element $i_2'$ of $\a_{j+l}$. These give the contributions $a_{i_1',i_2^*}$ and $a_{i_1^*,i_2'}$ accordingly. We may finally bring the result to the required form, using periodicity of $a_\gamma$.
\end{example}

\begin{remark}
	We expect that for any $d \leq k+1$, there exists a degree $d$ polynomial $\cS_k$-sequence which does not have the constant $0$ sequence as asymptotic $\cS_l$-subsequence for $l \geq k-d+1$. However, exhibiting concrete examples of such sequences proves problematic. 
\end{remark}

\section{Model problem}\label{sec:MOD}

In this section we prove Proposition \ref{prop:main-abelian}. Together with previous considerations, this will finish the proof of our main result, Theorem \ref{thm:B}. Towards the end, we also explain how to adapt the argument to prove Theorem \ref{thm:A}.

Let $f \colon \cFe \to \TT$ be a degree $d$ polynomial viewed as a $\cS_k$-subsequence, as in  Proposition \ref{prop:main-abelian}. Suppose further that $k \geq d +1$ and let $l = k-d-1$. Passing to an asymptotic $\cS_k$-subsequence if necessary, we may assume that $f$ is stable. Recall that by Corollary \ref{cor:structure-of-stable-poly(F->T)}, $f$ takes the form
\begin{align}
	f(\b) &= \sum_{\gamma \subset \beta} a_\gamma,  \label{MOD:eq:001} \\
	a_{\gamma} &= 0 \text{ if } \abs{\gamma} \geq d \text{ or } \diam \gamma > k.
\end{align}

Our main idea, much as in the case $d=2$ discussed in Section \ref{sec:SPL}, is to begin with a suitably generic \ip\ ring $\FU(\a_i)$ such that $\b \mapsto a_\beta$ maps $\cS_l$ to $\cS_k$, 
and to perturb $\a_i$ slightly to ensure that $f(\a_\b)$ is small for all $\b \in \cFe$. Throughout, $(\a_i)$ denotes a sequence of disjoint $\cS_k$ sets.

\subsection{Patterns}\label{sec:MOD:pattern}
We define a \emph{pattern of length $M$} to be a collection of disjoint sets $ \pi = (\pi_i)_{i=1}^M$ which are either in $\cS_k$ or empty, such that $\beta \mapsto \pi_\beta$ maps $\cS_l$ sets to $\cS_k$, provided that $\pi_i$ are all non-empty for $i \in \beta$. We say that $(\pi_i)$ \emph{occurs} at position $n \equiv 0 \pmod{k}$ in $(\a_i)$ if for each $i$, we have $(\a_i - n) \cap [M] = \pi_i$. (Note that this definition depends on $M$ as well as on $\a_i$ and $\pi_i$).

We say that $(\a_i)$ is \emph{well-formed} if the following conditions are satisfied:
\begin{enumerate}
\item for each $i$, $\min \a_i \equiv \max \a_i \pmod{k}$,
\item for each $i$, $\min \a_{i+l+1} > \max \a_i + k$,
\item $\b \mapsto \a_\b$ maps $\cS_l$ to $\cS_k$.
\end{enumerate}
(Note that we are working with fixed $k,l$ and this definition is specific to those values.)

We say that $(\a_i)$ is $(M,N)$-generic if for any pattern $\pi$ of length at most $M$, one of the following holds:
\begin{enumerate}
\item there is a constant $C = C(\pi)$ such that $(\pi_i)$ occurs at most $C$ times in any well-formed sequence $(\b_i)$,
\item the pattern $\pi$ occurs at least $N$ times in $(\a_i$).
\end{enumerate}
We apply this definition in the regime where $M$ is fixed and $N \to \infty$. In fact, it  is enough to take $M = 3k$. The following observation shows that the above definition is not vacuous. 

\begin{claim}\label{claim:generic-seqs-exist}
For any $(M,N)$, there exists a well-formed sequence of sets $(\a_i)$ which is $(M,N)$-generic.
\end{claim}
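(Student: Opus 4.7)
The claim is a direct construction, and the principal observation is that the number of patterns of length at most $M$ is finite, bounded by a function of $M$ and $k$ alone (each $\pi_i \subseteq [M]$, and there are $M$ of them). So it suffices to ensure that each of the finitely many ``unbounded'' patterns $\pi^{(1)}, \dots, \pi^{(K)}$---those not covered by clause (1) of the definition of genericity---appears at least $N$ times in $(\a_i)$.

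My plan is a block construction. For each $\pi^{(j)}$, I would use the unboundedness assumption to pick some well-formed sequence $(\b_i^{(j)})$ in which $\pi^{(j)}$ appears at least $N$ times; by truncating to the finite initial segment that carries those $N$ occurrences (plus any $\a_i$'s that meet the relevant windows) and extending trivially, I may assume the non-empty $\b_i^{(j)}$'s have support contained in a bounded interval $I_j \subset \NN$. Then I would assemble $(\a_i)$ by translating each $(\b_i^{(j)})$ into disjoint, widely-separated windows of $\NN$, by an amount $T_j$ chosen to be a large multiple of $k$ (so condition (1) is preserved) and large enough that the spacing condition (2) $\min \a_{i+l+1} > \max \a_i + k$ is satisfied at block boundaries. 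Between consecutive blocks I would interleave ``buffer'' segments consisting of trivial $\a_i$'s, for instance singleton sets arranged along an arithmetic progression of step $k$. The global sequence $(\a_i)$ is indexed in order of spatial position, so that smaller indices correspond to earlier positions in $\NN$.

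The step I expect to be the main obstacle is verifying the third well-formedness condition, namely that $\b \mapsto \a_\b$ maps $\cS_l$ into $\cS_k$, across block boundaries. Within any single block this is inherited from $(\b_i^{(j)})$ being well-formed, and condition (2) is handled by the size of the shifts; but a subset $\b \in \cS_l$ which straddles the transition from one block to the next may produce a union $\a_\b$ with a gap exceeding $k$. The remedy is to design the buffer segments to be both long (many consecutive indices) and ``solid'' enough that the trace on the buffer of any such $\b$ is automatically $k$-syndetic, so that the buffer bridges the transition; choosing the buffer to be $l+1$ consecutive indices whose images form a single arithmetic progression of step $k$ should suffice.

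Finally, I would check the genericity conclusion: each $\pi^{(j)}$ is realized $N$ times (once from each of $N$ dedicated block placements, or by placing a realizer with $N$ built-in occurrences once), while patterns in clause (1) automatically satisfy the required bound $C(\pi)$, so the resulting sequence is $(M,N)$-generic and well-formed as claimed.
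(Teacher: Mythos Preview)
Your block-concatenation scheme has a genuine gap rooted in the definition of pattern occurrence: a pattern $(\pi_i)_{i=1}^M$ occurs at position $n$ in $(\a_i)$ when $(\a_i - n) \cap [M] = \pi_i$ for each $i$, with the \emph{same} index $i$ on both sides. Thus if, say, $\pi^{(j)}_5 \neq \emptyset$, every occurrence of $\pi^{(j)}$ in the global sequence requires the specific set $\a_5$ to meet the window $[n+1,n+M]$ in the prescribed way. But you re-index ``in order of spatial position'': the block originally realising $\pi^{(j)}$ via $\b_1^{(j)}, \b_2^{(j)}, \dots$ acquires global indices $I_j+1, I_j+2, \dots$ for some offset $I_j$ fixed by the earlier blocks and buffers. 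The set that was $\b_5^{(j)}$ becomes $\a_{I_j+5}$, while the actual $\a_5$ lives in the first block and has no support near the $j$-th spatial window. So $\pi^{(j)}$ does not occur in $(\a_i)$ at the intended positions; what is realised is an index-shifted cousin, which for $I_j \geq M$ is not even a length-$M$ pattern. The difficulty you flag---maintaining the $\cS_l \to \cS_k$ property across block boundaries---is real but secondary; the index mismatch is the structural obstruction.

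The paper proceeds quite differently and never re-indexes. It starts from a single well-formed sequence in which each $\a_i$ is a long step-$k$ arithmetic progression; such a sequence already contains many occurrences of a ``basic'' family of patterns (those in which each non-empty $\pi_i$ is itself such a progression). It then makes local, window-by-window modifications---altering finitely many elements while keeping every index $i$ fixed---to convert basic patterns into progressively more general ones, until every unbounded pattern is realised at least $N$ times.
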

\begin{proof}
	We need to construct $(\a_i)$ such that each pattern $(\pi_i)$ of bounded length, which may potentially occur numerous times in some well-formed $(\b_i)$, occurs many times in $(\a_i)$. Our strategy is to begin with a class of patterns whose numerous occurrences may be easily guaranteed, and then gradually extending this class. The construction is (implicitly) inductive, but we reuse the same symbol $(\a_i)$ at each step.
	
	Taking $\a_i$ to be long arithmetic progressions with step $k$, is easy to ensure that $(\a_i)$ has many occurrences of any pattern of the form
	\begin{equation}
	\pi_i =
		\begin{cases}
		 \{ (i \bmod k) , (i \bmod k)  + k, \dots, (i \bmod k)  + mk \}, &  \text{if } j \leq i \leq i+l' \\
		\emptyset, & \text{otherwise}, 
		\end{cases}
		\label{MOD:eq:002}
	\end{equation}
	where $l' = l$ or $l'=l-1$, and $m$ is bounded. It is not difficult to alter $(\a_i)$ so as to change any occurrence of the pattern \eqref{MOD:eq:002} into a pattern of the similar form
	\begin{equation}
	\pi_i =
		\begin{cases}
		 \{ a_i , a_{i} + k, a_{i} + 2k, \dots, a_{i} + m'k \}, &  \text{if } j \leq i \leq i+l' \\
		\emptyset, & \text{otherwise}, 
	\end{cases}
		\label{MOD:eq:003}
	\end{equation}	
	where $1 \leq a_i \leq k$ are arbitrary and $m' = m - O(1)$. Hence, we may assume that $(\a_i)$ contains many occurrences of patterns \eqref{MOD:eq:003}. Applying similar reasoning, we may also convert occurrences of \eqref{MOD:eq:003} into patterns 
	\begin{equation}
	\pi_i =
		\begin{cases}
		 \{ a_i , a_{i} + k, \dots , a_i + t_i k, b_i + t_i k, \dots, b_{i} + m''k \}, &  \text{if } j \leq i \leq i+l' \\
		\emptyset, & \text{otherwise}, 
	\end{cases}
		\label{MOD:eq:004}
	\end{equation}	
	where $1 \leq b_i \leq k$ are arbitrary and $m'' = m'-O(1)$ (but we claim no control over the $t_i$). 
	
	Finally, take any pattern $(\pi_i)$ which occurs numerous times in some well-formed $(\b_i)$. There is some index $j$ such that if $\pi_i \neq \emptyset$ then $j \leq i \leq j+l$. We may further assume that $\pi_i \neq \emptyset$ precisely 	for $j \leq i \leq j+l'$, where $l' = l$ or $l' = l-1$. Letting $a_i = \min \pi_i \bmod k$ and $b_i = \max \pi_i \bmod k$, we see that any occurrence of the pattern \eqref{MOD:eq:004} may be converted into an occurrence of the sought pattern $\pi$, supposing (as we may) that $m''$ is sufficiently large with respect to $\pi$.
\end{proof}

\subsection{Perturbations}\label{sec:MOD:perturbation}
Fix an arbitrary metric on $\TT^{\cFe}$ which is compatible with the product structure. We define $\Sigma$ to be the set of those $s \in \TT^{\cFe}$ such that for any $\e>0$ and any $N$ there exists $(3k, N)$-generic, well-formed $(\a_i)$ such that $\norm{ f(\a_\xi) - s_{\xi} } < \e$ (where $\xi$ stands for a dummy variable\footnote{Hence, strictly speaking we mean $\norm{ \bra{ f(\a_\xi) - s_{\xi} }_{\xi \in \cFe} } < \e$.}).

We further define the set of possible ``perturbations'' $\Delta \subset \TT^{\cFe}$.
For $t = (t_\xi)_{\xi \in \cFe}$, we declare $t \in \Delta_0$ if and only if there exists constants $L$ and $N_0$ such that for all $N \geq N_0$, all $(3k,N)$-generic, well-formed $(\a_i)$, we can find well-formed $(\a'_i)$ with $ \sum_{i = 1}^\infty \abs{\a_i \triangle \a'_i} \leq L$ and $f(\a'_\xi) = f(\a_\xi) + t_\xi$. Finally, put $\Delta := \cl(\Delta_0)$.

Because of \eqref{MOD:eq:001}, any $(s_\xi) \in \Sigma$ admits a representation $s_\xi = \sum_{\gamma \subset \xi} c_\gamma$ for some constants $c_\gamma$. More precisely, if $s_\xi = f(\a_\xi)$, then $c_\gamma$ are given by $c_\gamma = \sum_{\delta}{a_{\delta}}$, where the sum runs over $\delta$ with $\delta \subset \a_\gamma $ but $\delta \not\subset \a_{\gamma'}$ for $\gamma' \subsetneq \gamma$. Stability of $f$ implies that $c_\gamma = 0$ if $\abs{\gamma} > d$ or  $\diam(\gamma) > k$. Likewise, any $t \in \Delta$ admits a representation of the same form $t_\xi = \sum_{\gamma \subset \xi} b_\gamma$, where $b_\gamma = 0$ if $\abs{\gamma} > d$ or  $\diam(\gamma) > k$. Let $A \subset \TT^{\cFe}$ be the set of all $(c_\gamma)_\gamma$ as above, and let $B$ accordingly be the set of all $(b_\gamma)_\gamma$.

The sets $\Sigma$ and $A$ are rather closely connected. Indeed, the map $\TT^{\cFe} \to \TT^{\cFe}$ given by $\{\xi \mapsto c_\xi \} \mapsto \{ \xi \mapsto \sum_{\gamma \subset \xi} c_\xi \}$ is a bijection (as verified by an inclusion-exclusion argument) and an isomorphism of groups. Same applies to $\Delta$ and $B$.

We make some simple observations concerning the sets just defined. Just as before, we have $\Delta + \Sigma = \Sigma$ and $\Delta + \Delta = \Delta$, and moreover $\Delta$ is a closed group. The argument is essentially the same as in Section \ref{sec:SPL:d=2}. For the same reasons, we have that $A + B = A$ and $B$ is a closed group. We now study $\Delta$ and $\Sigma$ in more detail. 

\begin{claim}\label{MOD:claim:1}
For any $\kappa,\gamma \in \cFe$ with $d \geq \abs{\gamma} \geq \abs{\kappa}$ and $\diam(\kappa) \leq l$, $\diam(\gamma) \leq k$, there exists $t = (t_\xi)_\xi \in \Delta$ such that
\begin{enumerate}
\item\label{item:1@claim:1@thm:model}
	$t_\beta = 0$ unless $\b \supseteq \kappa$,
\item\label{item:2@claim:1@thm:model}
	$t_\kappa \in a_\gamma + \sum_{\delta \supsetneq \gamma} \ZZ a_\delta$.
\end{enumerate}
\end{claim}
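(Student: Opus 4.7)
My plan is to realise $t$ as an inclusion--exclusion combination of elementary perturbations of a single generic sequence, and to clear the unwanted lower-order contributions by induction on $|\gamma|-|\kappa|$.

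Begin with an $(M,N)$-generic, well-formed $(\a_i)$, with $M, N$ sufficiently large in terms of $\gamma$. Fix a multiple $m$ of $k$ so that the translate $X := \gamma + m$ lies in a long gap of $\bigcup_i \a_i$ (well-formedness guarantees a supply of such gaps of length $>k$, and $\diam \gamma \le k$), and choose a surjection $\phi \colon \gamma \to \kappa$, which exists since $|\gamma| \ge |\kappa|$. For each $S \subseteq \kappa$, let $u^{(S)} \in \Delta_0$ be the perturbation that modifies only those $\a_i$ with $i \in S$, replacing $\a_i$ by $\a_i \cup \{x + m : x \in \gamma,\ \phi(x) = i\}$ for such $i$; the generic-gap choice of $m$ ensures each $u^{(S)}$ is a legitimate well-formed perturbation with bounded symmetric difference. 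Because $\Delta$ is a group,
\[
t \ := \ \sum_{S \subseteq \kappa} (-1)^{|\kappa|-|S|}\, u^{(S)} \ \in \ \Delta.
\]

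Expanding $u^{(S)}_\beta = f(\a'_\beta) - f(\a_\beta)$ in the coefficients $a_\delta$ supplied by Proposition \ref{cor:structure-of-stable-poly(F->T)} and exchanging summations, the Möbius identity $\sum_{T \subseteq S \subseteq \kappa}(-1)^{|\kappa|-|S|} = [T=\kappa]$ gives
\[
t_\beta \ = \ [\kappa \subseteq \beta] \sum_{\substack{Y \subseteq X \\ \phi(Y-m) = \kappa}} \ \sum_{\delta' \subseteq \a_\beta} a_{\delta' \cup Y},
\]
which immediately yields property (\ref{item:1@claim:1@thm:model}). For property (\ref{item:2@claim:1@thm:model}) at $\beta = \kappa$, isolate the $Y = X$ term: its $\delta' = \emptyset$ contribution equals $a_X = a_\gamma$ by the periodicity in Proposition \ref{cor:structure-of-stable-poly(F->T)}, and its $\delta' \ne \emptyset$ contributions translate by $-m$ to $a_{(\delta'-m)\cup\gamma}$, which lies in $\sum_{\delta \supsetneq \gamma}\ZZ a_\delta$ because $\delta' - m$ is disjoint from $\gamma$ and non-empty. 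In the base case $|\gamma| = |\kappa|$, $\phi$ is forced to be a bijection, so the only index $Y$ with $\phi(Y-m) = \kappa$ is $Y = X$, and the claim follows at once.

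For the inductive step $|\gamma| > |\kappa|$, each proper $Y \subsetneq X$ with $\phi(Y-m) = \kappa$ contributes an element of $a_{\gamma_Y} + \sum_{\delta \supsetneq \gamma_Y}\ZZ a_\delta$, where $\gamma_Y := Y - m \subsetneq \gamma$ and $|\gamma_Y| \ge |\kappa|$. The inductive hypothesis applied to $(\gamma_Y, \kappa)$ produces $u^{(Y)} \in \Delta$ whose $\kappa$-coordinate cancels the leading $a_{\gamma_Y}$, and subtracting $\sum_Y u^{(Y)}$ from $t$ removes all the leading lower-order terms. The main technical obstacle I anticipate is that the residual of this subtraction lies a priori in $\sum_Y \sum_{\delta \supsetneq \gamma_Y}\ZZ a_\delta$, which may include coefficients $a_\delta$ with $\delta \not\supseteq \gamma$ and therefore strays outside the target subgroup $\sum_{\delta \supsetneq \gamma}\ZZ a_\delta$. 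The plan is to absorb these stray terms by a further iteration of the same correction, invoking the claim recursively on each offending $(\delta, \kappa)$; termination is ensured by the finiteness of the candidate set of indices $\delta$ with $|\delta| \le d$ and $\diam \delta \le k$, together with a well-founded complexity measure on the residual (e.g.\ lexicographic on $(|\delta|, |\delta \cap \gamma|)$) that strictly decreases at each step.
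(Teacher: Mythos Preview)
Your construction has a genuine error and a structural inefficiency, both of which the paper avoids.

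\textbf{The gap does not exist.} You place $X=\gamma+m$ in ``a long gap of $\bigcup_i\alpha_i$'', citing the well-formedness condition $\min\alpha_{i+l+1}>\max\alpha_i+k$. But that condition only separates $\alpha_i$ from $\alpha_{i+l+1}$; the intermediate sets $\alpha_{i+1},\dots,\alpha_{i+l}$ can and generically will occupy the intervening interval. For $l\ge 1$ the requirement that $\alpha_j\cup\alpha_{j+1}\in\cS_k$ for every $j$ actually forbids any gap of length $>k$ in the union. Worse, even granting a gap, adding a piece of $X$ to some $\alpha_j$ would leave $\alpha_j'$ with a jump of length $>k$ between its old part and the new points, so $\alpha_j'\notin\cS_k$ and the perturbation is not well-formed. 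The paper handles this not with a gap but with a length-$3k$ pattern $(\pi_j)$ in which $\pi_j\neq\emptyset$ for $j\in\kappa$ and $\bigcup_j\pi_j$ misses the translate of $\gamma$; building such a $\pi$ greedily is exactly where the hypothesis $l+d+1\le k$ gets used.

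\textbf{The inclusion--exclusion is over the wrong set.} You sum over $S\subseteq\kappa$; the paper sums over $\sigma\subseteq\gamma$. With the same partition $\gamma=\bigcup_{j\in\kappa}\gamma_j$ (your surjection $\phi$), the paper inserts $\gamma_j\cap\sigma$ into $\alpha_j$ and alternates over all $2^{|\gamma|}$ choices of $\sigma$. The resulting M\"obius identity forces every surviving index $\delta$ to contain the full translate of $\gamma$, so one obtains
\[
t_\kappa=(-1)^{|\gamma|}\sum_{\gamma+n\subseteq\delta\subseteq\alpha_\kappa\cup(\gamma+n)}a_\delta\in a_\gamma+\sum_{\delta\supsetneq\gamma}\ZZ a_\delta
\]
in a single stroke, with no recursion. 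Your coarser sum over $2^{|\kappa|}$ subsets enforces only $\phi(Y-m)=\kappa$, letting through proper $Y\subsetneq X$ and producing the stray $a_{\gamma_Y}$. Your proposed cleanup then runs into the difficulty you flag: once you replace the fictitious gap by a genuine pattern, the $\delta'\neq\emptyset$ terms no longer vanish, and correcting a stray $a_\delta$ via the claim for $(\delta,\kappa)$ with $|\delta|\ge|\gamma|$ is exactly the statement you are trying to prove, so the induction on $|\gamma|-|\kappa|$ does not close; the suggested measure $(|\delta|,|\delta\cap\gamma|)$ increases in its first coordinate under each correction rather than decreasing. Running the alternating sum over $\gamma$ from the outset removes the need for any of this.
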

\begin{proof}

	Without loss of generality, we may assume that $\gamma \subset [k,2k]$. We begin by choosing a pattern $\pi = (\pi_j)$ of length $3k$ such that
\begin{enumerate}
\item $\pi_j \neq \emptyset$ for $j \in \kappa$, 
\item $\bigcup_{j =1}^\infty \pi_j \cap \gamma = \emptyset$,
\item $(\pi_j)$ occurs in all $(3k,N)$-generic $(\a_i)$ for sufficiently large $N$.
\end{enumerate}	

	Such $\pi$ can be constructed greedily, assigning each $m \in [3k] \setminus \gamma$	to consecutive sets $\pi_j$, $j = \min \kappa, \dots, \min \kappa + l$. (It is at this point that we are using that $l + d + 1 \leq k$.)

	Since $\abs{\gamma} \geq \abs{\kappa}$, we may partition $\gamma = \bigcup_{j \in \kappa} \gamma_{j}$ into $\abs{\kappa}$ non-empty sets. Let $\gamma_j = \emptyset$ for $j \not \in \kappa$. Fix one such partition once and for all.
	
	Suppose that $(\a_i)$ is a well-formed, $(3k,N)$-generic sequence of sets, for some large $N$. Pick, arbitrarily, some $n$ such that $(\pi_j)$ appears at position $n$. For any $\sigma \subset \gamma$, we may consider the distortion of $(\a_i)$ given by $\a^\sigma_i = \a_i \cup (\gamma_i \cap \sigma + n)$. Note that the union is disjoint, and we have:
	$$
		(\a_i^\sigma - n) \cap[ 3k ] = \pi_i \cup (\gamma_i \cap \sigma).
	$$
				
	 Clearly, thus obtained $\a_i^\sigma$ differs from $\a_i$ only in boundedly many places. Hence, the difference ${ f(\a_\xi^\sigma) - f(\a_\xi) }$ belongs to $\Delta$. Note that this difference depends only on $\pi$ and $\sigma$, but not on $\a$ (this is the reason why we work with patterns of length $3k$ rather that $k$). More generally, summing over all $\sigma$ (with appropriate choice of signs) we find:
$$
	t = (t_\xi)_{\xi} \in \Delta, \qquad t_\xi = \sum_{\sigma \subset \gamma} (-1)^{\abs{\sigma}} f(\a_{\xi}^\sigma).
$$	

	We now study the coefficients $t_\b$ for different sets $\b$. (Only $\beta \subseteq \kappa$ will play an important role.) We have
\begin{align*}
	t_\beta 
	&= \sum_{\sigma \subset \gamma} (-1)^{\abs{\sigma}} 
		f(\a_{\beta}^\sigma) 
	= \sum_{\sigma \subset \gamma} (-1)^{\abs{\sigma}}
		\sum_{\delta \subset \a_{\beta}^\sigma } a_\delta
	= \sum_{ \delta \subset \a^\gamma_\beta } 
	a_\delta 
		\sum_{
		\substack{\sigma \subset \gamma\\
			\delta \subset \a_{\beta}^\sigma }
		}
	(-1)^{\abs{\sigma}}.
\end{align*}
For fixed $\delta \subset \a^\kappa_\beta$, denote by $\sigma(\delta)$ the unique minimal set $\sigma \subset \gamma$ such that $\delta \subset \a^\sigma_\beta$ ($\sigma(\delta)$ does not depend on $\beta$). We now have
	\begin{align*}
	t_\beta &=
	\sum_{ \delta \subset \a^\gamma_\beta }  a_\delta 
		\sum_{\sigma(\delta) \subset \sigma \subset \gamma}
		(-1)^{\abs{\sigma}} .
	\end{align*}
	
	The inner sum is identically $0$, unless $\sigma(\delta) = \gamma$, when it is equal to $(-1)^{\abs \gamma}$. Note that we always have $\sigma(\delta) \subset \gamma_\beta$; indeed, $\a_\b^\sigma = \a_\b^{\sigma \cap \gamma_\beta}$. Hence, $t_\b = 0$ unless $\kappa \subset \beta$. When $\kappa \subset \beta$, we may rewrite the requirement $\sigma(\delta) = \gamma$ as $\gamma + n \subset \delta$ (here, $n$ is the index where $(\pi_i)$ appears in $(\a_i)$), and hence
	\begin{align*}
	t_\beta &=
	(-1)^{ \abs{\gamma} } \sum_{
		( \gamma + n) \subset  \delta \subset \a_\beta \cup ( \gamma + n) }
		a_\delta .
	\end{align*}
	We are specifically interested in $\b = \kappa$. It is clear that the above sum includes $a_\gamma = a_{\gamma + n}$, and all the remaining summands take the form $a_\delta$ with $\delta \supsetneq \gamma$. Thus, after multiplying by $(-1)^{\abs{\gamma}}$, the constructed $(t_\xi)$ satisfies the required conditions. 
\end{proof}

Let $\vec e_\kappa$ denote the ``base vector'' $(e_{\kappa,\xi})$ with $e_{\kappa,\kappa} = 1$ and $e_{\kappa,\xi} = 0$ for $\xi \neq \kappa$.

\begin{claim}\label{MOD:claim:3}
	The set $A$ (and hence also $B$) is contained in
	$$S =  \overline{\operatorname{span}}\set{ a_\gamma \vec e_\kappa }{ d \geq \abs{ \gamma } \geq \abs{\kappa}, \ \diam(\gamma) \leq k, \ \diam(\kappa) \leq l } ,$$
the smallest closed subgroup containing all elements  $a_\gamma \vec e_\kappa$.
\end{claim}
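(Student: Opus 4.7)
The approach is to compute the coefficient vector $(c_\gamma)_\gamma$ of an arbitrary element of $\Sigma$ directly via Möbius inversion, and then read off that it is a convergent $\ZZ$-linear combination of the declared generators of $S$.

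First I would take $(s_\xi)_\xi \in \Sigma$ of the form $s_\xi = f(\a_\xi)$ for a well-formed, $(3k,N)$-generic sequence $(\a_i)$. By \eqref{MOD:eq:001}, $s_\xi = \sum_{\delta \subset \a_\xi} a_\delta$. For each finite $\delta$ introduce the \emph{trace} $S(\delta) := \{i \ : \ \delta \cap \a_i \neq \emptyset\}$; since the $\a_i$ are pairwise disjoint, $\abs{S(\delta)} \leq \abs{\delta}$, and $\delta \subset \a_{\gamma'}$ if and only if $S(\delta) \subset \gamma'$. Möbius inversion on the Boolean lattice of subsets of $\gamma$, combined with the elementary identity $\sum_{A \subset \gamma' \subset \gamma} (-1)^{\abs{\gamma'}} = 0$ unless $A = \gamma$, will give
$$
	c_\gamma \;=\; \sum_{\gamma' \subset \gamma} (-1)^{\abs{\gamma}-\abs{\gamma'}} s_{\gamma'}
	\;=\; \sum_{\delta \,:\, S(\delta) = \gamma} a_\delta.
$$

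Next I would use stability of $f$, which forces $a_\delta = 0$ unless $\abs{\delta} \leq d$ and $\diam(\delta) \leq k$, to restrict the summation; in particular $\abs{\gamma} \leq \abs{\delta} \leq d$. The key geometric input is the well-formedness condition $\min \a_{i+l+1} > \max \a_i + k$: if $i, j \in S(\delta)$ with $j - i > l$, then $\diam(\delta) \geq \min \a_j - \max \a_i > k$, a contradiction. Hence $c_\gamma = 0$ unless $\diam(\gamma) \leq l$. Assembling across all $\gamma$,
$$
	(c_\gamma)_\gamma \;=\; \sum_\delta a_\delta \, \vec e_{S(\delta)},
$$
with $\delta$ ranging over subsets of $\bigcup_i \a_i$ satisfying $\abs{\delta} \leq d$ and $\diam(\delta) \leq k$. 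Each summand is precisely a generator $a_\gamma \vec e_\kappa$ of $S$ with $\gamma = \delta$, $\kappa = S(\delta)$, satisfying $d \geq \abs{\gamma} \geq \abs{\kappa}$, $\diam(\gamma) \leq k$, $\diam(\kappa) \leq l$. Only finitely many $\delta$ affect any given coordinate $\gamma$, so the series converges in the product topology on $\TT^{\cFe}$; the partial sums lie in the integer span of the generators, so the full sum lies in the closed span $S$. Consequently $(c_\gamma)_\gamma \in S$, and taking closures gives $A \subset S$.

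The case of $B$ will run in parallel: given $t \in \Delta_0$ realised as $t_\xi = f(\a'_\xi) - f(\a_\xi)$ with both $(\a_i)$ and $(\a'_i)$ well-formed, the same computation produces $b_\gamma = \sum_{\delta : S'(\delta)=\gamma} a_\delta - \sum_{\delta : S(\delta)=\gamma} a_\delta$, where $S, S'$ are the trace maps for $\a, \a'$; this writes $(b_\gamma)_\gamma$ as a difference of two vectors of the form just treated, hence in $S$. The only point to handle with care is the diameter bookkeeping — verifying that well-formedness is exactly the ingredient that converts the bound $\diam(\delta) \leq k$ coming from stability of $f$ into the bound $\diam(S(\delta)) \leq l$ demanded by the generators of $S$. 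Beyond that, the argument is pure Möbius inversion combined with taking a closure.
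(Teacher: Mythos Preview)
Your proof is correct and follows essentially the same route as the paper: both compute the coefficient $c_\kappa$ as the sum of $a_\delta$ over those $\delta$ whose ``trace'' on the $\a_i$ equals $\kappa$, then use stability of $f$ for the size/diameter bounds on $\delta$ and well-formedness (specifically $\min \a_{i+l+1} > \max \a_i + k$) to force $\diam(\kappa)\leq l$. Your M\"obius-inversion/trace language makes explicit what the paper leaves implicit. The one genuine difference is the treatment of $B$: the paper dispatches it in one line via the group-theoretic observation that $A+B=A$ and $A\subset S$ with $S$ a group force $B\subset A-A\subset S$, whereas you rerun the computation for $f(\a'_\xi)-f(\a_\xi)$. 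Both are fine; the paper's shortcut is slightly cleaner.
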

\begin{proof}
	Fix some well-formed $(\a_i)_i$, and write $f(\a_\xi)$ as $f(\a_\xi) = \sum_{\kappa \subset \xi}{c_\kappa}$. One can check that this formula holds with $c_\kappa$ given by 
	$$
		c_\kappa = \sum_{\substack{ \gamma \subset \a_\kappa \\ \gamma \not \subseteq \a_{\lambda} \text{ for } \lambda \subsetneq \kappa} } a_\gamma.
	$$ 
	The above sum only contains elements $a_\gamma$ with $\abs{\gamma} \geq \abs{\kappa}$, and we may eliminate the terms with $\abs{\gamma} > d$ because these vanish. Finally, if $\diam(\kappa) > l$ then (because $(\a_i)$ is well-formed), we have $\diam(\gamma) > k$ for all $\gamma$ in the above sum, and hence the corresponding $a_\gamma$ vanish. 
	
	It follows that $(c_\xi)_{\xi} \in S$. Since $S$ is closed, also $A \subset S$. Finally, $S$ is a group, so $A+B = A$, $A \subset S$ implies $B \subset S$.
\end{proof}

We next bootstrap Claim \ref{MOD:claim:1} to a more precise statement.

\begin{claim}\label{MOD:claim:2}
In the situation of Claim \ref{MOD:claim:1}, there exists $b = (b_\xi)_\xi \in B$ such that
\begin{enumerate}
\item\label{item:1@claim:2@thm:model}
	$b_\beta = 0$ unless $\b = \kappa$,
\item\label{item:2@claim:2@thm:model}
	$b_\kappa = a_\gamma$.
\end{enumerate}
\end{claim}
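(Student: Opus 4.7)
The plan is to prove Claim \ref{MOD:claim:2} by downward induction on $\abs{\gamma}$, starting from $\abs{\gamma} = d$ and descending to $\abs{\gamma} = \abs{\kappa}$. The base case will follow essentially from Claim \ref{MOD:claim:1} alone; in the inductive step I will use Claim \ref{MOD:claim:1} to produce an initial $b^{(0)} \in B$ whose error terms all involve $a_\delta$'s with $\abs{\delta} > \abs{\gamma}$, and then use the inductive hypothesis to cancel these errors.

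For the base case $\abs{\gamma} = d$, Claim \ref{MOD:claim:1} directly yields the desired $b$. Since $a_\delta = 0$ whenever $\abs{\delta} > d$, the lattice $\sum_{\delta \supsetneq \gamma} \ZZ a_\delta$ is trivial, so the $t \in \Delta$ from Claim \ref{MOD:claim:1} satisfies $t_\kappa = a_\gamma$ exactly. Moreover, the explicit formula $t_\beta = \sum_{(\gamma+n) \subseteq \delta \subseteq \a_\beta \cup (\gamma+n)} a_\delta$ from the proof of Claim \ref{MOD:claim:1} collapses (for $\beta \supseteq \kappa$) to the single summand $\delta = \gamma+n$, so $t_\beta$ is constantly $a_\gamma$ on $\{\beta \supseteq \kappa\}$ and vanishes elsewhere. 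Möbius inversion then immediately gives $b_\kappa = a_\gamma$ and $b_\mu = 0$ for $\mu \neq \kappa$.

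For the inductive step with $\abs{\gamma} < d$, I apply Claim \ref{MOD:claim:1} to $(\kappa,\gamma)$ to obtain $t^{(0)} \in \Delta$ and compute the associated $b^{(0)} \in B$ by inclusion--exclusion. Writing $\delta = (\gamma+n) \cup \delta'$ with $\delta' \subseteq \a_\nu$ (the union is disjoint since $\a_\nu \cap (\gamma+n) = \emptyset$ by construction) and swapping the order of summation yields $b^{(0)}_\mu = 0$ for $\mu \not\supseteq \kappa$, and for $\mu \supseteq \kappa$,
\[
b^{(0)}_\mu = \sum_{\substack{\delta' \subseteq \a_\mu \\ \kappa \cup \lambda(\delta') = \mu}} a_{(\gamma+n) \cup \delta'}, \qquad \lambda(\delta') := \bbra{i : \delta' \cap \a_i \neq \emptyset}.
\]
In particular $b^{(0)}_\kappa = a_\gamma + r_\kappa$ and $b^{(0)}_\mu = r_\mu$ for $\mu \supsetneq \kappa$, where each $r_\mu$ is an integer combination of elements $a_\delta$ with $\abs{\delta} > \abs{\gamma}$.

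It remains to cancel the error terms $r_\mu$ using the inductive hypothesis. For every $(\mu,\delta)$ arising above one must check $d \geq \abs{\delta} \geq \abs{\mu}$, $\diam(\delta) \leq k$, and $\diam(\mu) \leq l$: the first two conditions are automatic from $a_\delta \neq 0$, and $\abs{\delta} \geq \abs{\mu}$ follows from $\abs{\delta'} \geq \abs{\mu \setminus \kappa}$ combined with $\abs{\gamma} \geq \abs{\kappa}$. The one delicate point — and the main obstacle of the argument — is $\diam(\mu) \leq l$, and this is precisely what the careful design of the pattern $\pi$ in the proof of Claim \ref{MOD:claim:1} is for: $\pi$ is supported on indices in $[\min\kappa,\min\kappa+l]$, which forces $\lambda(\delta') \subseteq [\min\kappa,\min\kappa+l]$ and hence $\mu \subseteq [\min\kappa,\min\kappa+l]$. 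Once this verification is in place, the inductive hypothesis supplies $a_\delta \vec e_\mu \in B$ for every such pair, and since $B$ is a closed group, subtracting appropriate integer combinations of these elements from $b^{(0)}$ kills every $b^{(0)}_\mu$ with $\mu \neq \kappa$ together with the tail $r_\kappa$, leaving the desired $a_\gamma \vec e_\kappa \in B$.
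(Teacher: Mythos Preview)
Your proof is correct and takes a genuinely different route from the paper. The paper proceeds in two stages: first it iterates Claim~\ref{MOD:claim:1} (still with the same $\kappa$, varying $\gamma$) to obtain $t\in\Delta$ with $t_\kappa=a_\gamma$ \emph{exactly}, so that the associated $b\in B$ has $b_\kappa=a_\gamma$ and $b_\mu=0$ for $\mu\not\supseteq\kappa$; then it invokes Claim~\ref{MOD:claim:3} together with a Gaussian-elimination and compactness argument to kill the remaining coordinates $b_\mu$ with $\mu\supsetneq\kappa$. You instead run a single downward induction on $\abs{\gamma}$, compute $b^{(0)}$ explicitly by M\"obius inversion, and cancel each error $a_\delta\vec e_\mu$ directly using the inductive hypothesis. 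Your approach is more explicit: since every error term is a \emph{finite} integer combination of $a_\delta$'s and only finitely many $\mu\subseteq[\min\kappa,\min\kappa+l]$ arise, no compactness is needed.

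One small point to tighten. Your verification that $\diam(\mu)\leq l$ relies on $\delta'\subseteq[n+1,n+3k]$, which in turn needs $\min\delta\geq n+1$. With the normalisation $\gamma\subseteq[k,2k]$ used in the proof of Claim~\ref{MOD:claim:1}, one only gets $\min\delta\geq\max(\gamma+n)-k\geq n$, so the boundary point $n$ (outside the pattern window) might sneak into $\delta'$. This is harmless: either shift the normalisation to $\gamma\subseteq[k+1,2k+1]$, or---more in the spirit of the paper---simply cite Claim~\ref{MOD:claim:3}, which gives $b^{(0)}_\mu=0$ whenever $\diam(\mu)>l$ for free since $b^{(0)}\in B\subseteq S$.
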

\begin{proof}
\renewcommand{\k}{\kappa}
\renewcommand{\l}{\lambda}

	It follows from Claim \ref{MOD:claim:1} that we may find for any $\gamma, \kappa$ with $\abs{\gamma} \geq \abs{\kappa}$ and $\diam(\kappa) \leq l$ a vector $(t_\xi)_\xi \in \Delta$ such that $$t_\kappa = a_\gamma + \sum_{\delta \supsetneq \gamma} k_\delta a_\delta$$ for some integers $k_\delta$. Applying  this inductively with $\delta$ in place of $\gamma$ and using that $a_\delta = 0$ for sufficiently large $\delta$, we may eliminate all the summands $a_\delta$ above, so that $t_\kappa = a_\gamma$. 	Writing out $t_\kappa$ in the coordinates, we thus find $b = (b_\xi)_\xi \in B$ such that $b_\b = 0$ unless $\b \supseteq \kappa$, and $b_\kappa = a_\gamma$. 
	
	Using a form of Gaussian elimination together with Claim \ref{MOD:claim:3}, we may now produce for any $\e > 0$ an element $b \in B$ such that $b_\kappa = a_\gamma$ and $\norm{ b_\b } < \e$ if $\b \not \supset \kappa$ or $\beta \in \{\k_1,\k_2,\dots,\k_r\}$, for any finite list $\{\k_j\}_{j=1}^r$. Finally, using the compactness of $B$, we may use the above procedure to produce  $b \in B$ such that $b_\kappa = a_\gamma$ and  $b_\b = 0$ if $\b \neq \k$. 
\end{proof}

\subsection{Final step}\label{sec:MOD:final} We are now in position to finish the proof of Proposition \ref{prop:main-abelian}. Combining Claims \ref{MOD:claim:2} and \ref{MOD:claim:3}, we see that $A = B$, and hence $\Sigma = \Delta$. But this means that $\Sigma$ contains the constant sequence $\b \mapsto 0$. Hence, there is some $(\a_i)$ such that $\b \mapsto \a_\b$ maps $\cS_l$ to $\cS_k$ (which additionally happens to be well-formed and generic) so that $\beta \mapsto f(\a_\beta)$ is as close to the constant sequence $\beta \mapsto 0$ (in the product topology of $\TT^{\cFe}$) as we wish. Thus, we can extract an asymptotic $\cS_l$-subsequence of $f$ which is identically $0$, which was our goal.

\subsection{Proof of Theorem \ref{thm:A}}\label{sec:MOD:Thm-A} Having proved Proposition \ref{prop:main-abelian} (and hence Theorem \ref{thm:B}) we discuss the proof of Theorem \ref{thm:A}. We are in the same situation as in Proposition \ref{prop:main-abelian}, with the exception that we have $k = d$, and we need to take $l = 0$.

We apply the same argument as above, with some simplifications. A pattern is now (again) just a single set $\pi \in \cS_k$. The appropriate version of Claim \ref{claim:generic-seqs-exist} is easily proved. The key difference in Claim \ref{MOD:claim:1} is that we can now prove it with $k = d$ and $l = 0$ (we may simply put $\pi = [3k] \setminus \gamma$, with notation therein). Neither Claim \ref{MOD:claim:3}, not the remainder of the argument ever use the relation between $k,l$ and $d$, so the reasoning carries through.

\bibliographystyle{alpha}
\bibliography{bibliography}

\begin{thebibliography}{GMV16b}

\bibitem[AGH63]{AuslanderGreenHahn1963}
L.~Auslander, L.~Green, and F.~Hahn.
\newblock {\em Flows on homogeneous spaces}.
\newblock With the assistance of L. Markus and W. Massey, and an appendix by L.
  Greenberg. Annals of Mathematics Studies, No. 53. Princeton University Press,
  Princeton, N.J., 1963.

\bibitem[Ber10]{Bergelson2010d}
Vitaly Bergelson.
\newblock Ultrafilters, {IP} sets, dynamics, and combinatorial number theory.
\newblock In {\em Ultrafilters across mathematics}, volume 530 of {\em Contemp.
  Math.}, pages 23--47. Amer. Math. Soc., Providence, RI, 2010.

\bibitem[BFM96]{BergelsonFurstenbergMcCutcheon1996}
Vitaly Bergelson, Hillel Furstenberg, and Randall McCutcheon.
\newblock I{P}-sets and polynomial recurrence.
\newblock {\em Ergodic Theory Dynam. Systems}, 16(5):963--974, 1996.

\bibitem[BFW06]{BergelsonFurstenbergWeiss2006}
Vitaly Bergelson, Hillel Furstenberg, and Benjamin Weiss.
\newblock Piecewise-{B}ohr sets of integers and combinatorial number theory.
\newblock In {\em Topics in discrete mathematics}, volume~26 of {\em Algorithms
  Combin.}, pages 13--37. Springer, Berlin, 2006.

\bibitem[BL03]{BergelsonLeibman-2003}
V.~Bergelson and A.~Leibman.
\newblock Topological multiple recurrence for polynomial configurations in
  nilpotent groups.
\newblock {\em Adv. Math.}, 175(2):271--296, 2003.

\bibitem[BL07]{BergelsonLeibman2007}
Vitaly Bergelson and Alexander Leibman.
\newblock Distribution of values of bounded generalized polynomials.
\newblock {\em Acta Math.}, 198(2):155--230, 2007.

\bibitem[BL16]{BergelsonLeibman2016}
Vitaly Bergelson and Alexander Leibman.
\newblock {$\mathrm{IP_r^*}$ recurrence and nilsystems, preprint}, 2016.

\bibitem[CS10]{SzegedyCamarena2010}
Omar~Antolin Camarena and Balazs Szegedy.
\newblock {Nilspaces, nilmanifolds and their morphisms}.
\newblock September 2010.

\bibitem[Fur81]{Furstenberg1981}
H.~Furstenberg.
\newblock {\em Recurrence in ergodic theory and combinatorial number theory}.
\newblock Princeton University Press, Princeton, N.J., 1981.
\newblock M. B. Porter Lectures.

\bibitem[FW78]{FurstenbergWeiss1978}
H.~Furstenberg and B.~Weiss.
\newblock Topological dynamics and combinatorial number theory.
\newblock {\em J. Analyse Math.}, 34:61--85 (1979), 1978.

\bibitem[GMV16a]{GutmanMannersVarju-1}
Yonatan Gutman, Freddie Manners, and P{\'e}ter~P. Varj{\'u}.
\newblock {The structure theory of Nilspaces, I}.
\newblock 2016.
\newblock (Preprint).

\bibitem[GMV16b]{GutmanMannersVarju-2}
Yonatan Gutman, Freddie Manners, and P{\'e}ter~P. Varj{\'u}.
\newblock {The structure theory of Nilspaces, II: Representaion as
  nilmanifolds}.
\newblock 2016.
\newblock (Preprint).

\bibitem[GMV16c]{GutmanMannersVarju-3}
Yonatan Gutman, Freddie Manners, and P{\'e}ter~P. Varj{\'u}.
\newblock {The structure theory of Nilspaces, III: Inverse limit
  representations and topological dynamics}.
\newblock 2016.
\newblock (Preprint).

\bibitem[Gre16]{Green-book}
Ben Green.
\newblock {\em {Higher-Order Fourier Analysis, I}}.
\newblock Oxford, 2016.
\newblock (Notes available from the author).

\bibitem[GT10]{GreenTao2010}
Ben Green and Terence Tao.
\newblock Linear equations in primes.
\newblock {\em Ann. of Math. (2)}, 171(3):1753--1850, 2010.

\bibitem[GT12]{GreenTao2012}
Ben Green and Terence Tao.
\newblock The quantitative behaviour of polynomial orbits on nilmanifolds.
\newblock {\em Ann. of Math. (2)}, 175(2):465--540, 2012.

\bibitem[GTZ12]{GreenTaoZiegler-2012}
Ben Green, Terence Tao, and Tamar Ziegler.
\newblock An inverse theorem for the {G}owers {$U\sp {s+1}[N]$}-norm.
\newblock {\em Ann. of Math. (2)}, 176(2):1231--1372, 2012.

\bibitem[Hin74]{Hindman1974}
Neil Hindman.
\newblock Finite sums from sequences within cells of a partition of {$N$}.
\newblock {\em J. Combinatorial Theory Ser. A}, 17:1--11, 1974.

\bibitem[HK05]{HostKra2005}
Bernard Host and Bryna Kra.
\newblock Nonconventional ergodic averages and nilmanifolds.
\newblock {\em Ann. of Math. (2)}, 161(1):397--488, 2005.

\bibitem[HK11]{HostKra2009}
Bernard Host and Bryna Kra.
\newblock Nil-{B}ohr sets of integers.
\newblock {\em Ergodic Theory Dynam. Systems}, 31(1):113--142, 2011.

\bibitem[HS12]{HindmanStrauss-book}
Neil Hindman and Dona Strauss.
\newblock {\em Algebra in the {S}tone-\v {C}ech compactification}.
\newblock de Gruyter Textbook. Walter de Gruyter \& Co., Berlin, second
  edition, 2012.

\bibitem[HSY16]{HuangShaoYe2014}
Wen Huang, Song Shao, and Xiangdong Ye.
\newblock Nil {B}ohr-sets and almost automorphy of higher order.
\newblock {\em Mem. Amer. Math. Soc.}, 241(1143):v+83, 2016.

\bibitem[Key66]{Keynes1966}
Harvey~B. Keynes.
\newblock Topological dynamics in coset transformation groups.
\newblock {\em Bull. Amer. Math. Soc.}, 72:1033--1035, 1966.

\bibitem[Key67]{Keynes1967}
Harvey~B. Keynes.
\newblock A study of the proximal relation in coset transformation groups.
\newblock {\em Trans. Amer. Math. Soc.}, 128:389--402, 1967.

\bibitem[Laz54]{Lazard-1954}
Michel Lazard.
\newblock Sur les groupes nilpotents et les anneaux de {L}ie.
\newblock {\em Ann. Sci. Ecole Norm. Sup. (3)}, 71:101--190, 1954.

\bibitem[Lei98]{Leibman1998}
A.~Leibman.
\newblock Polynomial sequences in groups.
\newblock {\em J. Algebra}, 201(1):189--206, 1998.

\bibitem[Lei02]{Leibman2002}
A.~Leibman.
\newblock Polynomial mappings of groups.
\newblock {\em Israel J. Math.}, 129:29--60, 2002.

\bibitem[Lei05a]{Leibman-2005b}
A.~Leibman.
\newblock Pointwise convergence of ergodic averages for polynomial actions of
  {${\Bbb Z}\sp d$} by translations on a nilmanifold.
\newblock {\em Ergodic Theory Dynam. Systems}, 25(1):215--225, 2005.

\bibitem[Lei05b]{Leibman2005}
A.~Leibman.
\newblock Pointwise convergence of ergodic averages for polynomial sequences of
  translations on a nilmanifold.
\newblock {\em Ergodic Theory Dynam. Systems}, 25(1):201--213, 2005.

\bibitem[Mal51]{Malcev1951}
A.~I. Malcev.
\newblock On a class of homogeneous spaces.
\newblock {\em Amer. Math. Soc. Translation}, 1951(39):33, 1951.

\bibitem[McC99]{McCutcheon1999}
Randall McCutcheon.
\newblock An infinitary polynomial van der {W}aerden theorem.
\newblock {\em J. Combin. Theory Ser. A}, 86(2):214--231, 1999.

\bibitem[Tao12]{Tao-book}
Terence Tao.
\newblock {\em Higher order {F}ourier analysis}, volume 142 of {\em Graduate
  Studies in Mathematics}.
\newblock American Mathematical Society, Providence, RI, 2012.

\bibitem[Tu14]{Siming}
Siming Tu.
\newblock {${\rm Nil \,Bohr}_0$}-sets and polynomial recurrence.
\newblock {\em J. Math. Anal. Appl.}, 409(2):890--898, 2014.

\bibitem[TV10]{TaoVu}
Terence Tao and Van~H. Vu.
\newblock {\em Additive combinatorics}, volume 105 of {\em Cambridge Studies in
  Advanced Mathematics}.
\newblock Cambridge University Press, Cambridge, 2010.
\newblock Paperback edition [of MR2289012].

\bibitem[ZK13]{Zorin-Kranich2013}
Pavel Zorin-Kranich.
\newblock {\em {Ergodic theorems for polynomials in nilpotent groups}}.
\newblock PhD thesis, Universiteit van Amsterdam, September 2013.

\bibitem[ZK14]{Zorin-Kranich2012}
Pavel Zorin-Kranich.
\newblock A nilpotent {IP} polynomial multiple recurrence theorem.
\newblock {\em J. Anal. Math.}, 123:183--225, 2014.

\end{thebibliography}

\end{document}